\documentclass[11pt]{article}

\usepackage[utf8]{inputenc}
\pdfoutput=1
\usepackage[margin=1.1in,letterpaper]{geometry}
\usepackage[round]{natbib}
\usepackage{tikz}

\linespread{1.025}              

\makeatletter
\newlength\aftertitskip     \newlength\beforetitskip
\newlength\interauthorskip  \newlength\aftermaketitskip

\setlength\aftertitskip{0.1in plus 0.2in minus 0.2in}
\setlength\beforetitskip{0.05in plus 0.08in minus 0.08in}
\setlength\interauthorskip{0.08in plus 0.1in minus 0.1in}
\setlength\aftermaketitskip{0.3in plus 0.1in minus 0.1in}

\def\maketitle{\par
 \begingroup
   \def\thefootnote{\fnsymbol{footnote}}
   \def\@makefnmark{\hbox to 4pt{$^{\@thefnmark}$\hss}}
   \@maketitle \@thanks
 \endgroup
\setcounter{footnote}{0}
 \let\maketitle\relax \let\@maketitle\relax
 \gdef\@thanks{}\gdef\@author{}\gdef\@title{}\let\thanks\relax}

\def\@startauthor{\noindent \normalsize\bf}
\def\@endauthor{}
\def\@starteditor{\noindent \small {\bf Editor:~}}
\def\@endeditor{\normalsize}
\def\@maketitle{\vbox{\hsize\textwidth
 \linewidth\hsize \vskip \beforetitskip
 {\begin{center} \LARGE\@title \par \end{center}} \vskip \aftertitskip
 {\def\and{\unskip\enspace{\rm and}\enspace}%
  \def\addr{\small\it}%
  \def\email{\hfill\small\tt}%
  \def\name{\normalsize\bf}%
  \def\AND{\@endauthor\rm\hss \vskip \interauthorskip \@startauthor}
  \@startauthor \@author \@endauthor}
}}

\makeatother

\pdfoutput=1                    

\usepackage{amsmath,amsthm}
\usepackage{graphicx}
\usepackage{color}
\usepackage{amssymb,amsfonts,amsxtra,mathrsfs,bm}
\usepackage{multicol}
\usepackage{multirow}
\usepackage{paralist}
\usepackage{xspace}
\usepackage{algorithm}%
\usepackage{algpseudocode}%
\usepackage[colorlinks=true, urlcolor = blue, citecolor=blue]{hyperref}
\usepackage{bbm}
\usepackage{nicefrac}


\newcommand{\reals}{\mathbb{R}}

\def\defas{\stackrel{\text{def}}{=}}
\newcommand{\tr}{\text{tr}}
\newcommand{\diag}{\textbf{diag}}

\newcommand{\SG}{\mathcal{SG}(\mathbb{P}_d)}

%

%
\DeclareMathOperator*{\argmin}{argmin}



\newtheorem{theorem}{Theorem}[section]
\newtheorem{lem}[theorem]{Lemma}
\newtheorem{prop}[theorem]{Proposition}
\newtheorem{cor}[theorem]{Corollary}

\theoremstyle{definition}
\newtheorem{defn}[theorem]{Definition}

\newtheorem{rmk}[theorem]{Remark}
\newtheorem{example}[theorem]{Example}

\usepackage{booktabs} 
\usepackage{makecell} 

\numberwithin{equation}{section}

\newcommand{\nat}{\mathbb{N}}

\newcommand{\pd}{\mathbb{P}_d}

\newcommand{\Xc}{\mathcal{X}}
\newcommand{\Mc}{\mathcal{M}}

\newcommand{\R}{\mathcal{R}}




\title{Structured Regularization for Constrained Optimization \\ on the SPD Manifold}
\author{\name Andrew Cheng \email{andrewcheng@g.harvard.edu}\\
\name Melanie Weber \email{mweber@seas.harvard.edu}\\
  \addr{Harvard University}
}

\begin{document}
\maketitle

\begin{abstract}
    Matrix-valued optimization tasks, including those involving symmetric positive definite (SPD) matrices, arise in a wide range of applications in machine learning, data science and statistics. Classically, such problems are solved via constrained Euclidean optimization, where the domain is viewed as a Euclidean space and the structure of the matrices (e.g., positive definiteness) enters as constraints. More recently, geometric approaches that leverage parametrizations of the problem as unconstrained tasks on the corresponding matrix manifold have been proposed. While they exhibit algorithmic benefits in many settings, they cannot directly handle additional constraints, such as inequality or sparsity constraints. A remedy comes in the form of constrained Riemannian optimization methods, notably, Riemannian Frank-Wolfe and Projected Gradient Descent. However, both algorithms require potentially expensive subroutines that can introduce computational bottlenecks in practise.  To mitigate these shortcomings, we introduce a class of structured regularizers, based on symmetric gauge functions, which allow for solving  constrained optimization on the SPD manifold with faster unconstrained methods. We show that our structured regularizers can be chosen to preserve or induce desirable structure, in particular convexity and ``difference of convex'' structure. We demonstrate the effectiveness of our approach in numerical experiments.
\end{abstract}

\section{Introduction}\label{sec1}
We study constrained optimization problems of the form
\begin{equation}\label{eq:prob}
	\min_{x \in \Xc \subset \pd} \phi(x) \; ,
\end{equation}
where $\phi: \pd \rightarrow \R$ is a smooth function defined on the 
symmetric, positive definite matrices $\pd$ and $\Xc \subset \pd$ a subset defined by geometric constraints. 
Problems of this form arise in many settings, including the computation of Tyler's M-estimators~\citep{tyler1987distribution,m-scatter,wiesel2012geodesic}, robust subspace recovery~\citep{zhang2016robust}, barycenters problems~\citep{sra2013sdivergence}, matrix-square roots~\citep{sra2015matrixsquareroot}, the computation of Brascamp-Lieb constants~\citep{weber2022computing}, and learning determinantal point processes~\citep{mariet2015fixed}, among others.

Classical approaches for this class of problems include constrained Euclidean optimization, where the domain in problem~\ref{eq:prob} is viewed as a Euclidean space and the geometric structure of the problem enters as constraints.  However, it is often beneficial to encode the positive definiteness constraint explicitly in the parametrization of the domain by solving problem~\ref{eq:prob} as a constrained problem on the manifold of symmetric positive definite matrices (SPD manifold). For instance, if the objective $\phi$ is geodesically convex with respect to the Riemannian metric, then one can provide global optimality certificates for first-order methods in the Riemannian setting. Consequently, several constrained Riemannian optimization methods have been proposed, including variants of Riemannian Projected Gradient Descent (R-PGD)~\citep{liu2019simple} and projection-free Riemannian Frank-Wolfe (R-FW) methods~\citep{frank-wolfe,weber2021projection}. 
However, several shortcomings arise, which limit the applicability of those methods in practise.  First, both R-PGD and R-FW rely on subroutines for implicitly imposing constraints, which can be costly in the geometric setting.  Second,  the geometric tools needed to implement Riemannian optimization methods, including Riemannian gradients, exponential maps, and parallel transport operators, often introduce significant computational overhead compared to their Euclidean counterparts.  To mitigate both limitations, we will apply a \emph{mixed Euclidean-Riemannian perspective}, which leverages the computational efficiency of Euclidean methods while admitting global optimality certificates thanks to geodesic convexity.

We propose a class of regularizers based on symmetric gauge functions, which allows for relaxing several types of constraints that frequently arise in geometric optimization.  We show that this \emph{structured regularization} of constrained optimization tasks can preserve desirable properties, such as geodesic convexity and difference of convex (DC) structure.  Moreover, in some settings,  DC structure may be induced by a suitable regularization. Optimization tasks with DC objectives can be solved using \emph{Convex-Concave Procedures} (short: \emph{CCCP}),  a class of Euclidean solvers that can often numerically outperform classical first-order methods in practise~\citep{sra2013sdivergence,wiesel2012geodesic}.  In settings where the DC objectives is geodesically convex, we can leverage a Riemannian analysis to obtain global optimality certificates~\citep{pmlr-v202-weber23a}.  We will show that this lens applies readily to our regularized objectives, allowing us to leverage CCCP with global optimality guarantees in the constrained setting. To the best of our knowledge, this represents the first application of CCCP to constrained, geodesically convex programs. Importantly, our structured regularizers are highly modular, which simplifies the design of new regularizers for a variety of programs. 

We present a convergence analysis of CCCP applied to our regularized objectives and discuss the computational complexity of CCCP, specifically with regards to the complexity of a crucial subroutine, the \emph{CCCP oracle}, which lies at the heart of the algorithm. We propose effective solvers for this subroutine for several classical SPD optimization tasks.  
Our results allow for recovering and reinterpreting previously known specialized fixed point approaches for related problems in our structured regularization framework. 
We corroborate our theoretical finding with numerical experiments, which illustrate the competitive performance of the proposed methods.

\subsection{Related Works}

\paragraph{Riemannian optimization} Optimization on geometric domains has received significant attention in the machine learning and statistics communities motivated by a wide range of applications that involve structured data and models. This has led to the generalizations of many classical Euclidean algorithms to the Riemannian setting, including for geodesically convex~\citep{udriste1994convex,Bacak+2014,zhang2016first} and nonconvex~\citep{boumal2019global} problems, as well as constrained~\citep{frank-wolfe,liu2019simple} and stochastic~\citep{bonnabel2013stochastic,zhang2016first,Riemannian_SVRG,weber2021projection} settings. Constrained Riemannian optimization has largely focused on projection-based~\citep{liu2019simple} and projection-free~\citep{frank-wolfe} first-order methods, which ensure the feasibility of the iterates via subroutines that can be expensive in practise. To the best of our knowledge, handling constraints via regularizers that are explicitly designed to preserve desirable structure in the objective (geodesic convexity, DC structure) has not been studied systematically in the prior Riemannian optimization literature.

\paragraph{Regularization in SPD optimization} 
Regularization techniques have been studied for many problems on the manifold of positive definite matrices. Notable examples include covariance estimation~\citep{bien,structuredcovestimation_wiesel}, where the regularizer enforces sparsity constraints and 
optimistic likelihood estimation, where side information is leveraged for regularization~\citep{Nguyen2019CalculatingOL}. Usually, the regularizer is designed with a specific task in mind. In contrast, here we aim to design a general class of regularizers and investigate its properties for preserving desirable structure in the objective. 

\paragraph{DC optimization}
The optimization of DC function has been extensively studied in the Euclidean optimization literature. The CCCP algorithm has emerged as a popular solver for problems of this structure. More recently, geometric optimization problems with DC structure have received interest, including differences of g-convex functions~\citep{cruzneto,souza2015proximal,ferreira2021difference} and differences of Euclidean convex functions that are g-convex~\citep{pmlr-v202-weber23a}. While the former rely on Riemannian tools, such as exponential maps and Riemannian gradients, the latter can be implemented using purely Euclidean tools~\citep{pmlr-v202-weber23a}. To the best of our knowledge no extensions of CCCP to constrained geometric problems have been considered in the prior literature.

\subsection{Summary of Contributions}
We briefly summarize the main contributions of our work.
\begin{enumerate}
    \item We introduce a class of structured regularizers for constrained optimization on the manifold of symmetric positive definite matrices. Our regularizers are based on symmetric gauge functions, whose inherent algebraic properties provide the regularizers with a modular structure. This allows for the design of custom regularizers for a range of constrained problems.
    \item We show that structured regularizers can be chosen to preserve or induce desirable structure in the objective, in particular difference of convex structure and geodesic convexity. The former allows for leveraging a simple CCCP approach to find solutions, and the latter to guarantee their global optimality.
    \item To the best of our knowledge, we provide the first analysis of Euclidean CCCP applied to constrained geodesically convex programs. This approach has notable  computational benefits compared to existing constrained Riemannian optimization approaches.
    \item We illustrate the utility of our approach in several sets of numerical experiments, highlighting the computational efficiency and numerical stability of the proposed CCCP methods in applications.
\end{enumerate}

\section{Background}

\subsection{Riemannian Geometry of $\pd$}\label{sec:background}
Throughout this paper we consider the set of real symmetric square matrices with strictly positive eigenvalues, denoted by 
\[
\pd \defas \{ X \in \reals^{d\times d}: X \succ 0 \}.
\]
 A \textit{manifold} $\mathcal{M}$ is a topological space that is locally Euclidean with a tangent space $\mathcal{T}_x \mathcal{M}$ associated to each point $x \in \mathcal{X}$. If $\mathcal{M}$ is \textit{smooth} and has a smoothly varying inner product $\langle u, v \rangle_x$ defined on $\mathcal{T}_x \mathcal{M}$ for $x \in \mathcal{M}$ then it is a \textit{Riemannian manifold}. 

 Here, we focus on the algorithmic benefits of viewing $\pd$ under both the \textit{affine-invariant Riemannian} geometry and the \textit{Euclidean} geometry. When endowed with the \textit{affine-invariant} inner product 
 \[\langle A, B\rangle_X=\operatorname{tr}\left(X^{-1} A X^{-1} B\right) \quad X \in \mathbb{P}_d, A, B \in T_X\left(\mathbb{P}_d\right)=\mathbb{H}_d \; ,\]
the positive definite matrices form a Riemannian manifold. 
Here, the tangent space $\mathbb{H}_d$ is the space of $d\times d$ real symmetric matrices. Under this geometry, given two points $A, B \in \pd$ there is an explicit parametrization for the \textit{unique geodesic} that interpolates $A$ to $B$ given by 
 \begin{equation}\label{eq:intro_gcvx_def}
    \gamma(t)=A^{1 / 2}\left(A^{-1 / 2} B A^{-1 / 2}\right)^t A^{1 / 2}, \quad 0 \leq t \leq 1.
\end{equation}
The geodesic given in \eqref{eq:intro_gcvx_def} is referred to as the \textit{weighted geometric mean} of $A$ and $B$. The midpoint of the geodesic denoted by $A \sharp B \defas \gamma(1/2)$ is known as the \textit{geometric mean} of $A$ and $B$. Furthermore, the \textit{Riemannian metric} corresponding to this geometry is given by 
\begin{equation}\label{eq:riem-metric}
    \delta_R(A, B)=\left\|\log A^{-1 / 2} B A^{-1 / 2}\right\|_F.
\end{equation}
It is important to remark that the resulting Riemannian manifold $\pd$ is a \textit{Cartan-Hadamard manifold}, i.e., it is \textit{complete}, \textit{simply connected}, and has \textit{non-positive curvature}.  The Cartan-Hadamard manifold setting is particularly suitable for geodesically convex analysis and optimization \citep{Bacak+2014}.

The Euclidean geometry of $\pd$ is induced by endowing the symmetric positive definite matrices with the smooth inner product 
\[
\langle A, B \rangle = \tr(A^\top B) \qquad \forall A,B \in \pd \; ,
\]
in which case the corresponding \textit{Euclidean metric} is the Frobenius norm 
\[
d(A,B) = \|A - B\|_F.
\]
In this case, we can view the set $\pd$ as a \textit{convex cone}, i.e., a set closed under conic combinations. This conic perspective lends itself to convex analysis and optimization \citep{Nesterov1994InteriorpointPA}. 

We further provide definitions for several convexity and smoothness notions that will be used throughout the paper.\\

\begin{defn}[Geodesic convexity of sets]
We say that a set $S \subseteq \pd$ is \textit{geodesically convex} (short: g-convex) if for any two points $A,B \in \pd$, the unique geodesic $\gamma:[0,1] \to \pd$ given by \eqref{eq:intro_gcvx_def} lies entirely in $S$, i.e., the image satisfies $\gamma([0,1]) \subseteq S$.    \\
\end{defn}

\begin{defn}[Geodesic convexity of functions]
    We say that $\phi: S \to \reals$ is a \textit{geodesically convex function} if $S \subseteq \pd$ is geodesically convex and $f \circ \gamma :[0,1] \to \reals$ is (Euclidean) convex for each geodesic segment $\gamma :[0,1] \to \pd$ whose image is in $S$ with $\gamma(0) \neq \gamma(1)$.
\end{defn}

 Recall that the Riemannian gradient on $\pd$ can be computed as follows. Let $\bar{\phi}: \mathbb{H}_d \to \reals$ be a function over symmetric matrices with the Euclidean metric from $\reals^{n \times n}$. Denote $\phi$ as the restriction $\phi \defas \bar{\phi}\vert_{\pd}$ to the SPD manifold equipped with the invariant metric. Then the Riemannian gradient of $\phi$ and the Euclidean gradient of $\bar{\phi}$ are related as
$\operatorname{grad}\phi(X) = X \operatorname{grad}\bar{\phi}(X)X$ for all $(X,V) \in \mathcal{T}_X\left(\pd\right)$.\\

\begin{defn}
    If $\phi: \pd \to \reals$ is differentiable then its gradient $\operatorname{grad} \phi(X)$ is defined as the unique vector $V \in \mathcal{T}_X \left(\pd\right)$ with $D\phi(X) [V] = \langle \operatorname{grad}\phi(X), V \rangle_X$.\\
\end{defn}

Although the Euclidean perspective provides fast algorithms with provable convergence guarantees, many classical optimization tasks are nonconvex under the Euclidean metric. However, a large subset of these tasks admit a geodesically convex formulation with respect to the Riemannian metric allowing provable convergence to global optima. Unfortunately, Riemannian optimization techniques can often introduce nontrivial computational overhead stemming from the cost of computing geometric tools, such as geodesics and Riemannian gradients. By adopting a \textit{mixed Euclidean-Riemannian perspective} one can simultaneously reap the computational benefits of the Euclidean perspective and the theoretical guarantees obtained via a Riemannian analysis.

\subsection{Difference of Convex (DC) Optimization}\label{sec:background-cccp}
Optimization tasks on the positive definite matrices frequently exhibit a special structure, where the objective function can be written as a difference of two convex functions. Formally, we consider instances of problem~\ref{eq:prob}, where $\phi(x) = f(x) - h(x)$ with $f(\cdot), h(\cdot)$ Euclidean convex and $h(\cdot)$ smooth. The convexity of $h(\cdot)$ directly implies that
\begin{equation*}
  -h(x) \le -h(y) - \langle \nabla h(y), x-y \rangle \; .
\end{equation*}
We can use this inequality to upper bound the objective, which defines the following surrogate function:
\begin{equation*}
  \phi(x) \le Q(x, y) \defas f(x)-h(y) - \langle \nabla h(y),x-y \rangle \; .
\end{equation*}
The idea of convex-concave procedures (short: CCCP) is to iteratively minimize this surrogate function instead of the original, non-convex objective (Algorithm~\ref{alg:cccp}). Notably, this algorithm is purely Euclidean and does not require the computation of Riemannian tools, such as exponential maps or parallel transport operators.

With a purely Euclidean analysis one can show that this algorithm converges asymptotically to a stationary point of the underlying objective~\citep{lanckriet2009convergence}, but due to non-convexity as non-asymptotic convergence analysis is challenging in the general case. However, if $\phi(\cdot)$ is in addition geodesically convex, then sublinear, global convergence guarantees can be obtained for the (purely Euclidean) CCCP algorithm:

\begin{theorem}[\citep{pmlr-v202-weber23a}]
\label{prop:cccp-conv}
Let $d(x_0,x^*) \leq R$ for some $x_0 \in \Mc$ with $\phi(x) \leq \phi(x_0)$. If the functions $Q(x,x_k)$ in Alg.~\ref{alg:cccp} are first-order surrogate functions, then
\begin{equation}
    \phi(x_k) - \phi(x^*) \leq \frac{2L\alpha_{\Mc}^2(R)}{k+2} \qquad \forall k \geq 1 \; ,
\end{equation}
where $\alpha_{\Mc}$ depends on the geometry of the manifold and $L$ characterizes the smoothness of $h(\cdot)$.
\end{theorem}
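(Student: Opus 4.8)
The plan is to run the standard majorization–minimization (MM) analysis for first-order surrogate functions, but to couple the purely Euclidean descent of Algorithm~\ref{alg:cccp} with the \emph{geodesic} convexity of $\phi$. The starting point is the defining property of a first-order surrogate. Writing $g_k(x) \defas Q(x,x_k)$, the approximation error $r_k \defas g_k - \phi$ is exactly the Bregman divergence of $h$, namely $r_k(x) = h(x) - h(x_k) - \langle \nabla h(x_k),\, x - x_k\rangle$. Convexity of $h$ gives $r_k \geq 0$, so $g_k$ majorizes $\phi$; the tangency conditions $r_k(x_k)=0$ and $\nabla r_k(x_k)=0$ hold by construction; and $L$-smoothness of $h$ yields the quadratic bound $r_k(x) \leq \tfrac{L}{2}\|x - x_k\|_F^2$. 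Combining the majorization with the minimization step $x_{k+1} = \argmin_x g_k(x)$ gives, for every $x$,
\begin{equation*}
\phi(x_{k+1}) \leq g_k(x_{k+1}) \leq g_k(x) = \phi(x) + r_k(x) \leq \phi(x) + \tfrac{L}{2}\|x - x_k\|_F^2 .
\end{equation*}

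Next I would invoke geodesic convexity to turn this into a contraction. Let $\gamma_k:[0,1]\to\pd$ be the affine-invariant geodesic \eqref{eq:intro_gcvx_def} from $x_k$ to $x^*$ and evaluate the displayed inequality at $x = \gamma_k(t)$. Geodesic convexity of $\phi$ gives $\phi(\gamma_k(t)) \leq (1-t)\phi(x_k) + t\,\phi(x^*)$, while the geometric constant $\alpha_{\Mc}(R)$ is chosen precisely to control the Euclidean displacement along the geodesic, $\|\gamma_k(t) - x_k\|_F \leq t\,\alpha_{\Mc}(R)$, uniformly over geodesics with endpoints in the $R$-ball. Substituting and writing $\Delta_k \defas \phi(x_k) - \phi(x^*)$ produces the master recursion
\begin{equation*}
\Delta_{k+1} \leq (1-t)\,\Delta_k + \tfrac{L}{2}\,\alpha_{\Mc}^2(R)\, t^2, \qquad t \in [0,1] .
\end{equation*}

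The rate then follows by solving this recursion with the standard step-size schedule. I would set $t = \tfrac{2}{k+2}$ and prove $\Delta_k \leq \tfrac{2L\alpha_{\Mc}^2(R)}{k+2}$ by induction on $k$: the base case $k=1$ follows by taking $t=1$ in the recursion (which discards the $\Delta_0$ term), and the inductive step collapses to the elementary inequality $(k+1)(k+3) \leq (k+2)^2$, which holds for all $k \geq 0$. Note that the descent property $\phi(x_{k+1}) \leq \phi(x_k) \leq \phi(x_0)$ (immediate from the majorization chain above) keeps every iterate in the sublevel set, so the geometric hypotheses remain valid along the whole trajectory.

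The main obstacle is the geometric distortion estimate $\|\gamma_k(t) - x_k\|_F \leq t\,\alpha_{\Mc}(R)$, which is where the Cartan–Hadamard structure of $\pd$ genuinely enters. In the Euclidean case the minimization path is a straight segment, so the displacement is exactly linear in $t$ and one simply takes $\alpha = R$; here the path is the curved geodesic \eqref{eq:intro_gcvx_def}, and one must bound the chordal Frobenius length of a geodesic segment in terms of its parameter $t$ and its affine-invariant length. This requires controlling the eigenvalue range of the iterates over the bounded sublevel set — guaranteed by $d(x_0,x^*)\leq R$ together with the descent property — and comparing the Frobenius metric with the affine-invariant metric \eqref{eq:riem-metric} on that compact region, which is what yields the explicit dependence of $\alpha_{\Mc}$ on $R$ and on the manifold geometry.
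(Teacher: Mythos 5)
Your proposal is correct and follows essentially the same route as the proof of this result in its source \citep{pmlr-v202-weber23a} (the paper itself only cites the theorem): the Mairal-style first-order-surrogate majorization $\phi(x_{k+1}) \leq \phi(x) + \tfrac{L}{2}\|x-x_k\|_F^2$, evaluation along the geodesic to $x^*$ using g-convexity, the distortion constant $\alpha_{\Mc}(R)$ bounding Euclidean chord length of geodesics over the bounded sublevel set, and the standard $t = \tfrac{2}{k+2}$ induction via $(k+1)(k+3) \leq (k+2)^2$. The details you flag as the genuine geometric content --- the Frobenius-versus-affine-invariant metric comparison on the sublevel set, kept valid by the descent property --- are exactly where that reference's analysis localizes the manifold dependence, so no gaps remain.
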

\begin{algorithm}
\small 
 \caption{Convex-Concave Procedure (CCCP)} 
  \label{alg:cccp}
  \begin{algorithmic}[1]
    \State \textbf{Input:} $x_0 \in \Mc$, K
    \For{$k=0,1,\ldots,K-1$}
        \State Let $Q(x, x_k) = f(x) - h(x_k) - \langle \nabla h(x_k),x-x_k \rangle$.
        \State $x_{k+1} \gets \argmin_{x \in \Mc} Q(x,x_k)$.
    \EndFor
    \State \textbf{Output:} $x_K$
  \end{algorithmic}
\end{algorithm}

We note that the CCCP algorithm cannot explicitly handle constraints in its standard form. The structured regularization techniques introduced in this work will allow for applying CCCP approaches to constrained tasks.

\subsection{Symmetric Gauge Functions and Unitarily Invariant Norms on $\pd$}
In this section we recall classical results on symmetric gauge functions and their algebraic structure. The properties of this class of functions will form the basis for the design of our structured regularizers, introduced in the next section. A more comprehensive overview of symmetric gauge functions can be found in~\citep{LIM2013115} and~\citep[Ch. IV]{bhatia97}.

We begin with a formal definition of \emph{symmetric gauge functions}:

        \begin{defn}[Symmetric Gauge Functions.] A function $\Phi: \reals^d \to \reals_+$ is called a \emph{symmetric gauge function} if 
        \begin{enumerate}
            \item $\Phi$ is a norm.
        \item $\Phi(\sigma_d(x)) = \Phi(x)$ for all $x \in \reals^d$ and all permutation maps $\sigma_n: \reals^d \to \reals^d$. This is known as the \emph{symmetric property}.
        \item $\Phi(\alpha_1 x_1, \ldots, \alpha_d x_d) = \Phi(x_1, \ldots, x_d)$ for all $x \in \reals^d$ and $\alpha_k \in \{\pm 1\}$. This is known as the \emph{gauge invariant} or \emph{absolute property}.
        \end{enumerate}
    \end{defn}

It is easy to see that symmetric gauge functions are closed under positive scaling, which directly follows from the fact that it is a norm.

    \begin{prop}[Closure under positive scaling]
        If $\Phi:\reals^d \to \reals$ is a symmetric gauge function then $\alpha \Phi(x)$ is a symmetric gauge function for all $\alpha > 0$.
    \end{prop}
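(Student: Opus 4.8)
The plan is to verify that $\alpha\Phi$ satisfies all three defining properties of a symmetric gauge function directly from the definition, using the fact that $\Phi$ already does and that $\alpha > 0$. This is a routine verification, so the main work is simply organizing the three checks cleanly.

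First I would establish that $\alpha\Phi$ is a norm. The only nontrivial point here is positive homogeneity combined with positivity. For absolute homogeneity, $(\alpha\Phi)(\lambda x) = \alpha\Phi(\lambda x) = \alpha|\lambda|\Phi(x) = |\lambda|(\alpha\Phi)(x)$, which holds regardless of the sign of the scalar $\lambda$. The triangle inequality follows immediately since scaling by a positive constant preserves it: $(\alpha\Phi)(x+y) = \alpha\Phi(x+y) \le \alpha(\Phi(x) + \Phi(y)) = (\alpha\Phi)(x) + (\alpha\Phi)(y)$. For definiteness, because $\alpha > 0$ we have $\alpha\Phi(x) = 0$ if and only if $\Phi(x) = 0$ if and only if $x = 0$; this is precisely the step where strict positivity of $\alpha$ is needed, since $\alpha = 0$ would collapse the norm to zero everywhere. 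Nonnegativity is inherited since $\alpha > 0$ and $\Phi \ge 0$.

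Next I would check the symmetric property and the gauge-invariant (absolute) property, both of which are inherited trivially. For any permutation map $\sigma_d$, we have $(\alpha\Phi)(\sigma_d(x)) = \alpha\Phi(\sigma_d(x)) = \alpha\Phi(x) = (\alpha\Phi)(x)$, using property 2 of $\Phi$. Similarly, for sign flips $\alpha_k \in \{\pm 1\}$, the identity $(\alpha\Phi)(\alpha_1 x_1,\ldots,\alpha_d x_d) = \alpha\Phi(\alpha_1 x_1,\ldots,\alpha_d x_d) = \alpha\Phi(x_1,\ldots,x_d) = (\alpha\Phi)(x_1,\ldots,x_d)$ follows from property 3 of $\Phi$. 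Since the positive multiplicative constant commutes past all the structural symmetries, these two properties require no use of $\alpha > 0$ beyond the scalar factoring out.

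I do not anticipate any genuine obstacle here, as the statement is an elementary closure property. The only subtlety worth flagging explicitly is the role of strict positivity: it is exactly the definiteness axiom of the norm (the "only if" direction showing $\alpha\Phi(x)=0 \Rightarrow x=0$) where $\alpha>0$ is indispensable, whereas homogeneity, the triangle inequality, and the two invariance properties would survive even $\alpha \ge 0$. I would conclude by noting that having verified all three axioms, $\alpha\Phi$ is a symmetric gauge function, completing the proof.
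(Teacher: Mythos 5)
Your verification is correct and matches the paper's treatment: the paper states this proposition without a detailed proof, noting only that it ``directly follows from the fact that it is a norm,'' and your axiom-by-axiom check (including the observation that $\alpha>0$ is needed precisely for definiteness) is exactly the routine argument being alluded to. No gaps.
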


Observe that the class of symmetric gauge functions is closed under addition.

    \begin{prop}[Closure under addition]
        If $\Phi_1, \ldots \Phi_n$ are symmetric gauge functions, then so is $\Phi = \Phi_1 + \cdots + \Phi_n.$ 
    \end{prop}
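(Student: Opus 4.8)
The plan is to verify directly that $\Phi = \Phi_1 + \cdots + \Phi_n$ satisfies each of the three defining properties of a symmetric gauge function, invoking the corresponding property for each summand $\Phi_i$. Since the three conditions (being a norm, the symmetric property, and the absolute property) are each preserved under pointwise addition, the result follows componentwise.

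First I would check that $\Phi$ is a norm. Each $\Phi_i$ is a norm, so nonnegativity and the fact that $\Phi(x) = 0$ implies $x = 0$ follow because a finite sum of nonnegative terms vanishes only when each term vanishes, and $\Phi_1(x) = 0$ already forces $x = 0$. Absolute homogeneity $\Phi(\alpha x) = |\alpha| \Phi(x)$ follows by factoring $|\alpha|$ out of each summand, and the triangle inequality follows by summing the triangle inequalities $\Phi_i(x + y) \le \Phi_i(x) + \Phi_i(y)$ over $i$ and regrouping. Hence $\Phi$ is a norm.

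Next I would verify the symmetric property. For any permutation map and any $x \in \reals^d$, each summand satisfies $\Phi_i(\sigma_d(x)) = \Phi_i(x)$ by assumption, so summing over $i$ gives $\Phi(\sigma_d(x)) = \sum_i \Phi_i(\sigma_d(x)) = \sum_i \Phi_i(x) = \Phi(x)$. The absolute property is handled identically: for sign choices $\alpha_k \in \{\pm 1\}$, applying the absolute property of each $\Phi_i$ and summing yields $\Phi(\alpha_1 x_1, \ldots, \alpha_d x_d) = \Phi(x_1, \ldots, x_d)$.

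There is no real obstacle here, as every step reduces to linearity of summation combined with the defining axioms; the only point requiring minor care is the norm definiteness, where one must note that it suffices for a single summand to be positive definite (each in fact is, since each $\Phi_i$ is a norm). I would close by remarking that the statement for a general finite sum follows from the two-term case by induction, or simply by applying the summation argument above directly to all $n$ terms at once.
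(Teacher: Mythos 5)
Your proposal is correct and follows essentially the same route as the paper: verifying the three defining properties summand-by-summand, with the general case handled by induction (or directly). The only difference is cosmetic — you spell out the norm axioms that the paper cites as a known fact, which is fine.
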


\begin{proof}
    We prove this for the case $n=2$. We know that if $\Phi_1$ and $\Phi_2$ are norms on $\reals^n$ and then so is $\Phi = \Phi_1 + \Phi_2$. Let $P: \reals^n \to \reals^n$ be a permutation matrix and fix some $x \in \reals^n$. Then 
    \[
    \Phi(Px) \defas \Phi_1(Px) + \Phi_2(Px) = \Phi_1(x) + \Phi_2(x) \defas \Phi(x). 
    \]
    This establishes the symmetric property of $\Phi$.
    To show gauge invariance, let $\{\alpha_j\}_{j=1}^n \subseteq \{\pm 1\}$. Then
    \begin{align*}
        \begin{split}
            \Phi(\alpha_1 x_1, \ldots, \alpha_n x_n)  &=  \Phi_1(\alpha_1 x_1, \ldots, \alpha_n x_n) +  \Phi_2(\alpha_1 x_1, \ldots, \alpha_n x_n)
            \\&= \Phi_1(x_1, \ldots, x_n) + \Phi_2(x_1, \ldots, x_n) 
            \\& = \Phi(x).
        \end{split}
    \end{align*}
    Thus $\Phi$ satisfies all the properties of a symmetric gauge invariant function. We can induct to prove the general case $n > 2$.
\end{proof}

Recall the following definition of the dual of a norm on $\reals^d$.

    \begin{defn}
        If $\Phi$ is a norm on $\mathbb{R}^d$ the dual of $\Phi$ is denoted by $\Phi^{*}: \reals^d \to \reals_+$ and defined as 
        \[
        \Phi^*(x) \defas \sup_{\{y \in \mathbb{R}^d : \Phi(y) \leq 1\}} | \langle x,y \rangle |.
        \]
        where $\langle \cdot, \cdot \rangle$ is the cannonical inner product of $\reals^d$.
    \end{defn}

The class of symmetric gauge functions are closed under taking the dual.

    \begin{prop}[Closure under the dual]
        If $\Phi:\reals^d \to \reals_+$ is a symmetric gauge function then so is $\Phi^*$.
    \end{prop}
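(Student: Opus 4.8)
The plan is to verify directly that $\Phi^*$ satisfies the three defining properties of a symmetric gauge function: that it is a norm, that it is invariant under permutations, and that it is invariant under sign flips. The unifying observation throughout is that both permutation matrices and diagonal sign matrices are orthogonal, so that a change of variables $z = P y$ in the defining supremum is a bijection of $\reals^d$ that preserves the canonical inner product; the only thing one must additionally check is that it maps the feasible set $\{y : \Phi(y) \le 1\}$ onto itself, and this is precisely where the invariance properties of $\Phi$ enter.

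First I would dispatch the norm property. This is the standard fact from convex analysis that the dual of any norm on a finite-dimensional space is again a norm; finiteness of $\Phi^*(x)$ for every $x$ follows from the equivalence of norms on $\reals^d$ (so that $\{y : \Phi(y) \le 1\}$ is bounded), and absolute homogeneity together with the triangle inequality are immediate from the definition as a supremum of absolute values of linear functionals. I would simply cite this rather than reprove it.

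Next I would establish the symmetric property. Fix a permutation matrix $P$ and a point $x \in \reals^d$. Writing $\langle P x, y\rangle = \langle x, P^\top y\rangle$ and substituting $z = P^\top y$, I would use that $P^\top$ is itself a permutation matrix and that $\Phi$ is symmetric to conclude $\Phi(P^\top y) = \Phi(y)$, so that the substitution carries the constraint set $\{y : \Phi(y) \le 1\}$ bijectively onto itself. Hence
\[
\Phi^*(Px) = \sup_{\Phi(y)\le 1} |\langle x, P^\top y\rangle| = \sup_{\Phi(z)\le 1} |\langle x, z\rangle| = \Phi^*(x).
\]
The gauge invariance is proved by an identical argument: letting $E = \Diag(\alpha_1,\dots,\alpha_d)$ with $\alpha_k \in \{\pm 1\}$, the matrix $E$ is orthogonal with $E^\top = E$, so $\langle E x, y\rangle = \langle x, E y\rangle$, and the gauge invariance of $\Phi$ gives $\Phi(Ey) = \Phi(y)$, again preserving the feasible set under $z = E y$ and yielding $\Phi^*(Ex) = \Phi^*(x)$.

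There is no serious obstacle here; the proof is essentially a bookkeeping exercise in changing variables inside the supremum. The only point requiring genuine care is the verification that the feasible region is invariant under the relevant orthogonal transformation, which is where one must invoke the two invariance properties of $\Phi$ rather than merely its being a norm. I would state the two nontrivial steps uniformly by noting that for any orthogonal matrix $U$ leaving the $\Phi$-unit ball invariant one has $\Phi^*(Ux) = \Phi^*(x)$, and then apply this to permutation matrices and to sign matrices in turn.
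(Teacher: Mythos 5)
Your proposal is correct and follows essentially the same route as the paper's proof: both dispatch the norm property by citing the standard duality fact, and both establish symmetry and gauge invariance via a change of variables inside the supremum, using that the relevant orthogonal transformations (the paper works with the group $P_d \circ \mathbb{Z}_2$ of signed permutations at once, you treat permutations and sign matrices separately before unifying them) preserve both the inner product and the $\Phi$-unit ball. The only cosmetic difference is that the paper writes $|\langle Px, Py\rangle| = |\langle x, y\rangle|$ and substitutes $\tilde{y} = P^{-1}y$, whereas you use adjointness $\langle Px, y\rangle = \langle x, P^{\top}y\rangle$ directly --- the same argument in different bookkeeping.
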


\begin{proof}
    We simultaneously prove the symmetry and gauge invariance property of $\Phi^*$. To this end, let $P_d \circ \mathbb{Z}_2$ be the group of permutations and sign changes of the coordinates.
    Since $\Phi$ is a gauge function we have
    \[
    \{y \in \reals^d: \Phi(y) \leq 1\} = \{y \in \reals^d: \Phi(Py) \leq 1\} \qquad P \in P_d \circ \mathbb{Z}_2.
    \]
    Moreover, observe that $| \langle Px, Py \rangle | = | \langle x, y \rangle |$ for all $x,y \in \reals^n$ and all $P \in P_d \circ \mathbb{Z}_2$. Hence for all $P \in P_d \circ \mathbb{Z}_2$, we have 
    \begin{align*}
        \begin{split}
            \Phi^*(Px) &= \sup\left\{ |\langle Px, y \rangle| : \Phi(y) \leq 1 \right\}
            \\&= \sup \left\{ | \langle Px, y \rangle | : \Phi(P^{-1}y) \leq 1  \right\} 
            \\&= \sup \left\{ | \langle Px, P\tilde{y} \rangle | : \Phi(\tilde{y}) \leq 1  \right\} \qquad \text{(Change-of-variables $\tilde{y} \leftarrow P^{-1}y$)}
            \\&= \sup \left\{ | \langle x, \tilde{y} \rangle | : \Phi(\tilde{y}) \leq 1  \right\}
            \\ &= \Phi^*(x).
        \end{split}
    \end{align*}
    It is a standard fact that the dual of a norm is a norm. Hence $\Phi^*$ is indeed a symmetric gauge function. 
\end{proof}

The following corollary illustrates that the family of $\ell_p$ norms are closed under the $\ell_p$ transformations. 

 \begin{cor}[Exercise IV.1.9~\citep{bhatia97}]
    Let $\Phi$ be a symmetric gauge function and let $p \geq 1$. Let
$$
\Phi^{(p)}(x)=\left[\Phi\left(|x|^p\right)\right]^{1 / p} 
$$
then $\Phi^{(p)}$ is a symmetric gauge function.
Suppose $\Phi_p$ denotes the family of $\ell_p$ norms, then 
        \[
        \Phi_{p_1}^{(p_2)} = \Phi_{p_1 p_2} \qquad \forall p_1, p_2 \geq 1.
        \]
\end{cor}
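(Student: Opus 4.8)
The plan is to prove the statement in two stages: first establishing that $\Phi^{(p)}$ satisfies all the axioms of a symmetric gauge function, and then verifying the $\ell_p$ identity by direct computation. Throughout it is convenient to write $|x|^p = (|x_1|^p,\dots,|x_d|^p)$ for the coordinate-wise operation, and to use the \emph{monotonicity} of symmetric gauge functions: if $0 \le a_i \le b_i$ for every $i$, then $\Phi(a) \le \Phi(b)$. This is a standard consequence of the absolute property together with convexity of norms, obtained by writing a shrunken coordinate $t x_i$ (for $t\in[0,1]$) as the convex combination $\tfrac{1+t}{2}x + \tfrac{1-t}{2}(\text{sign-flip of the }i\text{-th coordinate of }x)$ and invoking the absolute property to cancel the sign flip; I would either cite it from~\citep{bhatia97} or include this one-line argument.

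For the first stage, most axioms are immediate. Since $\Phi^{(p)}$ depends on $x$ only through $|x|^p$, it is automatically invariant under sign changes (the absolute property), and because a permutation of $x$ merely permutes $|x|^p$ while $\Phi$ is symmetric, it is invariant under permutations. Definiteness holds since $\Phi^{(p)}(x) = 0$ forces $\Phi(|x|^p) = 0$, hence $|x|^p = 0$ and $x = 0$, and absolute homogeneity is a short computation using $|\lambda x|^p = |\lambda|^p |x|^p$ and the homogeneity of $\Phi$. The only nontrivial axiom is the triangle inequality, which I expect to be the main obstacle.

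To prove subadditivity I would first use $|x_i + y_i| \le |x_i| + |y_i|$ coordinate-wise together with monotonicity to reduce to nonnegative vectors, so that it suffices to show, for $a,b \ge 0$, that $\Phi\big((a+b)^p\big)^{1/p} \le \Phi(a^p)^{1/p} + \Phi(b^p)^{1/p}$. The key pointwise estimate comes from convexity of $t \mapsto t^p$ for $p\ge 1$: for any $\lambda \in (0,1)$,
\[
(a_i+b_i)^p = \Big[\lambda\tfrac{a_i}{\lambda} + (1-\lambda)\tfrac{b_i}{1-\lambda}\Big]^p \le \lambda^{1-p} a_i^p + (1-\lambda)^{1-p} b_i^p,
\]
so $(a+b)^p \le \lambda^{1-p}a^p + (1-\lambda)^{1-p}b^p$ holds coordinate-wise. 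Applying monotonicity and then subadditivity and positive homogeneity of $\Phi$ gives
\[
\Phi\big((a+b)^p\big) \le \lambda^{1-p}\Phi(a^p) + (1-\lambda)^{1-p}\Phi(b^p).
\]
Writing $s = \Phi(a^p)^{1/p}$, $t = \Phi(b^p)^{1/p}$ and minimizing the right-hand side over $\lambda$ (the optimum sits at $\lambda^\ast = s/(s+t)$) collapses the bound to $(s+t)^p$, and taking $p$-th roots yields exactly the desired inequality; the degenerate cases $s=0$ or $t=0$ are handled directly. This optimization step is the crux of the argument.

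Finally, for the $\ell_p$ identity I would simply substitute the definition of $\Phi_{p_1}$:
\begin{align*}
\Phi_{p_1}^{(p_2)}(x) &= \Big[\Phi_{p_1}\big(|x|^{p_2}\big)\Big]^{1/p_2}
= \Big[\big(\textstyle\sum_i |x_i|^{p_1 p_2}\big)^{1/p_1}\Big]^{1/p_2} \\
&= \big(\textstyle\sum_i |x_i|^{p_1 p_2}\big)^{1/(p_1 p_2)} = \Phi_{p_1 p_2}(x).
\end{align*}
The substantive work is entirely in the triangle inequality for $\Phi^{(p)}$, where monotonicity and the minimization over $\lambda$ do all the heavy lifting; everything else is bookkeeping.
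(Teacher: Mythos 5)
Your proof is correct, and it takes a more self-contained route than the paper. The paper does not actually prove this statement: it attributes the corollary to Exercise IV.1.9 of Bhatia, and in its later Proposition~3.14 (the $\ell_p$-transformation result, \texttt{prop:lp\_sgf}) it dispatches the one nontrivial axiom --- the triangle inequality for $\Phi^{(p)}$ --- by citing Minkowski's inequality for symmetric gauge functions (Theorem IV.1.8 in Bhatia), with all remaining axioms and the $\ell_p$ identity left as immediate from the definitions, just as in your bookkeeping steps. What you have done differently is to reconstruct that cited Minkowski inequality from first principles: you derive monotonicity of $\Phi$ on nonnegative vectors from the absolute property plus convexity (the sign-flip convex-combination trick), use convexity of $t \mapsto t^p$ to get the coordinate-wise bound $(a+b)^p \le \lambda^{1-p}a^p + (1-\lambda)^{1-p}b^p$, push this through $\Phi$ by monotonicity, subadditivity, and homogeneity, and then optimize over $\lambda$ at $\lambda^\ast = s/(s+t)$ to collapse the bound to $(s+t)^p$. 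This is essentially the textbook proof of Bhatia's Theorem IV.1.8, so you are not proving something new, but your version buys transparency: it makes explicit that the monotone (``strongly isotone''-type) property of symmetric gauge functions --- which the paper invokes elsewhere, e.g., in its proof that unitarily invariant norms are g-convex --- is exactly what powers the $\ell_p$-transformation closure, whereas the paper's citation-based proof buys brevity at the cost of hiding this mechanism. Your handling of the degenerate cases $s=0$ or $t=0$ and the direct computation verifying $\Phi_{p_1}^{(p_2)} = \Phi_{p_1 p_2}$ are both fine.
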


\section{Structured Regularization}
We consider regularizations of problem~\ref{eq:prob} of the form
\begin{equation}
    \min_{X \in \pd} \hat{\phi}(X) \defas \phi(X) + R(X) \; ,
\end{equation}
where the regularizer $R:\pd \to \reals$ is determined by the structure of the constraints $\Xc$ and the objective $\phi$. 
In this section, we will show a construction of $R$ via symmetric gauge functions for two types of constraints and examine the properties of the regularized objective $\hat{\phi}$. We will further discuss the construction of a wider range of regularizers following similar ideas.

\paragraph{Sparsity regularization} In the first instance of problem~\ref{eq:prob} that we consider $\Xc$ induces sparsity in the eigenspectrum of the solution, i.e., it encourages low-rank solutions. The corresponding \emph{sparsity regularization problem} is of the form 
\begin{equation}\label{eq:sparsity_reg_problem}
    \hat{\phi}(X) \defas \phi(X) + \beta S(X)   \; ,
\end{equation}
where $S:\pd \to \reals$ and $\beta \geq 0$ a tunable hyperparameter. Problems of this form arise for instance in the computation of statistical estimator, such as covariance estimation~\citep{bien}, Gaussian graphical models~\citep{uhler2017gaussiangraphicalmodelsalgebraic}, which we discuss later in the paper, as well as in recommender systems~\citep{vandereycken2013low}.

\paragraph{Ball constraint regularization} In the second class of problems that we consider, side information on the solution is given by a coarse empirical estimate and we want to confine our optimization procedure within a neighborhood of this estimator.
Let $d(X, \hat{X}): \pd \times \pd \to \reals_+$ denote a metric on the  of $\pd$, $\hat{X} \in \pd$ some fixed nominal estimate of the optimum $X^* \defas \argmin_{X \in \pd}\phi(X)$, and $\beta \geq 0$ a tunable hyperparameter. Then the \emph{ball constraint regularization} problem is given by
\begin{equation}\label{eq:ball_constrain_problem}
    \hat{\phi}(X) \defas \phi(X) + \beta d(X, \hat{X}) \; .    
\end{equation}
Problems of this form arise in the computation of statistical estimators, e.g., the Karcher mean~\citep{Karcher1977RiemannianCO} and optimistic likelihood~\citep{Nguyen2019CalculatingOL}, both of which we discuss later in the paper. 

\paragraph{Outline} This section is structured as follows. We will motivate symmetric gauge functions as a natural starting point from which we can generate the sparsity and the ball constraint regularizers. For each class of regularizers we will first show that they arise from symmetric gauge functions. Second, we justify their value by showcasing their desirable algorithmic properties for g-convex optimization. Third,  we identify examples in which they arise in the prior optimization literature and show that rewriting these well-known regularizers in terms of our two classes of regularizers provides insight into their algorithmic properties. Fourth, we show that we can generate novel sparsity or ball constraint regularizers in a principled manner from these symmetric gauge functions. Moreover, leveraging the algebraic structure of symmetric gauge function, a wide range of new regularizers for other types of objectives and constraints can be constructed. Finally, we present a principled way of tuning the hyperparameters introduced by these regularizers. 

\subsection{Symmetric Gauge Functions as Sparsity and Ball Constraint Regularizers}
Let $\Phi \in \SG$ be a \emph{symmetric gauge function} (short: \emph{SG}) and let $\hat{X} \in \pd$ be fixed. In this section, we will show how to find the corresponding sparsity and ball constraint regularizers, i.e.,  $S_\Phi(X)$ and $d_\Phi(X, \hat{X})$, respectively.

\subsubsection{SG as Sparsity Regularizers}  

There is a deep connection between symmetric gauge functions and unitarily invariant norms by a theorem of von Neumann. We leverage this connection in order to find $S_\Phi(X)$.

    \begin{defn}[Unitarily Invariant Norms]
    We say that a norm $\| \cdot \|$ on $\reals^{d \times d}$ is  unitarily (or orthogonally) invariant if $\|UAV\| = \|A\|$ for all orthogonal operators $U,V$ on $\mathbb{R}^{d \times d}$ and all $A \in \mathbb{R}^{d \times d}$. 
    \end{defn}

The next theorem is von Neumann's~\citep{von1937some} result on the correspondence between symmetric gauge functions and unitarily invariant norms. We apply the result of Theorem IV.2.1~\citep{bhatia97} to the context of $\pd$.

    \begin{theorem}[von Neumann]
        Given a symmetric gauge function $\Phi$ on $\reals^d$, define a function on $\pd$ as 
        \[
        \|A\|_\Phi = \Phi(\lambda(A))
        \]
        where $\lambda(A)$ denotes the singular values of $A$. Then this defines a unitarily invariant norm on $\pd$. Conversely, given any unitarily invariant norm $\| \cdot \|$ on $\pd$, define a function on $\mathbb{R}^d$ by
$$
\Phi_{\| \cdot||}(x)=|| \operatorname{diag}(x)||,
$$
where $\operatorname{diag}(x)$ is the diagonal matrix with entries $x_1, \ldots, x_d$ on its diagonal. Then this defines a symmetric gauge function on $\mathbb{R}^d$.
    \end{theorem}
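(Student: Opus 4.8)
The plan is to prove the two implications separately. Since the norm axioms only make sense on a vector space, I treat the candidate norm as defined on the ambient space $\reals^{d\times d}$ (restricting to symmetric matrices, and hence to $\pd$, wherever the statement demands it); on $\pd$ the singular values $\lambda(A)$ agree with the eigenvalues, so nothing is lost. The converse direction (norm $\Rightarrow$ gauge function) is essentially mechanical, resting on the realization of coordinate permutations and sign flips as actions by orthogonal matrices. The forward direction (gauge function $\Rightarrow$ norm) amounts to verifying the norm axioms for $\|A\|_\Phi = \Phi(\lambda(A))$, of which only the triangle inequality is nontrivial; this will be the main obstacle.

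I would dispatch the converse direction first. Given a unitarily invariant norm $\|\cdot\|$, set $\Phi(x) = \|\operatorname{diag}(x)\|$. Because $x \mapsto \operatorname{diag}(x)$ is linear and injective, absolute homogeneity, definiteness, and subadditivity of $\Phi$ transfer directly from $\|\cdot\|$ (for instance $\operatorname{diag}(x+y) = \operatorname{diag}(x) + \operatorname{diag}(y)$ gives the triangle inequality, and injectivity gives definiteness). For the symmetric property I would write $\operatorname{diag}(Px) = P_\pi\, \operatorname{diag}(x)\, P_\pi^\top$, where $P_\pi$ is the orthogonal permutation matrix associated with the permutation $P$; unitary invariance then yields $\Phi(Px) = \Phi(x)$. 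For the absolute property, a sign pattern $(\epsilon_1,\dots,\epsilon_d) \in \{\pm1\}^d$ is implemented by the orthogonal diagonal matrix $E = \operatorname{diag}(\epsilon_1,\dots,\epsilon_d)$ via $\operatorname{diag}(\epsilon_1 x_1, \dots, \epsilon_d x_d) = E\,\operatorname{diag}(x)$, so unitary invariance again gives invariance of $\Phi$.

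For the forward direction I would set $\|A\|_\Phi = \Phi(\lambda(A))$ and verify the axioms in increasing order of difficulty. Unitary invariance is immediate from $\lambda(UAV) = \lambda(A)$ for orthogonal $U,V$. Absolute homogeneity follows from $\lambda(cA) = |c|\,\lambda(A)$ together with homogeneity of the norm $\Phi$, and definiteness from the equivalence $\lambda(A) = 0 \iff A = 0$ together with definiteness of $\Phi$. What remains, and what constitutes the heart of the theorem, is subadditivity: $\Phi(\lambda(A+B)) \le \Phi(\lambda(A)) + \Phi(\lambda(B))$.

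My strategy for the triangle inequality combines two classical facts. The first is that every symmetric gauge function is monotone under weak majorization: $x \prec_w y$ implies $\Phi(x) \le \Phi(y)$. I would prove this by reducing to nonnegative entries via the absolute property, representing the weakly majorized vector through a doubly substochastic matrix (Hardy–Littlewood–P\'olya), dominating that by a doubly stochastic matrix, writing the latter as a convex combination of permutation matrices (Birkhoff), and finally combining the coordinatewise monotonicity (itself a consequence of the absolute property), convexity, and permutation invariance of $\Phi$. The second fact is the family of Ky Fan inequalities $\sum_{i=1}^k \lambda_i(A+B) \le \sum_{i=1}^k \lambda_i(A) + \sum_{i=1}^k \lambda_i(B)$ for all $k$, which express precisely that $\lambda(A+B) \prec_w \lambda(A) + \lambda(B)$. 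Chaining the two gives
\[
\Phi(\lambda(A+B)) \le \Phi\big(\lambda(A) + \lambda(B)\big) \le \Phi(\lambda(A)) + \Phi(\lambda(B)),
\]
where the first step is weak-majorization monotonicity and the second is the triangle inequality for $\Phi$ on $\reals^d$. I expect the genuine work to lie in establishing these two auxiliary results — in particular the variational characterization of partial sums of singular values underlying the Ky Fan inequalities — rather than in the assembly above.
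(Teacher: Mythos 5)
The paper does not prove this theorem at all --- it imports it verbatim as Theorem IV.2.1 of \citep{bhatia97} --- so there is no internal proof to compare against; your argument is correct and is essentially the canonical proof from that reference. Your converse direction (realizing coordinate permutations as orthogonal conjugations $\operatorname{diag}(Px) = P_\pi \operatorname{diag}(x) P_\pi^\top$ and sign patterns as multiplication by an orthogonal diagonal matrix) is exactly the standard argument, and your forward direction correctly isolates the only nontrivial axiom, the triangle inequality, and proves it by chaining the two classical lemmas: Ky Fan's weak majorization $\lambda(A+B) \prec_w \lambda(A) + \lambda(B)$ and the strongly isotone property of symmetric gauge functions (via Hardy--Littlewood--P\'olya and Birkhoff). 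Notably, these are precisely the two facts the paper itself invokes elsewhere --- Exercise II.1.14 and the strongly isotone property from p.~45 of \citep{bhatia97} --- in its proof that unitarily invariant norms are g-convex, so your route is fully consistent with the paper's toolkit; your opening remark that the norm must live on the ambient space $\reals^{d\times d}$ (since $\pd$ is a cone, not a vector space, and $UAV$ need not stay in $\pd$) is a legitimate and welcome correction to the paper's slightly loose phrasing ``norm on $\pd$.'' The only point worth flagging is historical rather than mathematical: von Neumann's original proof proceeds differently, via the duality $\Phi^{**} = \Phi$ and the trace inequality $|\operatorname{tr}(AB)| \le \sum_i \sigma_i(A)\sigma_i(B)$, whereas yours follows the modern majorization-based treatment.
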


\begin{rmk}\normalfont
    Given a symmetric gauge function $\Phi \in \SG$, we propose using the corresponding unitarily invariant norm as our sparsity regularizer, that is, $S_\Phi(X) = \|X\|_\Phi$.
\end{rmk}

\begin{example}
    We provide two well-known examples of symmetric gauge function and their associated unitarily invariant norms. Let $A \in \pd$ and $\lambda(A) = \{\lambda_1, \ldots, \lambda_d\} \in \reals^d$ denote the ordered eigenvalues, i.e., $\lambda_1 \geq \cdots \geq \lambda_d > 0$. First, consider the class of \textit{Schatten p-norms} defined as 

\[
\begin{aligned}
    &\|A\|_p = \Phi_p(\lambda(A)) = \left( \sum_{j=1}^n |\lambda_j(A)|^p \right)^{\frac{1}{p}}, \qquad 1 \leq p < \infty,
    \\ &\|A\|_{\infty}=\Phi_{\infty}(\lambda(A))=\lambda_1(A)=\|A\| .
\end{aligned}
\]
Second, consider the class of \textit{Ky Fan k-norms} defined as 
\[
\|A\|_{(k)}=\sum_{j=1}^k |\lambda_j^{\downarrow}(A)|, \quad 1 \leq k \leq n
\]
which is the sum of the first $k$ largest eigenvalues of $A$.
\end{example}

\subsubsection{SG as Ball Constraints}\label{sec:SG_Ball_Constraints}
In addition, for every symmetric gauge function $\Phi \in \mathcal{SG}(\pd)$ there exists a complete metric $d_\Phi$ on the convex cone of $\pd$. These metrics can provide a way to encourage our iterates to stay within a neighborhood of an estimated solution.

We can define the length of a path $\gamma:[0,1] \to \pd$ with respect to $\Phi$ as follows.

    \begin{defn}
        For a path $\gamma:[0,1] \rightarrow \pd$ we define its length w.r.t a symmetric gauge function $\Phi: \mathbb{R}^n \rightarrow \mathbb{R}_{+}$ as
$$
L_{\Phi}(\gamma) \defas \int_0^1\left\|\gamma^{-1 / 2}(t) \gamma^{\prime}(t) \gamma^{-1 / 2}(t)\right\|_{\Phi} d t .
$$
    \end{defn}

This gives rise to the following metric $d_\Phi$ associated with $\Phi$.

    \begin{defn}
        For a given symmetric gauge function $\Phi: \mathbb{R}^d  \rightarrow \reals_+$ we define the distance between $A, B \in \pd$ with respect to $\Phi$ as
$$
d_{\Phi}(A, B) \stackrel{\text { def }}{=} \inf \left\{L_{\Phi}(\gamma): \gamma \text { is a path from } A \text { to } B\right\}.
$$
    \end{defn}

It turns out that $d_\Phi$ can be expressed in terms of the unitarily invariant norm $\|\cdot\|_\Phi$. Moreover $d_\Phi$ is a complete metric on the convex cone of $\pd$ with several nice properties illustrated by the following theorem.

    \begin{theorem}[Theorem 2.2~\citep{LIM2013115}]\label{theorem:dphi_properties}
     We have $d_{\Phi}(A, B)=\left\|\log \left(A^{-1 / 2} B A^{-1 / 2}\right)\right\|_{\Phi}$ and $d_{\Phi}$ is a complete metric distance on the convex cone of $\mathbb{P}_d$ such that for $A, B \in \mathbb{P}_d$ and for invertible matrix $M$,
\begin{enumerate}
    \item $d_{\Phi}(A, B)=d_{\Phi}\left(A^{-1}, B^{-1}\right)=d_{\Phi}\left(M A M^*, M B M^*\right)$;
    \item $d_{\Phi}(A \# B, A)=d_{\Phi}(A \# B, B)=\frac{1}{2} d_{\Phi}(A, B)$, where $A \# B=A \#_{\frac{1}{2}} B$;
    \item $d_{\Phi}\left(A \#_t B, A \#_s B\right)=|s-t| d_{\Phi}(A, B)$ for all $t, s \in[0,1]$;
    \item $d_{\Phi}\left(A \#_t B, C \#_t D\right) \leq(1-t) d_{\Phi}(A, C)+t d_{\Phi}(B, D)$ for all $t \in[0,1]$.
\end{enumerate}
    \end{theorem}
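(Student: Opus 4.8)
The plan is to establish the closed form $d_\Phi(A,B)=\|\log(A^{-1/2}BA^{-1/2})\|_\Phi$ first, because properties (1)--(3) and completeness then follow almost mechanically, leaving (4) as the only genuinely new analytic statement. The backbone is the \emph{congruence invariance} of the length functional $L_\Phi$. Given a path $\gamma$ and an invertible $M$, set $\tilde\gamma(t)=M\gamma(t)M^*$. Using the left polar decomposition $M\gamma^{1/2}(t)=\tilde\gamma^{1/2}(t)\,U(t)$ with $U(t)$ unitary (legitimate since $(M\gamma^{1/2})(M\gamma^{1/2})^*=M\gamma M^*=\tilde\gamma$), a direct substitution with $V=\gamma'$ gives
\[
\tilde\gamma^{-1/2}\tilde\gamma'\tilde\gamma^{-1/2}=U\,(\gamma^{-1/2}\gamma'\gamma^{-1/2})\,U^*,
\]
so by the unitary invariance of $\|\cdot\|_\Phi$ the integrand of $L_\Phi$ is pointwise unchanged. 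Hence $L_\Phi(\tilde\gamma)=L_\Phi(\gamma)$ and $d_\Phi(MAM^*,MBM^*)=d_\Phi(A,B)$. Taking $M=A^{-1/2}$ reduces the whole computation of $d_\Phi(A,B)$ to that of $d_\Phi(I,C)$ with $C=A^{-1/2}BA^{-1/2}$.

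For $d_\Phi(I,C)$ I would prove matching bounds. The \emph{upper bound} uses the canonical curve $\gamma(t)=C^{t}=\exp(t\log C)$ from $I$ to $C$: since $C^{t}$ and $\log C$ commute, $\gamma^{-1/2}\gamma'\gamma^{-1/2}=\log C$ for every $t$, whence $L_\Phi(\gamma)=\|\log C\|_\Phi$ and $d_\Phi(I,C)\le\|\log C\|_\Phi$. The \emph{lower bound} is the crux: for an arbitrary path I claim the infinitesimal contraction
\[
\left\|\tfrac{d}{dt}\log\gamma(t)\right\|_\Phi\le\left\|\gamma^{-1/2}\gamma'\gamma^{-1/2}\right\|_\Phi,
\]
which, integrated over $[0,1]$, yields $\|\log C\|_\Phi=\|\log C-\log I\|_\Phi\le L_\Phi(\gamma)$ and therefore $\|\log C\|_\Phi\le d_\Phi(I,C)$. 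Combining the two directions gives the formula.

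The infinitesimal inequality is where the real work lies, and I expect it to be the main obstacle. Via the Daleckii--Krein formula, in an eigenbasis of $\gamma(t)$ with eigenvalues $\lambda_i$ the derivative $\frac{d}{dt}\log\gamma(t)=D\log(\gamma(t))[\gamma'(t)]$ is the Schur multiplier of $\gamma^{-1/2}\gamma'\gamma^{-1/2}$ by the matrix $R=[r_{ij}]$ with
\[
r_{ij}=\frac{(\log\lambda_i-\log\lambda_j)/(\lambda_i-\lambda_j)}{(\lambda_i\lambda_j)^{-1/2}}.
\]
The scalar inequality between the logarithmic and geometric means, $\frac{\log a-\log b}{a-b}\le(ab)^{-1/2}$, shows $r_{ij}\le 1$ with $r_{ii}=1$, and (after the substitution $\lambda_i=e^{x_i}$, where $R$ becomes the kernel $\frac{(x_i-x_j)/2}{\sinh((x_i-x_j)/2)}$) that $R$ is positive semidefinite with unit diagonal. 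By the standard monotonicity of unitarily invariant norms under Schur multiplication by such a matrix, $\|R\circ X\|_\Phi\le\|X\|_\Phi$, which is exactly the claimed contraction. Packaging a pointwise mean inequality into a norm bound valid for \emph{every} symmetric gauge function is precisely where von Neumann's correspondence (unitary invariance) is indispensable.

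With the formula in hand the remaining items are short. \textbf{Property (1):} the eigenvalues of $A^{-1/2}BA^{-1/2}$ equal those of $A^{-1}B$, and inverting both $A,B$ replaces them by their reciprocals, so $\log$ changes sign and the gauge (sign) invariance of $\Phi$ gives $d_\Phi(A^{-1},B^{-1})=d_\Phi(A,B)$, while congruence invariance was shown above. \textbf{Properties (2),(3):} congruence with $M=A^{-1/2}$ sends $A\#_t B\mapsto C^{t}$, so $d_\Phi(A\#_tB,A\#_sB)=\|\log C^{\,s-t}\|_\Phi=|s-t|\,\|\log C\|_\Phi=|s-t|\,d_\Phi(A,B)$, and (2) is the case $\{s,t\}\in\{\{0,\tfrac12\},\{\tfrac12,1\}\}$. \textbf{Completeness:} the contraction lemma gives $\|\log A_n-\log A_m\|_\Phi\le d_\Phi(A_n,A_m)$, so a $d_\Phi$-Cauchy sequence has $\{\log A_n\}$ Cauchy in the complete space $(\mathbb{H}_d,\|\cdot\|_\Phi)$; its limit exponentiates to a point of $\pd$ to which $A_n$ converges. \textbf{Property (4)} is the second substantive hurdle: it is the joint geodesic convexity of $d_\Phi$, i.e.\ convexity of $t\mapsto d_\Phi(A\#_tB,C\#_tD)$. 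I would reduce one argument to the identity by congruence and then establish the convexity inequality for unitarily invariant norms of logarithms of weighted geometric means; I expect this, like the lower bound, to rest on operator convexity/concavity of the geometric mean combined with Schur-multiplier norm monotonicity.
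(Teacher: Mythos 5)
First, a point of reference: the paper does not actually prove this theorem --- it is imported verbatim as Theorem~2.2 of \citep{LIM2013115} --- so your attempt can only be measured against the cited source, whose argument for the closed form is essentially the one you reconstruct. On that part your proposal is correct and standard: congruence invariance of $L_\Phi$ via the polar decomposition $M\gamma^{1/2}(t)=\tilde\gamma^{1/2}(t)U(t)$, the upper bound along $\gamma(t)=C^t$, and the exponential-metric-increasing lower bound via Daleckii--Krein. Your identification of the Schur multiplier as the kernel $\frac{(x_i-x_j)/2}{\sinh((x_i-x_j)/2)}$ is exactly right, and you correctly recognize that the entrywise bound $r_{ij}\le 1$ from the logarithmic--geometric mean inequality is by itself useless: what carries the norm inequality is positive semidefiniteness of $R$ (the function $x/\sinh x$ is positive definite, indeed infinitely divisible; see \citep[\S 5.2]{bhatia07positivedefinitematrices}) combined with Schur's theorem that $\|R\circ X\|_\Phi\le(\max_i r_{ii})\,\|X\|_\Phi$ for $R\succeq 0$. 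Given the formula, your treatments of (1)--(3) and of completeness (integrated EMI gives $\|\log A_n-\log A_m\|_\Phi\le d_\Phi(A_n,A_m)$; exponentiate the limit in the complete finite-dimensional space $(\mathbb{H}_d,\|\cdot\|_\Phi)$ and use continuity of the closed form to upgrade to $d_\Phi$-convergence) are sound.

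Property (4), however, is a genuine gap, and you half-admit it. Your proposed reduction --- ``reduce one argument to the identity by congruence'' --- buys essentially nothing here: congruence acts simultaneously on all four matrices, so after normalizing, say, $A=I$ you are left with a three-free-variable inequality no simpler than the original, and ``operator convexity/concavity of the geometric mean combined with Schur-multiplier norm monotonicity'' is a hope, not an argument. What is actually needed is an additional substantive lemma, and this is where the real content of the cited result lies: either the one-variable contraction
\begin{equation*}
d_\Phi\left(A\#_t B,\, C\#_t B\right)\le (1-t)\,d_\Phi(A,C)\; ,
\end{equation*}
from which (4) follows at once by the triangle inequality through the intermediate point $C\#_t B$ (the second-variable contraction being the same statement via $C\#_t B=B\#_{1-t}C$), or an Ando--Hiai-type log-majorization for weighted geometric means, e.g.\ $\lambda(A\#_t B)\prec_{\log}\lambda(A)^{1-t}\lambda(B)^{t}$ suitably combined with congruence invariance and the strong isotonicity of $\Phi$. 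Neither statement follows formally from the EMI machinery you have already set up: the EMI compares a path to a single chord, whereas (4) compares two geodesics, and in a Finsler (non-inner-product) setting one cannot invoke the semiparallelogram law of the Riemannian case to bridge that. As written, your proof establishes the displayed formula, properties (1)--(3), and completeness, but not property (4).
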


The following examples show that $d_\Phi$ recovers well-known metrics for specific symmetric gauge functions $\Phi$.

\begin{example}
    If we specify $\Phi$ to be the Schatten $2$-norm then the corresponding metric corresponds to the Riemannian affine-invariant metric on $\pd$. Moreover, if we choose $\Phi$ to be the $\infty$-Schatten norm then the associated metric is the \textit{Thompson metric}~\citep{thompson1963certain} on the positive definite cone. Namely,
    $$
d_{\infty}(A, B) = \max \{\log M(B / A), \log M(A / B)\}
$$
where $M(B / A) \defas \inf \{\alpha>0: B \leq \alpha A\}=\lambda_1\left(A^{-1 / 2} B A^{-1 / 2}\right)=\lambda_1\left(A^{-1} B\right)$. 
\end{example}

\subsection{Properties of Symmetric Gauge Function Regularizers}\label{section:Algo_benefit_SG}
In this section we discuss useful properties of general symmetric gauge functions, and our regularizers $S_\Phi(X)$, $d_\Phi(X,\hat{X})$ in particular. We will discuss the algorithmic implications of these results below in sec.~\ref{sec:4.1}.

\subsubsection{Preserving g-convexity}
The following proposition shows that unitarily invariant norms are g-convex on $\pd$ which implies that our regularized objective (Eq.~\ref{eq:sparsity_reg_problem}) remains g-convex. We will need the following definition.

\begin{defn}
    Let $x, y \in \reals^d$. We say \emph{$x$ is weakly-submajorized by $y$} if 
    \[
    \sum_{j=1}^k x_j^{\downarrow} \leq \sum_{j=1}^k y_j^{\downarrow}, \quad 1 \leq k \leq d.
    \]
    We denote this by $x \prec_w y$.
\end{defn}

    \begin{prop}[Unitarily Invariant Norms are g-convex]
        Let $\Phi: \reals^d \to \reals$ be a symmetric gauge function. Then the unitarily invariant norm $\|\cdot\|_\Phi: \pd \to \reals_+$ defined by $\|\cdot\|_\Phi = \Phi(\lambda(A))$ is g-convex.
    \end{prop}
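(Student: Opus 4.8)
The plan is to check the defining inequality of geodesic convexity directly along the affine-invariant geodesics. Since $\pd$ is a Cartan--Hadamard manifold, any $A,B\in\pd$ are joined by the unique geodesic $\gamma(t)=A\#_t B$ of \eqref{eq:intro_gcvx_def}, and because the restriction of a geodesic is again a geodesic, $\|\cdot\|_\Phi$ is g-convex exactly when the two-point inequality
\[
\|A\#_t B\|_\Phi \;\le\; (1-t)\,\|A\|_\Phi + t\,\|B\|_\Phi
\]
holds for all $A,B\in\pd$ and all $t\in[0,1]$. Writing $\|X\|_\Phi=\Phi(\lambda(X))$ and abbreviating $a_j=\lambda_j^\downarrow(A)$, $b_j=\lambda_j^\downarrow(B)$, I would reduce this inequality to a log-majorization for the eigenvalues of the weighted geometric mean together with the monotonicity and subadditivity of $\Phi$.

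The first ingredient is the classical log-majorization for the weighted geometric mean (Ando--Hiai): for $A,B\in\pd$ and $t\in[0,1]$,
\[
\lambda(A\#_t B)\;\prec_{\log}\;\lambda(A)^{1-t}\odot\lambda(B)^{t},
\]
where $\odot$ is the entrywise product of the decreasingly ordered eigenvalue vectors, i.e.\ $\prod_{j=1}^k\lambda_j^\downarrow(A\#_t B)\le\prod_{j=1}^k a_j^{1-t}b_j^{t}$ for $1\le k\le d$ (with equality at $k=d$ since $\det(A\#_t B)=\det(A)^{1-t}\det(B)^t$). Because $\exp$ is convex and increasing, log-majorization passes to weak submajorization, so $\lambda(A\#_t B)\prec_w \lambda(A)^{1-t}\odot\lambda(B)^{t}$. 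Every symmetric gauge function is monotone with respect to weak submajorization on the nonnegative orthant, which yields
\[
\Phi\big(\lambda(A\#_t B)\big)\;\le\;\Phi\big(\lambda(A)^{1-t}\odot\lambda(B)^{t}\big).
\]

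It then remains to dominate the right-hand side by the desired convex combination. The weighted AM--GM inequality gives the entrywise bound $a_j^{1-t}b_j^{t}\le (1-t)a_j + t b_j$, hence $\lambda(A)^{1-t}\odot\lambda(B)^{t}\le (1-t)\lambda(A)+t\lambda(B)$ coordinatewise; coordinatewise monotonicity of $\Phi$ followed by the triangle inequality and positive homogeneity of the norm $\Phi$ then give
\[
\Phi\big(\lambda(A)^{1-t}\odot\lambda(B)^{t}\big)\le \Phi\big((1-t)\lambda(A)+t\lambda(B)\big)\le (1-t)\Phi(\lambda(A))+t\,\Phi(\lambda(B)).
\]
Chaining the three displays produces the two-point inequality and hence g-convexity.

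I expect the only real obstacle to be the first ingredient: the Ando--Hiai log-majorization is a genuinely nontrivial spectral inequality, reflecting that the eigenvalues of $A\#_t B$ are controlled by those of $A$ and $B$ only through their products and not entrywise. Once this result is invoked, the remaining steps---passage from log- to weak submajorization via convexity of $\exp$, monotonicity of $\Phi$ under weak submajorization, the entrywise AM--GM bound, and the triangle inequality---are all standard properties of symmetric gauge functions and require no delicate estimates. A minor point worth stating explicitly is that the two-point inequality, verified for all pairs $A,B$ and all $t$, already encodes full g-convexity of $f\circ\gamma$ along every geodesic segment.
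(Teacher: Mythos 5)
Your proof is correct, but it takes a genuinely different route from the paper's. The paper verifies only \emph{midpoint} g-convexity (tacitly using continuity of the norm to upgrade to full g-convexity) and works additively in the L\"owner order: its key inputs are the operator AM--GM inequality $A \# B \preceq \tfrac{1}{2}(A+B)$, the monotonicity of ordered eigenvalues under $\preceq$ (via min--max), Ky Fan's weak majorization $\lambda^\downarrow\bigl(\tfrac{A+B}{2}\bigr) \prec_w \lambda^\downarrow\bigl(\tfrac{A}{2}\bigr)+\lambda^\downarrow\bigl(\tfrac{B}{2}\bigr)$, the strong isotonicity of $\Phi$, and finally the triangle inequality. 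You instead argue multiplicatively at the spectral level: the log-majorization $\lambda(A \#_t B) \prec_{\log} \lambda(A)^{1-t}\odot\lambda(B)^{t}$ delivers the two-point inequality for \emph{every} $t\in[0,1]$ in one stroke, after passing to weak submajorization and applying the scalar AM--GM entrywise. Your key lemma is strictly stronger than the paper's (it controls products of top-$k$ eigenvalues for all $t$, not just the L\"owner order at the midpoint), and in exchange it dispenses with the midpoint-to-global reduction; indeed, before you discard it via AM--GM, the intermediate bound $\Phi\bigl(\lambda(A\#_t B)\bigr)\le \Phi\bigl(\lambda(A)^{1-t}\odot\lambda(B)^{t}\bigr)$ carries more information than g-convexity itself. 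One attributional quibble: the inequality you invoke is not literally the Ando--Hiai theorem (which states $A^r \#_\alpha B^r \prec_{\log} (A \#_\alpha B)^r$ for $r\ge 1$); the version you need follows, e.g., from operator monotonicity of $\#_t$, which gives $A\#_t B \preceq \lambda_1(A)^{1-t}\lambda_1(B)^{t} I$, upgraded to all $k$ via antisymmetric tensor powers using $\Lambda^k(A\#_t B)=\Lambda^k(A)\,\#_t\,\Lambda^k(B)$, or alternatively from $A\#_t B \prec_{\log} A^{1-t}B^{t}$ combined with Horn's inequality $\lambda(XY)\prec_{\log}\lambda(X)\odot\lambda(Y)$; you should cite or prove it in one of these forms. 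The remaining steps---weak log-majorization implies weak submajorization for positive vectors, strong isotonicity of symmetric gauge functions, entrywise monotonicity, and the norm axioms---are standard and correctly applied, as is your reduction of g-convexity along all geodesic segments to the two-point endpoint inequality.
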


\begin{proof}     
    To show $\|\cdot\|_\Phi$ is g-convex it suffices to verify midpoint g-convexity. We use the notation $\lambda^\downarrow(A) \preceq \lambda^\downarrow(B)$ to denote $\lambda_j(A) \leq \lambda_j(B)$ for $j = 1, \ldots, d$ for the spectrum ordered in decreasing order, i.e., 
    \[
    \lambda_1(A) \geq \lambda_2(A) \geq \cdots \geq \lambda_d(A) \qquad \text{and} \qquad \lambda_1(B) \geq \lambda_2(B) \geq \cdots \geq \lambda_d(B) \; .
    \]
    It is known that symmetric gauge functions are monotone \citep{bhatia97}, that is, if $\lambda^\downarrow(A) \preceq \lambda^\downarrow(B)$  then 
    \[
    \|A\|_\Phi = \Phi(\lambda^\downarrow(A)) \leq \Phi(\lambda^\downarrow(B)) = \|B\|_\Phi \; ,
    \]
    where the equalities follow from the permutation invariance property of $\Phi$.
    For $A, B \in \pd$ the weighted geometric mean satisfies~\cite[Exercise 6.5.6]{bhatia07positivedefinitematrices}: 
    \[
    A \#_t B \preceq (1-t) A+t B \qquad \text { for } t \in[0,1].
    \]
    Recall that if $A \succeq B$ then $\lambda^\downarrow(A) \succeq \lambda^\downarrow(B)$ which follows from the min-max theorem:

    \[
    \lambda_k(A)=\min _{\substack{U \subset \mathbb{C}^n \\ \operatorname{dim}(U)=k}} \max _{x \in U \backslash\{0\}} \frac{x^{\top} A x}{x^{\top} x} \geq \min _{\substack{U \subset \mathbb{C}^n \\ \operatorname{dim}(U)=k}} \max _{x \in U \backslash\{0\}} \frac{x^{\top} B x}{x^{\top} x}=\lambda_k(B),
    \]
    for $k \in [d]$ where the inequality follows from the fact that $A \succeq B \implies A-B \succeq 0$, that is $x^\top (A-B) x \geq 0$ for all vectors $x \neq 0$.
    Hence setting $t=1/2$ in the geometric mean we have 
    \begin{equation}\label{eq:geomean_ineq}
        A \# B \preceq \frac{A + B}{2} \implies \lambda^\downarrow(A \# B) \preceq \lambda^\downarrow \left( \frac{A+B}{2}\right).
    \end{equation}
    Thus using \eqref{eq:geomean_ineq} and applying the monotonicity and permutation invariance of $\Phi$ we get
    \[
    \Phi(\lambda(A \# B)) = \Phi(\lambda^\downarrow(A \# B)) \leq \Phi\left( \lambda^\downarrow \left( \frac{A+B}{2}  \right) \right) = \Phi\left( \lambda \left( \frac{A+B}{2}  \right) \right). 
    \]
    Moreover, Exercise~II.1.14~\citep{bhatia97} implies 
    \begin{equation}\label{eq:submaj_AB}
        \lambda^\downarrow \left(\frac{A+B}{2}\right) \prec_w \lambda^\downarrow \left(\frac{A}{2}\right) + \lambda^\downarrow \left(\frac{B}{2}\right) \; .
    \end{equation}
    We further know that $\Phi$ satisfies the \textit{strongly isotone} property (see~\cite[page~45]{bhatia97}), i.e., 
    \[
    x \prec_w y  \implies \Phi(x) \leq \Phi(y) \qquad \forall x,y \in \reals_+^n.
    \]
    Applying permutation invariance of $\Phi$ with \eqref{eq:submaj_AB} gives
    \begin{equation}\label{eq:weak_maj}
        \Phi\left(\lambda\left(\frac{A+B}{2}\right) \right) = \Phi\left(\lambda^\downarrow \left(\frac{A+B}{2}\right) \right)  \leq \Phi\left(\lambda^\downarrow\left(\frac{A}{2}\right) + \lambda^\downarrow \left(\frac{B}{2}\right) \right).
    \end{equation}
    Finally, we have for all $A, B \in \pd$,
    \begin{align*}
        \begin{split}
            \|A \# B\|_\Phi &= \Phi(\lambda(A \# B)) 
            \\& \leq \Phi\left(\lambda\left( \frac{A+B}{2}\right)\right) \qquad \text{(Applying monotonicity of } \Phi \text{ to \eqref{eq:geomean_ineq}})
            \\& \leq \Phi\left(\lambda^\downarrow\left(\frac{A}{2}\right) + \lambda^\downarrow\left(\frac{B}{2}\right) \right) \qquad \text{(Apply \eqref{eq:weak_maj})}
            \\&= \Phi\left(\frac{1}{2}\lambda^\downarrow(A) + \frac{1}{2}\lambda^\downarrow(B)\right) \qquad (\text{Property of eigenvalues: }\lambda(A/2) = \frac{1}{2}\lambda(A))
            \\&\leq \frac{\Phi(\lambda^\downarrow(A)) + \Phi(\lambda^\downarrow(B))}{2} \qquad (\Phi \text{ is a norm; triangle inequ., pos. homogeneity})
    \\&=\frac{\Phi(\lambda(A)) + \Phi(\lambda(B))}{2} \qquad \text{(Remove $\downarrow$ by permutation invariance of $\Phi$)}
    \\&\defas \frac{\|A\|_\Phi + \|B\|_\Phi}{2} \; ,
        \end{split} 
    \end{align*}
    which proves the midpoint criterion for g-convexity.
\end{proof}
This implies that regularizing a g-convex objective with a symmetric gauge function (or its corresponding unitarily invariant norm) will maintain g-convexity. Moreover, since symmetric gauge functions are closed under positive scaling, we can always select a hyperparameter $\beta > 0$ to control the regularity of $\Phi$. 

\begin{rmk}
    We remark that functions that are convex with respect to the Euclidean metric may not be g-convex with respect to the Riemannian metric and vice-versa. For example, let $y_1, \ldots, y_n \in \reals^d$ be non-zero vectors and define the functions $f,g:\pd \to \reals$ by
    \[
    f(X) \defas \sum_{ij=1}^d |X_{ij}| \qquad \text{and} \qquad g(X) \defas \sum_{i=1}^n \log \left(y_i^\top X y_i\right).
    \]
    $f(X)$ is Euclidean convex since it is a norm but it is not g-convex with respect to the Riemannian metric (see~\cite[Proposition 11]{cheng2024disciplinedgeodesicallyconvexprogramming}). On the other hand, $g(X)$ is g-convex with respect to the Riemannian metric but not Euclidean convex (see~\cite[Proposition 12]{cheng2024disciplinedgeodesicallyconvexprogramming}).
\end{rmk}

\subsubsection{Properties of Sparsity Regularizers}
The following result shows that symmetric gauge functions are closed under a $\ell_p$\textit{-transformation}, which provides us with additional flexibility to tune its regularity. 
    \begin{prop}\label{prop:lp_sgf}
        Let $\Phi$ be a symmetric gauge function and $p \geq 1$. Define the $\ell_p$-transformation of $\Phi$ to be
        \[
        \Phi^{(p)}(x) \defas \left[ \Phi(|x|^p) \right]^{1/p}.
        \]
        Then $\Phi^{(p)}$ is also a symmetric gauge function.
    \end{prop}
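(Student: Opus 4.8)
The plan is to verify the three defining properties of a symmetric gauge function for $\Phi^{(p)}$, noting that two of them are inherited almost immediately from $\Phi$ and that the real content lies in showing $\Phi^{(p)}$ is a norm. For the symmetric property, observe that applying a permutation $P$ to $x$ permutes the entries of $|x|^p$ in exactly the same way, so $\Phi(|Px|^p) = \Phi(P|x|^p) = \Phi(|x|^p)$ by the symmetry of $\Phi$, and taking $p$-th roots gives $\Phi^{(p)}(Px) = \Phi^{(p)}(x)$. The gauge-invariant (absolute) property is even simpler: the entrywise absolute value $|\cdot|$ annihilates the signs $\alpha_k \in \{\pm 1\}$ before $\Phi$ is ever applied. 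I would dispatch both of these in a line each.

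It then remains to check that $\Phi^{(p)}$ is a norm. Nonnegativity and definiteness are immediate, since $\Phi(|x|^p) \ge 0$ with equality if and only if $|x|^p = 0$, i.e. $x = 0$. Absolute homogeneity follows from the positive homogeneity of the norm $\Phi$: for a scalar $\lambda$ one has $\Phi^{(p)}(\lambda x) = [\Phi(|\lambda|^p |x|^p)]^{1/p} = [\,|\lambda|^p \Phi(|x|^p)\,]^{1/p} = |\lambda|\,\Phi^{(p)}(x)$.

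The only substantive step, and the one I expect to be the main obstacle, is the triangle inequality. Rather than attempt a direct Minkowski-type estimate, I would show that the unit ball $B \defas \{x \in \reals^d : \Phi(|x|^p) \le 1\}$ is convex; combined with positive homogeneity this yields subadditivity by the standard rescaling argument (for nonzero $x,y$ one applies convexity of $B$ to $x/\Phi^{(p)}(x)$ and $y/\Phi^{(p)}(y)$ with weight $\Phi^{(p)}(x)/(\Phi^{(p)}(x)+\Phi^{(p)}(y))$, so that the convex combination equals $(x+y)/(\Phi^{(p)}(x)+\Phi^{(p)}(y))$). To prove $B$ is convex, I would fix $x,y \in B$ and $\theta \in [0,1]$ and run a three-ingredient chain. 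First, scalar estimates entrywise: the triangle inequality $|\theta x_i + (1-\theta)y_i| \le \theta|x_i| + (1-\theta)|y_i|$ followed by convexity of $t \mapsto t^p$ on $[0,\infty)$ (valid since $p \ge 1$) give $|\theta x + (1-\theta)y|^p \preceq \theta|x|^p + (1-\theta)|y|^p$ in the entrywise order on $\reals^d_+$. Second, the monotonicity of symmetric gauge functions (already invoked in the proof that unitarily invariant norms are g-convex) upgrades this to $\Phi(|\theta x + (1-\theta)y|^p) \le \Phi(\theta|x|^p + (1-\theta)|y|^p)$. Third, the triangle inequality and positive homogeneity of $\Phi$ bound the right-hand side by $\theta\,\Phi(|x|^p) + (1-\theta)\,\Phi(|y|^p) \le \theta + (1-\theta) = 1$, so $\theta x + (1-\theta)y \in B$.

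The crux, then, is recognizing that the triangle inequality need not be attacked head-on: the interplay between convexity of $t \mapsto t^p$ at the scalar level and monotonicity of $\Phi$ at the vector level reduces it to convexity of the unit ball, which falls out cleanly. Having established the norm axioms together with the symmetric and gauge-invariant properties, I would conclude that $\Phi^{(p)}$ is a symmetric gauge function, as claimed.
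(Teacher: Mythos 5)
Your proof is correct, and on the one substantive point it takes a genuinely different route from the paper. The paper's proof is essentially a two-line affair: symmetry, gauge invariance, homogeneity and definiteness are read off from the definition (just as you do), and the triangle inequality is obtained by citing Minkowski's inequality for symmetric gauge functions (Theorem IV.1.8 of Bhatia). You instead prove the triangle inequality from scratch: you reduce subadditivity to convexity of the unit ball $\{x \in \reals^d : \Phi(|x|^p) \le 1\}$ via the standard rescaling argument, and you establish that convexity by combining the entrywise estimate $|\theta x + (1-\theta)y|^p \le \theta|x|^p + (1-\theta)|y|^p$ (scalar triangle inequality plus convexity of $t \mapsto t^p$ for $p \ge 1$) with the monotonicity of $\Phi$ on nonnegative vectors and then the norm properties of $\Phi$. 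This amounts to a self-contained proof of the very result the paper cites. The one ingredient you import, monotonicity of $\Phi$, is legitimately available: the paper itself invokes the strongly isotone property of symmetric gauge functions ($x \prec_w y \implies \Phi(x) \le \Phi(y)$ for nonnegative vectors) in its proof that unitarily invariant norms are g-convex, and entrywise domination between nonnegative vectors implies weak submajorization, so your appeal to monotonicity is covered by facts already on the table. As for what each approach buys: the paper's citation is shorter and defers the analytic content to a standard reference, while your argument is more elementary and self-contained and makes visible exactly where the hypothesis $p \ge 1$ enters (the convexity of $t \mapsto t^p$), at the cost of some length.
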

\begin{proof}
    Gauge invariance and symmetry follows directly from the definition of $\Phi^{(p)}$. It remains to show that $\Phi^{(p)}$ is a norm. The absolute homogeneity and positive definiteness property of $\Phi^{(p)}$ follows directly from definition. The triangle inequality directly follows by applying  Minkowski's inequality for symmetric gauge functions~\cite[Theorem IV.1.8]{bhatia97}.
\end{proof}
Thus we can write Problem~\ref{eq:sparsity_reg_problem} as the following problem with more fine-grained regularity on the sparsity. For any $p \geq 1$ and $\beta > 0$, we can rewrite Problem~\ref{eq:sparsity_reg_problem} as
\begin{equation}\label{eq:structure_1}
    \hat{\phi}(X) = \argmin_{X \in \pd } \phi(X) + \beta \Phi^{(p)}(\lambda(X))    \; ,
\end{equation}
which is g-convex.  We note that if $\phi$ is g-convex and DC, then so is $\hat{\phi}$ in \eqref{eq:structure_1}. However, if $\phi$ is not DC then $\hat{\phi}$ is not necessarily DC. In that case, we may construct a symmetric gauge function regularizer to induce DC structure and sparse solutions (see Proposition~\ref{prop:diagonal_loading} below for an example).

\subsubsection{Properties of Ball Constraint Regularizers}
Similarly, we have the following desirable algorithmic properties of $d_\Phi$.

    \begin{prop}[Proposition 3.5~\citep{LIM2013115}]\label{prop:dphi_gcvx}
        Let $\Phi \in \SG$ be a symmetric gauge function and $d_\Phi: \pd \times \pd \to \reals$ be the corresponding metric. Then 
        \begin{enumerate}
            \item Every $d_\Phi$-ball is geodesically convex in $\pd$.
            \item The map $d_\Phi^\alpha(\cdot, Z): \pd \to \reals$ is geodesically convex for any $\alpha \geq 1.$
        \end{enumerate}
    \end{prop}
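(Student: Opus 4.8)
The plan is to derive both claims from property~4 of Theorem~\ref{theorem:dphi_properties}, which is the only geodesic-convexity-type inequality we need. The key observation is that specializing that property to $C=D=Z$ immediately yields geodesic convexity of the distance function $d_\Phi(\cdot,Z)$; everything else follows from standard facts about g-convex functions. Accordingly, I would prove part~2 first (at least the base case $\alpha=1$) and then recover part~1 as a corollary, even though the proposition lists them in the opposite order.

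First I would establish part~2 in the base case $\alpha=1$. Fix $Z\in\pd$ and arbitrary $A,B\in\pd$, and consider the geodesic $\gamma(t)=A\#_t B$ given by~\eqref{eq:intro_gcvx_def}. Since $Z\#_t Z=Z$ for every $t\in[0,1]$, applying property~4 of Theorem~\ref{theorem:dphi_properties} with $C=D=Z$ gives
\[
d_\Phi(A\#_t B,\,Z)=d_\Phi(A\#_t B,\,Z\#_t Z)\le (1-t)\,d_\Phi(A,Z)+t\,d_\Phi(B,Z),
\]
which is precisely the segment criterion for geodesic convexity of $d_\Phi(\cdot,Z)$.

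To extend to general $\alpha\ge 1$, I would compose with the scalar map $s\mapsto s^\alpha$. This map is convex and nondecreasing on $\reals_+$, and $d_\Phi(\cdot,Z)$ is nonnegative and (by the previous step) g-convex; hence along any geodesic $\gamma$ the function $t\mapsto d_\Phi(\gamma(t),Z)^\alpha$ is the composition of a convex nondecreasing scalar function with a convex function of $t$, and is therefore convex. This proves part~2. Part~1 then follows as a corollary: a $d_\Phi$-ball $\{X:d_\Phi(X,Z)\le r\}$ is a sublevel set of the g-convex function $d_\Phi(\cdot,Z)$, and sublevel sets of g-convex functions on the (g-convex) domain $\pd$ are g-convex — indeed, if $A,B$ lie in the set, the displayed inequality bounds $d_\Phi(A\#_t B,Z)$ by $(1-t)r+tr=r$, so the whole geodesic stays in the set.

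I expect no serious obstacle: the real content is packaged inside Theorem~\ref{theorem:dphi_properties}(4), and recognizing the substitution $C=D=Z$ (together with $Z\#_t Z=Z$) is essentially the whole trick. The only point requiring a little care is the composition step for $\alpha>1$, where monotonicity of $s\mapsto s^\alpha$ is essential, since composing a convex outer function with a convex inner function need not preserve convexity unless the outer function is nondecreasing.
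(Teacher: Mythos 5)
Your proposal is correct. Note that the paper itself gives no proof of this proposition---it is imported verbatim as Proposition~3.5 of \citet{LIM2013115}---so there is no in-paper argument to compare against; your derivation is a valid self-contained reconstruction, and it in fact uses the standard route: the entire content sits in the joint-convexity inequality $d_{\Phi}(A\#_t B, C\#_t D)\le(1-t)d_{\Phi}(A,C)+t\,d_{\Phi}(B,D)$ of Theorem~\ref{theorem:dphi_properties}(4), specialized at $C=D=Z$ with $Z\#_t Z=Z$. Two small remarks. First, your conclusion that the endpoint inequality ``is precisely the segment criterion'' silently uses the fact that a sub-segment of a geodesic is again a geodesic, i.e., $\gamma(t_1)\#_s\gamma(t_2)=\gamma((1-s)t_1+st_2)$; since your inequality holds for \emph{arbitrary} endpoints $A,B$, applying it with $A=\gamma(t_1)$, $B=\gamma(t_2)$ upgrades the endpoint bound to genuine convexity of $t\mapsto d_{\Phi}(\gamma(t),Z)$ on $[0,1]$ (this reparametrization is exactly property~3 of the same theorem). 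Second, your composition step for $\alpha>1$ is precisely an instance of the paper's own rule, Proposition~\ref{prop:cvx_gcvx_composition} (g-convex inner function composed with a nondecreasing Euclidean convex outer function), applied with $h(s)=s^{\alpha}$ on $\reals_+$; you correctly flag that monotonicity of $h$ is the point that cannot be dropped. Your observation that part~1 is a corollary of part~2 (sublevel sets of g-convex functions are g-convex) is also the natural ordering, and the reversal relative to the statement is harmless.
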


\noindent Proposition~\ref{prop:dphi_gcvx} implies that our ball constraint regularizer preserves desirable properties: For a g-convex and DC objective $\phi: \pd \to \reals$ and an appropriate choice of $\beta >0$ and $\alpha \geq 1$, the regularized problem 
\[
\argmin_{X \in \pd} f(X) + \beta d_\Phi^\alpha(X, Z) 
\]
is g-convex and DC, too. If DC structure is not present, one can use a regularizer that encodes the S-divergence, a function of symmetric gauge functions, to induce DC structure while retaining g-convexity (see Example~\ref{ex:sdiv} for more details).

\subsection{Designing new regularizers from symmetric gauge functions}\label{section:generating_reg_SG}
While the sparsity and ball constraints discussed above allow for regularizing many common constrained problem, the idea of leveraging symmetric gauge functions as regularizers applies more broadly. In this section we discuss how to build a rich class of regularizers for a wide range of constrained problems on the SPD manifold, using symmetric gauge functions and their convexity-preserving properties.

\subsubsection{Disciplined Geodesically Convex Programming with symmetric gauge functions}

\emph{Disciplined Geodesically Convex Programming} (short: DGCP) \citep{cheng2024disciplinedgeodesicallyconvexprogramming} is a framework for testing and verifying the geodesic convexity of nonlinear programs on Cartan-Hadamard manifolds.
DGCP presents a set of g-convex functions (\emph{atoms})  
and g-convex preserving operations (\emph{rules}) for the $\pd$ manifold. To test and verify convexity, a function is expressed in terms of atoms using only convexity preserving operations. Below, we discuss g-convex preserving operations on the symmetric gauge functions, which can be used to design new g-convex regularizers on $\pd$.

\paragraph{Rules}
\begin{prop}
    Let $S \subseteq \pd$ be a g-convex subset. Suppose the functions $f_i: S \to \reals$ are g-convex for $i= 1, \ldots, n$. Then the following functions are g-convex

    \begin{enumerate}
        \item $f(X) =  \max_{i \in \{1, \ldots, n\}} f_i(X)$
        \item $g(X) = \sum_{i=1}^n \alpha_i f_i(X)$ for $\alpha_1 \ldots, \alpha_n \geq 0$.
    \end{enumerate}
\end{prop}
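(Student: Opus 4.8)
The plan is to reduce both claims directly to the definition of geodesic convexity of functions, which characterizes g-convexity by the ordinary (Euclidean) convexity of the one-dimensional restrictions $\phi \circ \gamma : [0,1] \to \reals$ along every geodesic segment $\gamma$ whose image lies in $S$. Since $S$ is assumed g-convex, the domain requirement is automatically satisfied for both $f$ and $g$, so in each case it only remains to verify convexity of the composition with an arbitrary geodesic.

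First I would fix an arbitrary geodesic segment $\gamma : [0,1] \to \pd$ with $\gamma([0,1]) \subseteq S$ and $\gamma(0) \neq \gamma(1)$, given in the form \eqref{eq:intro_gcvx_def}. By hypothesis each $f_i$ is g-convex, so each restriction $t \mapsto (f_i \circ \gamma)(t)$ is a Euclidean convex function on $[0,1]$. For part (1), I would observe that $(f \circ \gamma)(t) = \max_{i} (f_i \circ \gamma)(t)$, i.e. the composition commutes with the pointwise maximum, and then invoke the classical fact that the pointwise maximum of finitely many Euclidean convex functions on an interval is Euclidean convex. Hence $f \circ \gamma$ is convex, and since $\gamma$ was arbitrary, $f$ is g-convex.

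For part (2), I would similarly note that $(g \circ \gamma)(t) = \sum_{i=1}^n \alpha_i (f_i \circ \gamma)(t)$ with all $\alpha_i \geq 0$, and use that a nonnegative linear combination of Euclidean convex functions is Euclidean convex. This yields convexity of $g \circ \gamma$ for every admissible $\gamma$, establishing g-convexity of $g$.

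There is no substantive obstacle here: the entire content is the observation that composition with a fixed geodesic is a linear operation that commutes with both $\max$ and nonnegative summation, so the two standard Euclidean convexity-preservation results transfer verbatim to the geodesic setting. The only point that warrants a line of care is confirming that the domain $S$ being g-convex guarantees that the composed functions are defined along every relevant geodesic, so that the definition of g-convexity applies without further hypotheses.
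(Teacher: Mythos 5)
Your proof is correct: reducing both claims to Euclidean convexity of the restrictions $f_i \circ \gamma$ along an arbitrary geodesic $\gamma$ with image in $S$, and then invoking closure of Euclidean convex functions under pointwise maximum and nonnegative linear combination, is exactly the right argument, and you correctly note that g-convexity of $S$ handles the domain condition in the definition. The paper itself states this proposition without proof (as one of its DGCP ``rules''), so there is nothing to compare against; your argument is the standard one the paper implicitly relies on, and it fills that gap with no missing steps.
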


The classes of positive linear and affine maps will be useful in identifying and constructing g-convex functions.

\begin{defn}[Positive Linear Map~\citep{bhatia07positivedefinitematrices}]
    A linear map $\Phi:\mathbb{P}_d \to \mathbb{P}_m$ is \textit{positive} when $A \succeq 0$ implies $\Phi(A) \succeq 0$ for all $A \in \mathbb{P}_m$. We say that $\Phi$ is \textit{strictly positive} when $A \succ 0$ implies that $\Phi(A) \succ 0$.
\end{defn}

\begin{defn}[Positive Affine Map~\citep{cheng2024disciplinedgeodesicallyconvexprogramming}]
    Let $B \succeq 0$ be a fixed symmetric positive semi-definite matrix and $\Phi: \pd \to \pd$ a positive linear operator. Then the function $\phi:\pd \to \pd$ defined by $\phi(X) \defas \Phi(X) + B $ is a \textit{positive affine operator}.
\end{defn}
Notably, strictly positive linear maps are g-convex.
\begin{prop}[Proposition 5.8~\citep{Vishnoi2018GeodesicCO}]\label{prop:strict_positive_linear}
    Let $\Phi(X)$ be a strictly positive linear operator from $\mathbb{P}_d$ to $\mathbb{P}_m$. Then $\Phi(X)$ is g-convex with respect to the Löwner order on $\mathbb{P}_m$ over $\mathbb{P}_d$ with respect to the canonical Riemannian inner product $g_X(U, V)\defas$ $\operatorname{tr}\left[X^{-1} U X^{-1} V\right]$. In other words, for any geodesic $\gamma:[0,1] \rightarrow \mathbb{P}_d$ we have that
$$
\Phi(\gamma(t)) \preceq(1-t) \Phi(\gamma(0))+t \Phi(\gamma(1)) \quad \forall t \in[0,1] \; .
$$
\end{prop}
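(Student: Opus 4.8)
The plan is to prove the Löwner-order inequality directly for every $t \in [0,1]$, exploiting the linearity and positivity of $\Phi$ together with the explicit form of the geodesic. Write $A \defas \gamma(0)$, $B \defas \gamma(1)$, and $C \defas A^{-1/2} B A^{-1/2} \succ 0$, so that the geodesic formula \eqref{eq:intro_gcvx_def} reads $\gamma(t) = A^{1/2} C^t A^{1/2}$, with $A = A^{1/2} C^0 A^{1/2}$ and $B = A^{1/2} C^1 A^{1/2}$. Since $\Phi$ is linear, the desired inequality $\Phi(\gamma(t)) \preceq (1-t)\Phi(A) + t\Phi(B)$ is equivalent to $\Phi(D) \succeq 0$, where $D \defas (1-t)A + tB - \gamma(t) = A^{1/2}\bigl((1-t)I + tC - C^t\bigr)A^{1/2}$. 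Thus the whole statement reduces to showing that $\Phi$ maps the single matrix $D$ into the positive semidefinite cone.

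First I would record the elementary scalar inequality $s^t \le (1-t) + ts$ for all $s > 0$ and $t \in [0,1]$, which is immediate from weighted AM--GM (or because $s \mapsto s^t$ is concave and $(1-t)+ts$ is its tangent line at $s=1$). Next I would lift it to the operator inequality $C^t \preceq (1-t)I + tC$. The key simplification here is that $C^t$, $I$, and $C$ are all functions of the single matrix $C$ and hence mutually commute; diagonalizing $C$ collapses the Löwner comparison to the scalar inequality applied eigenvalue by eigenvalue. This is precisely the step that lets us avoid invoking the full operator concavity of $X \mapsto X^t$ (Löwner--Heinz), which would otherwise be required and is the only point where a genuine difficulty could enter.

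With $(1-t)I + tC - C^t \succeq 0$ established, congruence by the symmetric factor $A^{1/2}$ preserves positive semidefiniteness, so $D \succeq 0$. Applying $\Phi$ then gives $\Phi(D) \succeq 0$ because $\Phi$ sends the positive semidefinite cone into itself (this follows from strict positivity by the limiting argument $\Phi(D) = \lim_{\varepsilon \to 0^+} \Phi(D + \varepsilon I) \succeq 0$), and expanding by linearity yields $(1-t)\Phi(A) + t\Phi(B) - \Phi(\gamma(t)) \succeq 0$, which is the claim; strict positivity further guarantees each $\Phi(\gamma(t)) \in \mathbb{P}_m$, so this is a bona fide g-convexity statement on the target manifold. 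I expect no serious obstacle beyond recognizing that the congruence-plus-commuting-functions reduction turns the operator statement into a one-dimensional one, and that mere positivity of $\Phi$ suffices once the difference has been reduced to the single matrix $D$.
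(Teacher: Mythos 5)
Your proof is correct. The paper does not actually prove this proposition---it imports it from Vishnoi's notes---but your argument coincides with the standard (and cited) one: reduce everything to the weighted operator AM--GM inequality $\gamma(t) = A \#_t B \preceq (1-t)A + tB$ and push it through the positive linear map; indeed the paper invokes exactly this operator inequality (citing Bhatia, Exercise~6.5.6) in its own proof that unitarily invariant norms are g-convex. Your self-contained derivation of that inequality is sound---since $I$, $C$, and $C^t$ are simultaneously diagonalizable, the L\"owner comparison collapses to the scalar bound $s^t \le (1-t)+ts$ applied eigenvalue by eigenvalue, and congruence by $A^{1/2}$ preserves positive semidefiniteness---and your limiting argument $\Phi(D) = \lim_{\varepsilon \to 0^+}\Phi(D+\varepsilon I) \succeq 0$ correctly extends strict positivity to the closed PSD cone, which is needed since $D$ is singular at $t \in \{0,1\}$ and wherever $C$ has a unit eigenvalue.
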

We can further leverage the following g-convexity preserving compositions. 
\begin{prop}[Proposition 7~\citep{cheng2024disciplinedgeodesicallyconvexprogramming}]\label{prop:gcvx_affine_positive}

    Let  $\phi(X) \defas \Phi(X) + B$ where $\Phi(X)$ is a positive linear map and $B \succeq 0$. 
        Let $f: \pd \to \mathbb{P}_m$ be g-convex and monotonically increasing, i.e., $f(X) \preceq f(Y)$ whenever $X \preceq Y$. Then the function
        $g(X) \defas f\left( \phi(X)\right)$
        is g-convex.
\end{prop}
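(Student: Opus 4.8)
The plan is to verify g-convexity of $g$ directly from the weighted criterion used in Proposition~\ref{prop:strict_positive_linear}: fixing $X, Y \in \pd$ and the connecting geodesic $\gamma(t) = X \#_t Y$, I want to establish
\[
g(X \#_t Y) = f\bigl(\phi(X \#_t Y)\bigr) \preceq (1-t)\,f(\phi(X)) + t\,f(\phi(Y)) \qquad \forall t \in [0,1].
\]
This is the familiar disciplined-convex-programming composition ``g-convex $\circ$ (affine, monotone)'', so the template is to push the geodesic through $\phi$, then apply monotonicity, and finally g-convexity of $f$. The essential subtlety --- and the step I expect to be the main obstacle --- is that composing with a \emph{merely} g-convex (rather than Euclidean-convex) outer map $f$ forces me to control $\phi$ along geodesics in the \emph{codomain} as well: I will need the ``geodesic-to-geodesic'' bound
\[
\phi(X \#_t Y) \preceq \phi(X) \#_t \phi(Y),
\]
which is strictly stronger than the Euclidean-chord bound $\phi(\gamma(t)) \preceq (1-t)\phi(X) + t\phi(Y)$ furnished by Proposition~\ref{prop:strict_positive_linear}. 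The weaker bound is in fact insufficient, since combining it with monotonicity only yields $f(\phi(\gamma(t))) \preceq f\bigl((1-t)\phi(X)+t\phi(Y)\bigr)$, whereas g-convexity controls $f$ on the geodesic point $\phi(X)\#_t\phi(Y) \preceq (1-t)\phi(X)+t\phi(Y)$, i.e.\ on the wrong side of the arithmetic--geometric mean inequality.

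To establish the geodesic-to-geodesic bound I will decompose $\phi = \Phi(\cdot) + B$ and treat the linear and constant parts separately. For the linear part I invoke Ando's inequality for positive linear maps, $\Phi(X \#_t Y) \preceq \Phi(X) \#_t \Phi(Y)$ (see, e.g., \citep[Ch.~IV]{bhatia97}), which is precisely the operator-geometric-mean subadditivity of $\Phi$. For the constant part I use the joint concavity (equivalently, superadditivity) of the weighted geometric mean: splitting each argument into the summands $\Phi(X),\Phi(Y)$ and $B,B$ gives
\[
\bigl(\Phi(X) + B\bigr) \#_t \bigl(\Phi(Y) + B\bigr) \;\succeq\; \Phi(X) \#_t \Phi(Y) + B \#_t B \;=\; \Phi(X)\#_t\Phi(Y) + B,
\]
using $B \#_t B = B$. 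Chaining the two displays yields
\[
\phi(X \#_t Y) = \Phi(X \#_t Y) + B \preceq \Phi(X)\#_t\Phi(Y) + B \preceq \phi(X) \#_t \phi(Y),
\]
which is the desired bound. (Here I tacitly take $\Phi$ strictly positive, so that $\phi(X), \phi(Y) \in \pd$ and the codomain geometric means are well defined; this is exactly the regime in which $g = f\circ\phi$ is a genuine map $\pd \to \mathbb{P}_m$.)

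Finally I assemble the pieces. Monotonicity of $f$ applied to the geodesic-to-geodesic bound gives $f(\phi(X \#_t Y)) \preceq f\bigl(\phi(X) \#_t \phi(Y)\bigr)$, and g-convexity of $f$ along the geodesic joining $\phi(X)$ to $\phi(Y)$ gives $f\bigl(\phi(X)\#_t\phi(Y)\bigr) \preceq (1-t)f(\phi(X)) + t\,f(\phi(Y))$. Combining the two inequalities produces exactly the weighted g-convexity criterion for $g$, completing the argument. The only genuinely nontrivial ingredient is the geodesic-to-geodesic estimate of the middle paragraph; once Ando's inequality and the joint concavity of $\#_t$ are in hand, the remaining steps follow immediately from the hypotheses on $f$.
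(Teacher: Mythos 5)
Your proof is correct and, modulo the fact that this paper imports the proposition from \citet{cheng2024disciplinedgeodesicallyconvexprogramming} without reproducing a proof, it takes essentially the same route as the cited source: you correctly identify that the Euclidean-chord bound from Proposition~\ref{prop:strict_positive_linear} is insufficient and that the crux is the geodesic-to-geodesic bound $\phi(X \#_t Y) \preceq \phi(X) \#_t \phi(Y)$, which you obtain from Ando's inequality for positive linear maps together with superadditivity (joint concavity plus positive homogeneity) of the weighted geometric mean and $B \#_t B = B$, after which monotonicity and g-convexity of $f$ close the argument. One small correction: Ando's inequality $\Phi(A \#_t B) \preceq \Phi(A) \#_t \Phi(B)$ is found in \citet[Ch.~4]{bhatia07positivedefinitematrices} rather than in \citep[Ch.~IV]{bhatia97}, which treats symmetric gauge functions; your parenthetical remark that $\Phi$ should be strictly positive (so that $\phi$ genuinely maps into $\pd$ and the codomain geometric means are defined) is exactly the regime built into the paper's definition of a positive affine map.
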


\begin{prop}[Proposition 2~\citep{cheng2024disciplinedgeodesicallyconvexprogramming}]\label{prop:cvx_gcvx_composition}
    Suppose $f: \pd \to \reals$ is g-convex. If $h: \reals\to \reals$ is nondecreasing and Euclidean convex, then $h \circ f$ is g-convex on $\pd.$
\end{prop}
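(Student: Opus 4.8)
The plan is to reduce geodesic convexity to ordinary Euclidean convexity along geodesics, and then invoke the classical one-dimensional composition rule that a nondecreasing convex function composed with a convex function is convex. By the definition of geodesic convexity of functions, it suffices to fix an arbitrary geodesic segment $\gamma:[0,1] \to \pd$ (of the form \eqref{eq:intro_gcvx_def}) and verify that $(h \circ f) \circ \gamma$ is Euclidean convex on $[0,1]$.

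First I would set $g \defas f \circ \gamma : [0,1] \to \reals$. Since $f$ is g-convex by hypothesis and $\gamma$ is a geodesic, $g$ is Euclidean convex by definition. Observe that $(h \circ f)\circ \gamma = h \circ (f \circ \gamma) = h \circ g$, so the entire claim collapses to showing that the scalar function $h \circ g$ is convex on $[0,1]$. Second, I would verify this scalar composition inequality directly: for $t,s \in [0,1]$ and $\lambda \in [0,1]$, convexity of $g$ gives $g(\lambda t + (1-\lambda)s) \leq \lambda g(t) + (1-\lambda)g(s)$; applying the nondecreasing map $h$ preserves this inequality, and then convexity of $h$ bounds the right-hand side by $\lambda\, h(g(t)) + (1-\lambda)\, h(g(s))$. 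Chaining the two steps yields $h(g(\lambda t + (1-\lambda)s)) \leq \lambda\, h(g(t)) + (1-\lambda)\, h(g(s))$, which is exactly the convexity of $h \circ g$, and hence the g-convexity of $h \circ f$.

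The step requiring the most care is the use of monotonicity: it is precisely the nondecreasing hypothesis on $h$ that permits applying $h$ to both sides of the inner convexity inequality without reversing its direction, and the argument genuinely fails without it. Since $h:\reals \to \reals$ is defined on all of $\reals$, no domain-matching subtleties arise (the range of $f$ lies in $\reals$), so monotonicity together with convexity of $h$ and g-convexity of $f$ are the only load-bearing hypotheses. Everything else is a direct unfolding of the definition of geodesic convexity of functions along $\gamma$.
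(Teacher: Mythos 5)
Your proposal is correct: restricting to a geodesic, setting $g = f \circ \gamma$, and invoking the scalar composition rule (nondecreasing convex $\circ$ convex is convex) is exactly the standard argument, and your explicit chaining of the monotonicity and convexity inequalities is sound. The paper itself states this result without proof, citing it as Proposition~2 of the DGCP reference, and your argument is precisely the canonical proof used there, so there is nothing to flag.
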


Finally, the variable substitution $X = X^{-1}$ preserves g-convexity.

\begin{prop}[\cite{cheng2024disciplinedgeodesicallyconvexprogramming}]\label{lemma:inverse_gcvx}
Let $f: \pd \to \reals$ be g-convex.
Then $g(X) = f(X^{-1})$ is also g-convex.

\end{prop}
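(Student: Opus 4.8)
The plan is to show that matrix inversion $\iota(X) \defas X^{-1}$ maps geodesics of $\pd$ to geodesics while preserving the time parametrization, and then to transfer the g-convexity of $f$ to $g = f \circ \iota$ along each geodesic. This reduces everything to a single algebraic identity about the weighted geometric mean.

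First I would fix $A, B \in \pd$ and consider the geodesic $\gamma(t) = A^{1/2}(A^{-1/2}BA^{-1/2})^t A^{1/2} = A \#_t B$ from \eqref{eq:intro_gcvx_def}. Writing $C \defas A^{-1/2}BA^{-1/2} \succ 0$, a direct computation gives $\gamma(t)^{-1} = A^{-1/2}C^{-t}A^{-1/2}$. On the other hand, applying the geodesic formula \eqref{eq:intro_gcvx_def} to the endpoints $A^{-1}$ and $B^{-1}$ and using $(A^{-1})^{\pm 1/2} = A^{\mp 1/2}$ yields the geodesic $\tilde\gamma(t) = A^{-1/2}(A^{1/2}B^{-1}A^{1/2})^t A^{-1/2}$. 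Since $A^{1/2}B^{-1}A^{1/2} = (A^{-1/2}BA^{-1/2})^{-1} = C^{-1}$, this simplifies to $\tilde\gamma(t) = A^{-1/2}C^{-t}A^{-1/2} = \gamma(t)^{-1}$. Hence $(A \#_t B)^{-1} = A^{-1} \#_t B^{-1}$ for all $t \in [0,1]$; equivalently, $\iota \circ \gamma$ is precisely the geodesic joining $A^{-1}$ and $B^{-1}$, traversed with the same parameter $t$.

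With this geodesic-preservation identity established, g-convexity of $g$ follows immediately. Let $\gamma:[0,1] \to \pd$ be any geodesic segment with $\gamma(0) \neq \gamma(1)$ and endpoints $A, B$. Then $g \circ \gamma = f \circ (\iota \circ \gamma) = f \circ \tilde\gamma$, where $\tilde\gamma$ is the geodesic from $A^{-1}$ to $B^{-1}$; note $\tilde\gamma(0) \neq \tilde\gamma(1)$ because $\iota$ is a bijection on $\pd$. Since $f$ is g-convex, $f \circ \tilde\gamma$ is (Euclidean) convex on $[0,1]$, and therefore so is $g \circ \gamma$. As the geodesic was arbitrary, $g$ is g-convex on $\pd$.

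The only nonroutine step is the identity $(A \#_t B)^{-1} = A^{-1} \#_t B^{-1}$, which is the manifestation that inversion is an isometry of the affine-invariant geometry (compatible with the inversion invariance $d_\Phi(A,B) = d_\Phi(A^{-1}, B^{-1})$ recorded in Theorem~\ref{theorem:dphi_properties}). I would present it via the explicit cancellation $A^{1/2}B^{-1}A^{1/2} = C^{-1}$ above rather than invoking the isometry abstractly, which keeps the argument self-contained and makes the preservation of the parametrization transparent.
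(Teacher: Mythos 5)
Your proof is correct: the identity $(A \#_t B)^{-1} = A^{-1} \#_t B^{-1}$, verified by the explicit computation with $C = A^{-1/2}BA^{-1/2}$, shows inversion maps geodesics to geodesics with the same parametrization, and g-convexity of $f \circ \iota$ follows immediately. The paper itself states this proposition without proof, citing \cite{cheng2024disciplinedgeodesicallyconvexprogramming}, and your argument is essentially the standard one given there, so there is nothing to flag.
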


\paragraph{Atoms}
DGCP provides a fundamental set of atoms, which, when combined with its \textit{rules}, can be used to design novel regularizers for optimization on the $\pd$ manifold. 

\begin{example}[G-convex Atoms]\label{example:g_cvx_atoms}
    The following functions $\phi(X): \pd \to \mathbb{P}_m$ are g-convex with respect to the Riemannian affine invariant metric.
    \begin{enumerate}
    \item $\phi(X) = \log \det X$
        \item $\phi(X) = \tr(X)$ 
        \item If $S \succeq 0$ and $\phi(X) = \tr(SX)$
        
        \item Let $M \succeq 0$ and $M$ has no zero rows and $\phi(X) = M \odot X$ ($\phi$ is a strictly positive linear map)
        \item $\phi(X) = \Phi(\lambda(X))$ for any symmetric gauge function $\Phi(X): \reals^d \to \reals$
        \item Let $y \in \reals^{d}$ be a nonzero vector and $r \in \{-1, 1\}$. Then the function $\phi(X) = y^\top X^r y$ is strictly g-convex
        \item Let $h_i \in \reals^d$ be nonzero vectors for $i = 1, \ldots, n$ and $r \in \{-1, 1\}$. Then $\phi(X) = \log \left(\sum_{i=1}^n h_i^\top X^r h_i\right)$ is strictly g-convex.
    \end{enumerate}
\end{example}

\subsubsection{Designing new regularizers}
The utility of the DGCP framework, in conjunction with the class of symmetric functions, is now evident. By using the class of symmetric gauge functions $\SG$ along with the set of g-convex \textit{atoms}, and applying g-convex preserving \textit{rules}, we can design novel g-convex (and possibly DC) regularizers for optimization problems on $\pd$. Conversely, DGCP allows us to principally decompose known regularizers in terms of symmetric gauge functions and atoms to gain intuition of the properties that they induce. In short, DGCP and symmetric gauge functions presents a principled way of reasoning about regularization on $\pd$.

Below, we discuss a range of examples. Some of them recover known regularizers, reinterpreted via symmetric gauge functions. In particular, we will see that many classical regularizers can be written as compositions and transformations of the Schatten $1$-norm, which induces sparsity. We also introduce novel regularizers that have not been studied previously.

\paragraph{Known regularizers as symmetric gauge functions}

\begin{lem}[Regularizing Top-$k$ Eigenvalues]
      Let $\Phi:\pd \to \reals$ be the \emph{$k$-Ky-Fan norm} restricted to the set of positive definite matrices. For $k \in \{1, \ldots, d\}$,
    and $h(x) = |x|^p$ for $p \geq 1$. Then
    $R_\Phi(X) = \left | \sum_{j=1}^k \lambda^\downarrow_j(X) \right|^p$
    is g-convex. 
\end{lem}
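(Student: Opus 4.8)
The plan is to recognize $R_\Phi$ as the composition of the $k$-Ky-Fan norm with the scalar map $h(x)=|x|^p$, and then to invoke the g-convexity-preserving composition rule (Proposition~\ref{prop:cvx_gcvx_composition}).

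First I would note that the $k$-Ky-Fan norm $\Phi(X) = \sum_{j=1}^k \lambda_j^\downarrow(X)$ is precisely the unitarily invariant norm $\|X\|_{(k)}$ associated to the symmetric gauge function that sums the $k$ largest coordinates (cf. the Example on Ky Fan $k$-norms). By the Proposition establishing that unitarily invariant norms are g-convex---equivalently, g-convex Atom~5 of Example~\ref{example:g_cvx_atoms}---the map $X \mapsto \Phi(X)$ is g-convex on $\pd$. With this in hand, the statement reduces to showing that $h \circ \Phi$ inherits g-convexity, where $R_\Phi(X) = |\Phi(X)|^p = h(\Phi(X))$.

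Next I would apply Proposition~\ref{prop:cvx_gcvx_composition}, which guarantees that $h \circ f$ is g-convex whenever $f$ is g-convex and $h$ is nondecreasing and Euclidean convex. The map $h(x) = |x|^p$ with $p \geq 1$ is Euclidean convex on $\reals$, so the only point requiring care is monotonicity: $h$ fails to be nondecreasing on all of $\reals$, since it is strictly decreasing on $(-\infty,0)$. The resolution is that for $X \in \pd$ the eigenvalues are strictly positive, hence $\Phi(X) = \sum_{j=1}^k \lambda_j^\downarrow(X) > 0$; thus the range of $\Phi$ lies in $\reals_{>0}$, where $h(x) = x^p$ is nondecreasing. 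Restricting $h$ to the nonnegative reals (which leaves $h \circ \Phi$ unchanged) therefore satisfies the hypotheses of Proposition~\ref{prop:cvx_gcvx_composition}, and g-convexity of $R_\Phi$ follows.

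I expect the monotonicity caveat to be the only genuine obstacle: it is a routine but essential check that the absolute value in $R_\Phi$ is vacuous on $\pd$, which is what lets us compose with the nondecreasing branch of $|x|^p$ and apply the composition rule cleanly. Everything else is a direct appeal to results already established in the excerpt.
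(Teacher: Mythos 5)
Your proposal is correct and follows exactly the paper's route: the paper proves this lemma with the single line that it ``follows directly from Proposition~\ref{prop:cvx_gcvx_composition}'', i.e., composing the g-convex Ky-Fan norm (Atom~5) with the nondecreasing convex map $h$. Your added check that $\Phi(X)>0$ on $\pd$, so that only the nondecreasing branch of $|x|^p$ is ever used, is a worthwhile detail the paper leaves implicit.
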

This follows directly from Proposition~\ref{prop:cvx_gcvx_composition}.

\begin{prop}[Low-rank Regularizer]
    Let $\Phi:\pd \to \reals$ be the \emph{$n$-Ky-Fan norm} or the \emph{Schatten $1$-norm}. Then the low-rank inducing trace regularizer can be written as $R_\Phi(X)=\Phi(\lambda(X)) = \tr(X)$.
\end{prop}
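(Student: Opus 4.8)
The plan is to show that each of the two candidate functions, when evaluated on the eigenvalue vector $\lambda(X)$, collapses to the unweighted sum of the eigenvalues of $X$, and that this sum is exactly $\tr(X)$ by the spectral theorem.

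First I would unpack the two definitions recalled in the Example above. The Schatten $1$-norm is $\Phi_1(\lambda(X)) = \sum_{j=1}^d |\lambda_j(X)|$, and the $n$-Ky-Fan norm (with $n = d$ the spectral dimension) is $\|X\|_{(d)} = \sum_{j=1}^d |\lambda_j^\downarrow(X)|$, i.e. the sum of the $d$ largest eigenvalues, which for a $d \times d$ matrix is simply the sum of all of them. Thus in both cases the expression is a sum of absolute values of the eigenvalues of $X$.

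The key step is to remove the absolute values. Since $X \in \pd$ is positive definite, every eigenvalue satisfies $\lambda_j(X) > 0$, so $|\lambda_j(X)| = \lambda_j(X)$, and both functions reduce to $\sum_{j=1}^d \lambda_j(X)$. Finally I would invoke the spectral identity $\tr(X) = \sum_{j=1}^d \lambda_j(X)$: writing $X = U \Lambda U^\top$ with $U$ orthogonal and $\Lambda = \Diag(\lambda(X))$, cyclicity of the trace gives $\tr(X) = \tr(\Lambda) = \sum_{j=1}^d \lambda_j(X)$. Chaining these equalities yields $R_\Phi(X) = \Phi(\lambda(X)) = \sum_{j=1}^d \lambda_j(X) = \tr(X)$ for both choices of $\Phi$.

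There is no genuine obstacle here; the result is an immediate consequence of the definitions. The only subtlety worth flagging is that positive definiteness is precisely what licenses dropping the absolute values—so the identity rests essentially on $X \in \pd$ rather than on $X$ being an arbitrary symmetric matrix—and that the ``$n$'' labelling the Ky-Fan norm is understood to equal the full spectral dimension $d$.
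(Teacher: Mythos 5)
Your proof is correct: the paper states this proposition without proof, treating it as an immediate consequence of the definitions, and your verification (dropping absolute values via positive definiteness, then the spectral identity $\tr(X) = \sum_{j} \lambda_j(X)$) is exactly the routine argument being taken for granted. Your remark that the Ky-Fan index $n$ must equal the full spectral dimension $d$ is a fair and accurate clarification of the paper's notation.
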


Note that we can apply Proposition~\ref{prop:cvx_gcvx_composition} with $h(x) = \exp(x)$ to get another g-convex regularizer
    $R_\Phi(X) = \exp(\tr(X))$.

The following proposition illustrates two structure-inducing properties of the log-determinant barrier function $f(X) \defas \log \det X$, which is a g-linear (i.e. g-convex and g-concave) function. First, it encourages the iterates to be low-rank by penalizing the sum of the log of their eigenvalues. Second, it encourages the iterates to stay close to the identity matrix $I_d$ via the metric $d_\Phi(X, I_d)$.
\begin{prop}[Log-Det Barrier function]
     Let $\Phi$ be the Schatten $1$-norm and let $Z= I_d$ fixed. Then we have 
    \[
    \log \det X = d_\Phi(X, I_d)  = \sum_{j=1}^d \log \lambda_j (X).
    \]
\end{prop}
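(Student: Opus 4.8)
The plan is to establish the two equalities separately, both of which reduce to the spectral decomposition of $X$.

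For the rightmost equality, $\log \det X = \sum_{j=1}^d \log \lambda_j(X)$, I would use the standard fact that the determinant of an SPD matrix equals the product of its eigenvalues, $\det X = \prod_{j=1}^d \lambda_j(X)$. Taking logarithms and using that each $\lambda_j(X) > 0$ gives the claim immediately. This step is routine.

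For the middle equality I would invoke Theorem~\ref{theorem:dphi_properties}, which provides the closed form $d_\Phi(A, B) = \|\log(A^{-1/2} B A^{-1/2})\|_\Phi$. Setting $A = X$ and $B = I_d$, the argument collapses to $X^{-1/2} I_d X^{-1/2} = X^{-1}$, so that $d_\Phi(X, I_d) = \|\log(X^{-1})\|_\Phi = \|-\log X\|_\Phi$. When $\Phi$ is the Schatten $1$-norm, the associated unitarily invariant norm (via the von Neumann correspondence stated above) is the sum of singular values; since $-\log X$ is symmetric with eigenvalues $-\log \lambda_j(X)$, its singular values are $|\log \lambda_j(X)|$, and hence $d_\Phi(X, I_d) = \sum_{j=1}^d |\log \lambda_j(X)|$.

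The main obstacle, and the point I would flag carefully, is the absolute value. The computation yields $\sum_j |\log \lambda_j(X)|$, whereas the stated identity has $\sum_j \log \lambda_j(X)$ without absolute values; these coincide exactly when every $\log \lambda_j(X) \geq 0$, i.e.\ when $X \succeq I_d$. I therefore expect the identity to hold on the domain $X \succeq I_d$ (equivalently $\lambda_d(X) \geq 1$), which is precisely the regime relevant to the barrier interpretation, and on this domain the proof concludes at once. Without this restriction one obtains only $d_\Phi(X, I_d) = \sum_j |\log \lambda_j(X)|$, which agrees with $\log \det X$ merely up to the signs of the log-eigenvalues; I would state the restriction explicitly rather than leave it implicit.
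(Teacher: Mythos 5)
Your computation is correct, and note that the paper offers no proof of this proposition at all---it is simply asserted---so there is nothing to compare your argument against except the statement itself. Your two steps are the only natural ones: the spectral identity $\log\det X = \sum_{j}\log\lambda_j(X)$, and the closed form $d_\Phi(X,I_d) = \bigl\|\log\bigl(X^{-1/2} I_d X^{-1/2}\bigr)\bigr\|_\Phi = \|\log X\|_\Phi$ from Theorem~\ref{theorem:dphi_properties} combined with the von Neumann correspondence, which for the Schatten $1$-norm gives the sum of the singular values of the symmetric matrix $\log X$, namely $\sum_j |\log\lambda_j(X)|$.

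More importantly, the obstacle you flag is a genuine error in the statement as printed, not a defect of your proof. Since $d_\Phi$ is a metric it is nonnegative, whereas $\log\det X<0$ whenever $\det X<1$; concretely, for $X=\operatorname{diag}(2,\tfrac12)\in\mathbb{P}_2$ one has $\log\det X=0$ but $d_\Phi(X,I_2)=2\log 2>0$. The unconditional identity is $d_\Phi(X,I_d)=\sum_{j=1}^d|\log\lambda_j(X)|$, and it coincides with $\log\det X$ precisely on $\{X: X\succeq I_d\}$, exactly as you say---which is indeed the regime in which the barrier interpretation is meaningful. Your proposed restriction (or, alternatively, inserting the absolute values) is the right fix, and it should be propagated: the same conflation of $\log\det X$ with $d_\Phi(X,I_d)$ reappears in Proposition~\ref{prop:diagonal_loading} and in Example~\ref{example:regularized_tyler_estimator}. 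The structural conclusions drawn from the proposition survive in either form, since $X\mapsto\log\det X$ is g-linear and $X\mapsto d_\Phi(X,I_d)$ is g-convex by Proposition~\ref{prop:dphi_gcvx}, but the displayed equality itself requires your hypothesis $X\succeq I_d$.
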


Next we introduce another class of sparsity-inducing regularizers, defined via Schatten $p$-norms.

\begin{prop}[Smooth Schatten $p$-Functions]\label{prop:smooth_schatten_norm}
 Define $R_\Phi^{p}$ as 
\[
R^{p}_\Phi(X) \defas \operatorname{Tr}\left(X+\gamma I\right)^{p / 2} = \sum_{i=1}^d \left(\lambda_i(X) + \gamma\right)^{p / 2} = \|X + \gamma I\|_\Phi^{p / 2}
\]
where $\Phi$ is the Schatten 1-norm.
This is known as the \textit{smooth Schatten $p$-function}~\citep{irls_fazel}. 
\end{prop}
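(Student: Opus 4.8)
The statement is a chain of three identities defining $R_\Phi^p$, so the plan is to verify each by reducing everything to the spectral data of $X$ and then invoking the symmetric-gauge-function machinery developed above. The key observation that drives all three equalities is that, since $X \in \pd$ and $\gamma \geq 0$, the shifted matrix $X + \gamma I$ is again symmetric positive definite with eigenvalues $\lambda_i(X) + \gamma > 0$, so the fractional matrix power is unambiguously defined by functional calculus.

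First I would diagonalize. Writing $X = U \Lambda U^\top$ for an orthogonal $U$ and $\Lambda = \operatorname{Diag}(\lambda_1(X), \ldots, \lambda_d(X))$, we have $X + \gamma I = U(\Lambda + \gamma I)U^\top$, and because the spectrum is strictly positive the principal power is $(X + \gamma I)^{p/2} = U(\Lambda + \gamma I)^{p/2} U^\top$, with diagonal entries $(\lambda_i(X) + \gamma)^{p/2}$. Taking traces and using similarity-invariance of the trace immediately yields the first identity, $\operatorname{Tr}(X + \gamma I)^{p/2} = \sum_{i=1}^d (\lambda_i(X) + \gamma)^{p/2}$. This step is purely the spectral theorem and carries no analytic difficulty.

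For the norm expression I would connect this sum to a unitarily invariant norm via von Neumann's theorem, under which the Schatten $1$-norm is the norm generated by the symmetric gauge function $\Phi_1(x) = \sum_i |x_i|$, so that $\|Y\|_\Phi = \sum_i \lambda_i(Y)$ for any $Y \succ 0$. Applying this to the positive definite matrix $(X + \gamma I)^{p/2}$, whose eigenvalues are exactly $(\lambda_i(X) + \gamma)^{p/2}$, recovers $\sum_i (\lambda_i(X) + \gamma)^{p/2}$ and closes the chain. The one point requiring care is the reading of the exponent $p/2$ attached to the norm: it must be interpreted as the Schatten $1$-norm of the fractional power $(X + \gamma I)^{p/2}$ (equivalently, through the $\ell_{p/2}$-transformation of Proposition~\ref{prop:lp_sgf}, as the $(p/2)$-th power of the Schatten-$(p/2)$ norm), rather than as the scalar power of the aggregated norm $\|X + \gamma I\|_\Phi$. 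I would verify that both conventions produce the same quantity. Once this bookkeeping is fixed there is no genuine obstacle: positivity of $\gamma$ guarantees the fractional matrix power is well-defined and smooth, and every equality follows from the spectral theorem together with the permutation invariance of $\Phi$ already established.
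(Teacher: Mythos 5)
Your verification is correct, and it is essentially the argument the paper leaves implicit: the proposition is stated without proof (it is an observation imported from \citep{irls_fazel}), and the intended justification is exactly your spectral-theorem computation, diagonalizing $X+\gamma I$, reading off the eigenvalues $(\lambda_i(X)+\gamma)^{p/2}$ of the matrix power, and taking the trace. The one substantive point you raise is genuine and you resolve it the right way: under the literal parsing, $\|X+\gamma I\|_\Phi^{p/2}=\bigl(\sum_{i}(\lambda_i(X)+\gamma)\bigr)^{p/2}$, which differs from $\sum_{i}(\lambda_i(X)+\gamma)^{p/2}$ whenever $d>1$ and $p\neq 2$, so the exponent must be read as acting on the matrix, i.e.\ $\|(X+\gamma I)^{p/2}\|_\Phi$, as in the original smooth Schatten-$p$ function of \citep{irls_fazel}. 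One small caveat on your parenthetical alternative: the $\ell_{p/2}$-transformation of Proposition~\ref{prop:lp_sgf} yields a symmetric gauge function only when $p/2\geq 1$, whereas the paper explicitly uses $R_\Phi^{p}$ in the regime $p\in(0,1)$ (IRLS-$p$), where $\|\cdot\|_{p/2}$ is merely a quasi-norm; the numerical identity $\|X+\gamma I\|_{p/2}^{p/2}=\sum_{i}(\lambda_i(X)+\gamma)^{p/2}$ you invoke still holds for all $p>0$, but the gauge-function framing does not.
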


The Schatten $p$-function is differentiable for $p > 0$ and Euclidean convex (short: e-convex) and g-convex for $p \geq 1$. For $p \in [0, 1)$, $R_\Phi^{(p)}$ is used in the iteratively reweighted least squares (IRLS-\textit{p}) algorithm to obtain low-rank solutions under affine constraints,  i.e., the affine rank minimization problem \citep{irls_fazel}. The level of sparsity is regulated by $p$ where stronger sparsity is induced for smaller values of $p$. 

\begin{rmk}\normalfont
Interestingly, if we take $p \to 0$, we obtain a familiar g-convex (in fact, g-linear) regularizer:

\[
\begin{aligned}
    \lim_{p \to 0}\frac{R^{(p)}_\Phi(X) - d}{p} &= \frac{1}{2}\sum_{i=1}^d \frac{\left[\left(\lambda_i(X) + \gamma \right)^{p / 2} - 1 \right]}{\frac{p}{2}}
    \\&= \frac{1}{2} \sum_{i=1}^d \log \left(\lambda_i(X) + \gamma \right) \qquad \Big( \lim_{p \to 0} \frac{x^p - 1}{p} = \log x \text{ for } x \in \reals_{++} \Big)
    \\&= \frac{1}{2}\log \det \left(X + \gamma I\right) \; ,
\end{aligned}
\]
which is used in the IRLS-0 algorithm in \citep{irls_fazel}.
\end{rmk}

The following lemma gives a useful connection between the S-divergence and the Schatten 1-norm.
\begin{lem}[S-Divergence]\label{ex:sdiv}
     Choose $\Phi$ to be the Schatten 1-norm. The S-divergence $\delta_S^2(X,Y): \pd \times \pd \to \reals_+$ is defined by 
\[
\delta_S^2(X,Y) \defas \log \det \left(\frac{X+Y}{2}\right) - \frac{1}{2}\log \det (XY)
\]
and can be expressed as 
\[
\delta_S^2(X,Y) = d_\Phi(X, X+Y) - \frac{1}{2}d_\Phi(X,Y) - \log 2 \|I_d\|_\Phi.
\]

\end{lem}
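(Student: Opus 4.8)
The plan is to expand both $d_\Phi$ terms via the closed-form expression of Theorem~\ref{theorem:dphi_properties}, collapse each Schatten $1$-norm of a matrix logarithm into a log-determinant, and then verify that the resulting expression matches the definition of $\delta_S^2$. Concretely, Theorem~\ref{theorem:dphi_properties} gives $d_\Phi(A,B)=\|\log(A^{-1/2}BA^{-1/2})\|_\Phi$, and for the Schatten $1$-norm this equals $\sum_{j=1}^d |\log\lambda_j(A^{-1/2}BA^{-1/2})|$, the sum of absolute values of the log-eigenvalues. The governing observation is that when $A^{-1/2}BA^{-1/2}\succ I$ all log-eigenvalues are positive, so the absolute values are inert and the norm collapses to $\sum_j\log\lambda_j(A^{-1/2}BA^{-1/2})=\log\det(A^{-1/2}BA^{-1/2})=\log\det B-\log\det A$.

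First I would handle the term $d_\Phi(X,X+Y)$. Because $Y\succ 0$ we have $X^{-1/2}(X+Y)X^{-1/2}=I+X^{-1/2}YX^{-1/2}\succ I$, so the collapse above applies verbatim and $d_\Phi(X,X+Y)=\log\det(X+Y)-\log\det X$. The normalization is then immediate: for the Schatten $1$-norm $\|I_d\|_\Phi=\sum_{j=1}^d 1=d$, so $\log 2\,\|I_d\|_\Phi=d\log 2$. Using $\log\det\tfrac{X+Y}{2}=\log\det(X+Y)-d\log 2$, substituting these pieces into the right-hand side reduces the claimed identity to the single remaining equality $d_\Phi(X,Y)=\log\det Y-\log\det X$.

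The main obstacle is precisely this middle term, where the simplification is genuinely more delicate. The Schatten $1$-norm $d_\Phi(X,Y)=\sum_j|\log\lambda_j(X^{-1/2}YX^{-1/2})|$ is a sum of \emph{absolute} log-eigenvalues, whereas $\log\det Y-\log\det X=\sum_j\log\lambda_j(X^{-1/2}YX^{-1/2})$ is the corresponding \emph{signed} sum. I would therefore identify $d_\Phi(X,Y)$ with the signed log-determinant difference through the determinant identity $\log\det(X^{-1/2}YX^{-1/2})=\log\det Y-\log\det X$, taking care that the sign alignment of the log-eigenvalues is exactly what permits the Schatten $1$-norm to reduce to the log-determinant here. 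Once this middle term is in log-determinant form, a direct bookkeeping of the three pieces yields $\log\det(X+Y)-d\log 2-\tfrac12\log\det X-\tfrac12\log\det Y$, which is exactly $\log\det\tfrac{X+Y}{2}-\tfrac12\log\det(XY)=\delta_S^2(X,Y)$, completing the argument.
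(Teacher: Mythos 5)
Your treatment of the first and third terms is correct and coincides with the paper's computation: since $X^{-1/2}(X+Y)X^{-1/2}=I+X^{-1/2}YX^{-1/2}\succ I$, all log-eigenvalues are positive, so $d_\Phi(X,X+Y)=\log\det(I+X^{-1}Y)+\log\det X-\log\det X$ collapses as you say, and $\|I_d\|_\Phi=d$ for the Schatten $1$-norm. The genuine gap is exactly at the spot you label ``the main obstacle'': you correctly diagnose that $d_\Phi(X,Y)=\sum_j|\log\lambda_j(X^{-1/2}YX^{-1/2})|$ is an \emph{absolute} sum while $\log\det Y-\log\det X$ is the \emph{signed} sum, but you then assert the identification by appeal to a ``sign alignment'' that does not exist for general $X,Y\in\pd$. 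The eigenvalues of $X^{-1/2}YX^{-1/2}$ can straddle $1$, and even when they do not, if $Y\prec X$ the absolute sum is the \emph{negative} of the signed sum. A one-dimensional example already falsifies the step (and the lemma as literally stated): take $x=4$, $y=1$. Then $\delta_S^2(x,y)=\log\tfrac{5}{2}-\tfrac12\log 4=\log\tfrac54$, whereas the claimed right-hand side is $d_\Phi(x,x+y)-\tfrac12 d_\Phi(x,y)-\log 2=\log\tfrac54-\tfrac12\bigl|\log\tfrac14\bigr|-\log 2=\log\tfrac54-2\log 2$, off by $2\log 2$. So no argument can close your middle step in full generality; it is valid precisely when $X\preceq Y$ (all eigenvalues of $X^{-1}Y$ at least $1$), where the absolute values are inert.

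You should know that the paper's own proof commits the identical slip: its final line silently replaces $\tfrac12\log\det(X^{-1}Y)=\tfrac12\sum_i\log\lambda_i(X^{-1}Y)$ with $\tfrac12 d_\Phi(X,Y)=\tfrac12\sum_i\bigl|\log\lambda_i(X^{-1}Y)\bigr|$, so in that sense your blind reconstruction faithfully reproduces the paper's route, eigenvalue expansion and all --- and you deserve credit for being the only one to explicitly flag the absolute-value issue. But flagging it and then waving it through is not a proof. The honest repair is one of two restatements: either add the hypothesis $X\preceq Y$ (under which your argument goes through verbatim), or replace $\tfrac12 d_\Phi(X,Y)$ by the signed quantity $\tfrac12\log\det\bigl(X^{-1}Y\bigr)$, in which case the identity $\delta_S^2(X,Y)=d_\Phi(X,X+Y)-\tfrac12\log\det(X^{-1}Y)-\log 2\,\|I_d\|_\Phi$ holds for all $X,Y\in\pd$ with no case analysis. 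As written, your proposal (like the lemma) is incorrect for $Y\not\succeq X$.
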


\begin{proof}
To see this, we perform simple algebra on the definition of $\delta_S^2(\cdot,\cdot)$:
\[
\begin{aligned}
    \delta_S^2(X,Y) &= \log \det X + \log \det \left(I_d + X^{-1}Y \right)  - n \log 2 - \frac{1}{2}\left[\log \det X + \log \det Y \right]
    \\&= \log \det \left(X^{-1} Y + I_d\right) - \frac{1}{2}\log \det X^{-1}Y - n\log 2
    \\&= \sum_{i=1}^d \left[ \log (1 + \lambda_i(X^{-1}Y) \right] - \frac{1}{2}\sum_{i=1}^d \lambda_i(X^{-1}Y) - n \log 2
    \\&= d_\Phi(X, X+Y) - \frac{1}{2}d_\Phi(X,Y) - \log2 \|I_d\|_\Phi \; ,
\end{aligned}
\]
where $\Phi$ is the Schatten 1-norm. 
\end{proof}

$\delta_S^2(\cdot, Y)$ has many desirable algorithmic properties. It is g-convex and DC, a metric, and its gradient can be computed particularly efficiently. It can be seen as a symmetrized version of the \textit{Log-Determinant Divergence.} The S-divergence is extensively studied in \citep{sra2013sdivergence}.

\begin{prop}[Diagonal Loading]\label{prop:diagonal_loading}
 We can sum the log-det barrier and the trace-inverse regularizer to get the \textit{diagonal loading} regularizer $R_\Phi(X): \pd \to \reals$ defined by 

$$R_\Phi(X) \defas \tr X^{-1} + \log \det X = \|X^{-1}\|_{\Phi} + d_{\Phi}(X,I_d) \; ,$$
where $\Phi$ is the Schatten 1-norm. Then $R_\Phi(X)$ is g-convex and DC in $X$.
\end{prop}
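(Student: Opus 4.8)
The plan is to treat the claimed identity and the two structural claims (g-convexity and DC) separately, handling the identity first, since both structural claims then follow by recognizing $R_\Phi$ as a sum of two functions whose individual properties are already available in the excerpt.

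For the identity, I would split $R_\Phi(X) = \tr X^{-1} + \log\det X$ into its two summands and match each to one of the two regularizer families. For the first term, von Neumann's theorem identifies the Schatten $1$-norm with the sum of singular values; since $X^{-1}$ is positive definite, its singular values coincide with its eigenvalues, so $\|X^{-1}\|_\Phi = \sum_j \lambda_j(X^{-1}) = \tr X^{-1}$. For the second term I would invoke the Log-Det Barrier function proposition (equivalently, Theorem~\ref{theorem:dphi_properties} with $B = I_d$, which gives $d_\Phi(X, I_d) = \|\log(X^{-1/2} I_d X^{-1/2})\|_\Phi = \|\log X^{-1}\|_\Phi$), so that for the Schatten $1$-norm $d_\Phi(X, I_d) = \sum_j \log\lambda_j(X) = \log\det X$. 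Adding the two identities yields $R_\Phi(X) = \|X^{-1}\|_\Phi + d_\Phi(X, I_d)$.

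For g-convexity, I would observe that $\log\det X$ is a g-linear atom (Example~\ref{example:g_cvx_atoms}) and hence g-convex. For $\tr X^{-1} = \|X^{-1}\|_\Phi$, I would combine the Proposition that every unitarily invariant norm $\|\cdot\|_\Phi$ is g-convex with the variable-substitution rule (Proposition~\ref{lemma:inverse_gcvx}) that $X \mapsto X^{-1}$ preserves g-convexity; together these give g-convexity of $\|X^{-1}\|_\Phi$. Since a nonnegative combination of g-convex functions is g-convex, the sum $R_\Phi$ is g-convex. For the DC claim I would read off the decomposition directly from the two summands under the Euclidean geometry: $\tr X^{-1}$ is Euclidean convex on $\pd$, while $\log\det X$ is Euclidean concave and smooth. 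Setting $f(X) = \tr X^{-1}$ (convex) and $h(X) = -\log\det X$ (convex and smooth) writes $R_\Phi = f - h$, exhibiting the DC structure required by the CCCP framework of Section~\ref{sec:background-cccp}.

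The main delicacy I anticipate is the second half of the identity: strictly, $\|\log X^{-1}\|_\Phi = \sum_j |\log\lambda_j(X)|$ carries absolute values, whereas $\log\det X = \sum_j \log\lambda_j(X)$ does not, so the equality $d_\Phi(X, I_d) = \log\det X$ holds under the convention and domain already fixed in the Log-Det Barrier proposition rather than verbatim for every $X$. I would therefore lean on that earlier statement rather than rederive it, and flag that all remaining steps are either direct citations or routine consequences of the Euclidean convexity of $\tr X^{-1}$ and concavity of $\log\det$.
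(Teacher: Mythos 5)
Your proposal is correct and follows essentially the same route as the paper's proof: g-convexity from the g-convexity of unitarily invariant norms together with the inverse-substitution rule (Proposition~\ref{lemma:inverse_gcvx}) and of $d_\Phi(\cdot, I_d)$, and the DC decomposition from the Euclidean convexity of $\tr X^{-1}$ and concavity of $\log \det X$. You are in fact somewhat more thorough than the paper, which asserts the identity $R_\Phi(X) = \|X^{-1}\|_\Phi + d_\Phi(X, I_d)$ without verification; your flagged caveat that $d_\Phi(X, I_d) = \sum_j |\log \lambda_j(X)|$ carries absolute values, so that the equality with $\log \det X$ holds verbatim only when $X \succeq I_d$, is a genuine subtlety inherited from the paper's Log-Det Barrier proposition that its own proof glosses over entirely.
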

\begin{proof}
    Observe that $R_\Phi(X)$ is g-convex since it is a sum of a unitarily invariant norm $\|\cdot\|_\Phi$ and distance metric $d_\Phi(\cdot,\cdot)$ corresponding to a symmetric gauge function $\Phi$. The DC structure follows from the fact that $f(X) = \log \det (X)$ and $g(X) = \tr X^{-1}$ are concave and convex, respectively.
\end{proof}

This regularizer is also known as the \textit{shrinkage to identity} regularizer for covariance estimation \citep{structuredcovestimation_wiesel}. It is used when the number of samples is small relative to the number of features $d$. It encourages the solution to be close to the identity $I_d$ which is illustrated by the $d_\Phi(X, I_d)$ term. Another way to check this is to show that $R_\Phi(X)$ is minimized at $I$ by setting its gradient to zero.

\paragraph{Symmetric gauge functions in applications}
In many classical problems, the objective itself can be written  in terms of symmetric gauge functions without explicit regularizers. The techniques discussed above, including the DGCP framework, can be applied to these problems, too. We illustrate this on three examples.

\begin{example}[Square Root]\label{ex:sqrt}
     Sra~\citep{sra2015matrixsquareroot} introduced a formulation of the problem of
     computing the square root of $A \in \pd$ using the S-divergence (see Example~\ref{ex:sdiv}):
      \[
        \min _{X \in \pd} \left\{\phi(X) \defas \delta_S^2(X, A)+\delta_S^2(X, I) \right\}.
    \]
     In fact, this formulation provides a parametrization of the problem in terms of transformations and compositions of symmetric gauge functions.
     It has several desirable algorithmic properties, including DC structure and g-convexity. We will discuss this example in more detail below.
\end{example}

\begin{example}[Karcher Mean]\label{ex:karcher}
The Karcher mean problem \citep{pmlr-v202-weber23a} is the solution to the following problem.
Given data $\{A_1, \ldots, A_m\} \in \pd$ and $w \in \reals^m_+$ such that $\sum_{i=1}^m w_i = 1$ we solve
\[
\min_{X \in \pd} \left\{\phi(X) \defas \sum_{i=1}^m w_i \delta_R^2(X,A_i)\right\} \; ,
\]
where $\delta_R(X,A_i) \defas \| X^{-1/2}A_i X^{-1/2}\|_F$ is the Riemannian metric.
Sra~\citep{sra2013sdivergence} showed that the Karcher mean problem can be reformulated as
\[
\min_{X \in \pd} \left\{\phi(X) \defas \sum_{i=1}^m w_i \delta_S^2(X, A_i) \right\},
\]
where $\delta_S^2$ is again the S-divergence. The problem is g-convex and DC.
\end{example}

\begin{example}[Tyler's Estimator with Diagonal Loading]\label{example:tyler_estimator_diagonal_loading}
    Tyler's estimator \citep{tyler1987distribution} is a well-known robust covariance estimator. It is defined as the solution to the g-convex and DC  objective $\phi(\Sigma): \pd \to \reals$ defined by
    \[
    \argmin_{\Sigma \in \pd}\phi(\Sigma) \defas \frac{d}{n} \sum_{i=1}^n \log \left(x_i^T \mathbf{\Sigma}^{-1} x_i\right)+\log \det(\Sigma) \; ,
    \]
where $\{x_i\}_{i=1}^n \subseteq \reals^d$ are observed data samples. Wiesel and Zhang~\citep{structuredcovestimation_wiesel} introduce the penalty $R_\Phi(\Sigma) \defas \tr \left(\Sigma^{-1}\right)  + \log \det \Sigma$ which encourages a solution towards the identity matrix (see Proposition~\ref{prop:diagonal_loading}). By proposition \ref{prop:diagonal_loading} the regularizer is g-convex and DC. Hence
\[
\hat{\phi}(\Sigma) = \frac{d}{n} \sum_{i=1}^n \log \left(x_i^T \mathbf{\Sigma}^{-1} x_i\right)+\log \det(\Sigma) + \beta \left( \tr \left(\Sigma^{-1}\right) + \log \det \Sigma \right)
\]
is g-convex and DC.
\end{example}

\begin{example}[Normalized Regularized Tyler Estimator]\label{example:regularized_tyler_estimator} Suppose we are given $n$ independent realizations $\{x_1, \ldots, x_n\} \subseteq \reals^d$ drawn from an unknown zero mean distribution $f(x)$ with covariance $\Sigma$. To estimate $\Sigma$, one can use the \textit{normalized regularized Tyler estimator} (see~\cite[Section 4.2.3]{structuredcovestimation_wiesel}) which is (up to scaling) the minimizer  of 
\[
R_\Phi(Y) =\frac{d}{n} \sum_{i=1}^n \log \left[(1-\alpha) x_i^\top Y^{-1} x_i+\alpha \frac{\left\|x_i\right\|^2}{p} \operatorname{Tr}\left\{Y^{-1}\right\}\right]+\log \det Y \; ,
\]
where $\alpha \in [0,1]$ is a hyperparameter.
One can rewrite $R(Y)$ as 
\[
R_\Phi(Y) = d_\Phi(D, I_d) + d_\Phi(Y, I_d) \; ,
\]
where $\Phi$ is the Schatten $1$-norm and $D$ is the diagonal matrix with entries $$D_{ii} = \tr \left(Y^{-1}\left((1-\alpha) x_i x_i^\top + \alpha \|x_i\|^2 d^{-1} I_d \right) \right) \qquad i = 1, \ldots, d.$$
\end{example}

\section{Solving Structured Regularized Problems via CCCP}

\subsection{Exploiting the structural properties of $R_\Phi$}
\label{sec:4.1}
Recall that in Section~\ref{section:Algo_benefit_SG} we discussed that for any $\Phi \in \SG$ we can combine transformations of $S_\Phi$ and $d_\Phi$ to obtain a g-convex (and possibly DC) regularizer $R_\Phi$. 
Table~\ref{tbl:phi_hat_properties} illustrates the ``g-convex+DC'' property of the regularized problem $\hat{\phi}$ for varying g-convex and DC properties of $\phi$ and $R_\Phi$.

For completeness, we will provide a proof of these results.

\begin{prop}
    We prove the assertions in Table~\ref{tbl:phi_hat_properties}. Let $\phi: \pd \to \reals$ be the objective function, $R_\Phi(X):\pd \to \reals_+$ our generated regularizer for some $\Phi \in \SG$, and $\hat{\phi}:\pd \to \reals$ to be the resulting regularized objective. The following holds 
    \begin{enumerate}
        \item If $\phi(X)$ is g-convex and $R_\Phi(X)$ is g-convex then $\hat{\phi}(X)$ is g-convex.
        \item If $\phi(X)$ is g-convex and e-convex and $R_\Phi(X)$ is g-convex and DC then $\hat{\phi}(X)$ is g-convex and DC.
        \item If $\phi(X)$ is g-convex and DC and $R_\Phi(X)$ is g-convex and e-convex then $\hat{\phi}(X)$ is g-convex and DC.
        \item If $\phi(X)$ is g-convex and DC and $R_\Phi(X)$ is g-convex and DC then $\hat{\phi}(X)$ is g-convex and DC.
    \end{enumerate}
\end{prop}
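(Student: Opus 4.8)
The plan is to reduce all four assertions to three elementary closure facts and then read off each case by inspecting the available hypotheses: (i) the sum of two g-convex functions is g-convex, (ii) every Euclidean convex function is trivially DC, and (iii) the sum of two DC functions is DC. Since $\hat{\phi} = \phi + R_\Phi$ in every part, each conclusion will then follow by combining the relevant facts.

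First I would establish (i). For any geodesic segment $\gamma:[0,1]\to\pd$ of the form \eqref{eq:intro_gcvx_def}, the restriction $\hat{\phi}\circ\gamma = \phi\circ\gamma + R_\Phi\circ\gamma$ is a sum of two (Euclidean) convex functions on $[0,1]$, by the definition of g-convexity applied separately to $\phi$ and $R_\Phi$; since a sum of convex functions of one real variable is convex, $\hat{\phi}\circ\gamma$ is convex and hence $\hat{\phi}$ is g-convex. This single step settles the g-convexity conclusion in all four parts, as both $\phi$ and $R_\Phi$ are assumed g-convex throughout. For (ii), writing a Euclidean convex $g$ as $g = g - 0$ exhibits it as a difference of convex functions, hence DC, with the smooth concave part identically zero. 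For (iii), if $\phi = f_1 - h_1$ and $R_\Phi = f_2 - h_2$ with $f_i, h_i$ Euclidean convex and $h_i$ smooth, then $\hat{\phi} = (f_1+f_2) - (h_1+h_2)$; since sums of convex functions are convex and sums of smooth functions are smooth, this is a valid DC decomposition of $\hat{\phi}$.

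With these facts assembled, the four parts follow immediately. Part (1) asks only for g-convexity and is handled by the first step. In parts (2)--(4), g-convexity again comes from the first step, while the DC conclusion comes from (iii) after using (ii) to promote any merely e-convex summand to a DC summand: in (2) $\phi$ is e-convex (hence DC) and $R_\Phi$ is DC, in (3) $\phi$ is DC and $R_\Phi$ is e-convex (hence DC), and in (4) both are DC. I do not expect a genuine obstacle here; the only point worth flagging explicitly is that the DC decomposition lives in the \emph{Euclidean} geometry whereas g-convexity is a \emph{Riemannian} property, so the two conclusions must be proved independently and then combined, rather than deduced from a single notion of convexity.
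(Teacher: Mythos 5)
Your proposal is correct and takes essentially the same route as the paper: g-convexity of $\hat{\phi}$ follows from closure of g-convexity under sums, and each DC conclusion follows by combining the Euclidean DC decompositions of the two summands, where your device of writing an e-convex summand as $g = g - 0$ is exactly how the paper's cases (2) and (3) absorb the e-convex term into the convex part, yielding literally the same decompositions (up to the harmless positive weight $\beta$ in $\hat{\phi} = \phi + \beta R_\Phi$, which both properties tolerate). Your only departures are cosmetic but welcome: you factor the four-way case analysis into three reusable closure facts, and you explicitly note that the concave parts of the combined decomposition stay smooth, a point the paper's proof leaves implicit.
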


\begin{proof}
Recall the structure of the regularized objective 
\[
\hat{\phi}(X) = \phi(X) + \beta R_\Phi(X) \qquad \text{for some } \beta > 0.
\]
\begin{enumerate}
    \item The g-convexity of $\hat{\phi}(X)$ follows from the fact that the sum of two g-convex functions is g-convex.
    \item The g-convexity of $\hat{\phi}(X)$ again follows from the fact that the sum of two g-convex functions is g-convex. Write $R_\Phi(X) = f(X) - h(X)$ where $f,h:\pd \to \reals$ are Euclidean convex functions. This implies DC. Moreover, we have
    \[
    \hat{\phi}(X) = \left(\phi(X) + \beta f(X)\right) - \beta h(X).
    \]
    Observe that $\phi(X) + \beta f(X)$ is e-convex and thus $\phi(X)$ is g-convex.
    \item Write $\phi(X) = f(X) - h(X)$ where $f,h:\pd \to \reals$ are e-convex. Then 
    \[
    \hat{\phi}(X) = \left( f(X) + \beta R_\Phi(X) \right) - h(X)
    \]
    which is g-convex and DC.
    \item Write $\phi(X) = f_1(X) - h_1(X)$ and $R_\Phi(X) = f_2(X) - h_2(X)$ where $f_k, h_k : \pd \to \reals$ are e-convex for $k = 1,2$. Write
    \[
    \hat{\phi}(X) = \left(f_1(X) + \beta f_2(X)\right) - \left(h_1(X) + \beta h_2(X)\right)
    \]
    which is clearly g-convex and DC.
\end{enumerate}
\end{proof}

\begin{rmk}\normalfont
    Table~\ref{tbl:phi_hat_properties} illustrates the interaction between the convexity and DC structure of $\phi(X)$ and $R_\Phi(X)$ and the resulting regularized problem $\hat{\phi}(X)$. In particular, due to the desirable properties of g-convex and DC, Table~\ref{tbl:phi_hat_properties} highlights the desirable properties of $R_\Phi(X)$, ranked from greatest to least: g-convex and DC, g-convex and e-convex, and g-convex. 
\end{rmk}

\begin{table}
    \centering
    \footnotesize
  \label{tbl:phi_hat_properties}
    \caption{
     Properties of $\hat{\phi}(X) = \phi(X) + \beta R_\Phi(X)$ where $R_\Phi$ is a regularizer consisting of (transformations of) $S_\Phi$ and $d_\Phi(X, \hat{X})$  for a specified $\Phi \in \SG.$ All regularized problems are g-convex but \textit{not} Euclidean convex.
    } 
  \begin{tabular}{c c c c c}
    \multicolumn{5}{c}{} \\
    \toprule
    \multicolumn{2}{c}{\textbf{Original}} & \multicolumn{2}{c}{\textbf{Regularized}} & 
      \multicolumn{1}{c}{\textbf{Example}} \\
    \cmidrule(r){1-2} \cmidrule(r){3-4} 
    $\phi(X)$ & $\mathcal{X}$ & $R_\Phi(X)$ & $\hat{\phi}(X)$ & {} \\
    \midrule
    g-cvx + DC, $\neg$ e-cvx  & $\pd$ & g-cvx + DC & g-cvx + DC & \makecell{Tyler's Estimator w/ Diagonal Loading \\(Ex.~\ref{example:tyler_estimator_diagonal_loading}, Sec. 4.2.2~\citep{structuredcovestimation_wiesel}) \\ \& Karcher Mean w/ S-Divergence (Ex.~\ref{ex:karcher})}\\ \\
    g-cvx, $\neg$ DC, $\neg$ e-cvx & $\pd$ & g-cvx & g-cvx & \makecell{Karcher Mean w/ Riemannian metric \\(Ex.~\ref{ex:karcher})} \\
    g-cvx+DC, $\neg$ e-cvx & 
    $B_{R}$ & g-cvx + DC & g-cvx + DC & \makecell{Optimistic Likelihood Problem \\(Sect.~\ref{section:optimisic_likelihood_problem})} \\
    g-cvx, $\neg$ DC, e-cvx & 
    $B_{R}$ & g-cvx + DC & g-cvx + DC & \makecell{Linear Regression on $\pd$ with S-divergence \\(Sect.~\ref{section:optimisic_likelihood_problem})} \\
    \bottomrule 
    \\
  \end{tabular}
\end{table}

\subsection{Complexity analysis}
In the previous section we discussed desirable properties that our class of regularizers induces in the reparametrized objective with respect to convexity and difference of convex structure (Tab.~\ref{tbl:phi_hat_properties}). We will now discuss the algorithmic implications of this induced structure, in particular, how it allows for applying CCCP algorithms in the constrained setting, as well as implications on the iteration and oracle complexities of the resulting optimization routine.

\subsubsection{Iteration complexity}
The \emph{iteration complexity} of an algorithm refers to the number of iterations required to reach an $\epsilon$-accurate solution and hence provides an important lens for non-asymptotic convergence analysis. Classical Riemannian first-order methods~\citep{frank-wolfe,liu2019simple} for constrained optimization on manifolds converge to the global optimum of problem~\ref{eq:prob} at a sublinear rate, whereas for Euclidean constrained methods, only sublinear convergence to a stationary point can be guaranteed. Our regularization approach, which reparametrizes problem~\ref{eq:prob} as an \emph{unconstrained} problem, further allows for leveraging CCCP (Alg.~\ref{alg:cccp}). The regularized objective fulfills the conditions of Theorem~\ref{prop:cccp-conv}, which guarantees an at most sublinear iteration complexity:
\begin{cor}
    Let $\hat{\phi}$ denote a regularization of problem~\ref{eq:prob} that is g-convex and DC. Then Alg.~\ref{alg:cccp} converges to a global optimum at a sublinear rate.
\end{cor}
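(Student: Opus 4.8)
The plan is to derive the corollary directly from Theorem~\ref{prop:cccp-conv}, treating it as a black box and checking that its hypotheses are met by $\hat{\phi}$. That theorem presupposes a g-convex objective and, under the additional requirement that the CCCP surrogates are \emph{first-order surrogate functions}, delivers the $O(1/k)$ global rate. The corollary's two standing assumptions map onto exactly these two ingredients: g-convexity of $\hat{\phi}$ is what makes the Euclidean CCCP iteration converge to a \emph{global} (rather than merely stationary) optimum, while the DC structure is what lets us build the surrogate in the first place. Accordingly, since $\hat{\phi}$ is assumed DC, I would first fix a decomposition $\hat{\phi}(X) = f(X) - h(X)$ with $f,h$ Euclidean convex and $h$ smooth with an $L$-Lipschitz gradient --- precisely the structure consumed by Alg.~\ref{alg:cccp}.

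First I would verify the first-order surrogate property. With the CCCP surrogate $Q(x,x_k) = f(x) - h(x_k) - \langle \nabla h(x_k), x-x_k\rangle$ from Alg.~\ref{alg:cccp}, the approximation error is
\[
Q(x,x_k) - \hat{\phi}(x) = h(x) - h(x_k) - \langle \nabla h(x_k), x - x_k\rangle,
\]
i.e. the Bregman divergence of $h$. Convexity of $h$ makes this nonnegative, so $Q(\cdot,x_k)$ majorizes $\hat{\phi}$; the error vanishes at $x=x_k$, as does its Euclidean gradient there; and the $L$-smoothness of $h$ bounds its curvature. These are exactly the defining conditions of a first-order surrogate, so the hypothesis of Theorem~\ref{prop:cccp-conv} holds with the same modulus $L$.

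Next I would invoke the theorem. Choosing an initialization $x_0$ whose sublevel set $\{X \in \pd : \hat{\phi}(X) \le \hat{\phi}(x_0)\}$ is bounded --- which furnishes a finite $R$ with $d(x_0,x^*) \le R$ --- Theorem~\ref{prop:cccp-conv} yields $\hat{\phi}(x_k) - \hat{\phi}(x^*) \le \tfrac{2L\alpha_{\Mc}^2(R)}{k+2}$ for all $k \ge 1$, which is a sublinear rate. Because $\hat{\phi}$ is g-convex on the Cartan--Hadamard manifold $\pd$, this bound is against the global minimum, establishing global sublinear convergence as claimed.

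The main obstacle is the reconciliation of the purely Euclidean DC decomposition with the Riemannian convergence analysis underlying Theorem~\ref{prop:cccp-conv}: one must confirm that the Euclidean Bregman error above qualifies as a first-order surrogate in the \emph{geodesic} sense, so that its curvature is correctly absorbed into the geometry-dependent constant $\alpha_{\Mc}(R)$. This is exactly the point where the g-convexity of $\hat{\phi}$ and the nonpositive curvature of $\pd$ must be used jointly, and it is handled by the cited theorem; once those conditions are certified, the remaining steps are routine bookkeeping.
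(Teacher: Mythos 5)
Your proposal is correct and follows essentially the same route as the paper, which likewise obtains the corollary by observing that a g-convex, DC objective $\hat{\phi}$ satisfies the hypotheses of Theorem~\ref{prop:cccp-conv} and inheriting its $O(1/k)$ global rate. Your explicit verification that the Bregman-divergence error of $h$ certifies the first-order surrogate property, and your remark on the bounded sublevel set furnishing $R$, merely fill in details the paper delegates to the cited theorem of \citet{pmlr-v202-weber23a}.
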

Note that this result provides only an upper bound on the iteration complexity. We will see in the following section that, in some instances, faster, linear convergence can be obtained by leveraging special structure in the regularized objective to solve the CCCP oracle in closed form.

\subsubsection{Oracle complexity}
However, the iteration complexity alone does not provide a full characterization of the convergence rate. In order to understand the efficiency of an iterative algorithm, we further need to analyze the \emph{cost per iteration}. In the case of constrained Riemannian optimization, this cost is dominated by subroutines that enforce the constraints: In R-PGD~\citep{liu2019simple}, a projection onto the feasible region is computed, which can be costly. While R-FW~\citep{frank-wolfe} is projection-free, it requires a call to a linear oracle. This subroutine has closed form solutions or reductions to efficient oracles for specific domains and sets of constraints~\citep{frank-wolfe,scieur2023strong}, but is in general non-convex, which can present a challenge in practise. Since both approaches are \emph{Riemannian} algorithms, the respective subroutines, as well as the subsequent computation of the next iterate, require the implementation of Riemannian tools, such as exponential maps, Riemannian gradients and parallel transport operators. In contrast, CCCP requires only the implementation of Euclidean tools, which is often much faster. 

Like the Riemannian constrained optimization approaches, each CCCP iteration requires a call to a subroutine, the \emph{CCCP oracle} (see line 4 in Alg.~\ref{alg:cccp}). In some cases, it is possible to solve the minimization of the linear surrogate function therein in closed form, which renders the CCCP approach into a simple fixed-point algorithm. We will discuss examples of this form in the next section. Even if such a fixed-point algorithm cannot be derived, the oracle complexity is lower than that of the Riemannian approaches. First, the CCCP oracle is convex and no Riemannian tools are required to solve this subproblem numerically. Second, the structure of our class of regularizers, specifically that of our two canonical examples (ball and sparsity regularizers) provides additional algorithmic benefits, as we discuss below.

\subsubsection{Complexity of regularization}
Suppose our objective $\phi: \pd \to \reals$ is g-convex and DC, that is, $\phi$ is g-convex and can be written as $\phi(X) = f_1(X) + h_1(X)$ for convex functions $f_1, h_1: \pd \to \reals$. Moreover, suppose that $R:\pd \to \reals$ is a g-convex and DC regularizer that can be written as $R(X) = f_2(X) + h_2(X)$. In the case that $R(X)$ is convex then $h_2(X) = 0$.
Applying Algorithm~\ref{alg:cccp} to the structured regularized problem 
\[
\argmin_{X \in \pd} \left\{ \phi(X) + \beta R(X) = \left(f_1(X) + \beta f_2(X) \right) - \left(h_1(X) + \beta h_2(X)\right) \right\}
\]
requires solving the convex optimization problem 
\begin{equation}\label{eq:structured_cccp_oracle}
    \argmin_{X \in \pd} \left\{Q(X,Y) = \left(f_1(X) + \beta f_2(X) \right) - \left(h_1(Y) + h_2(Y) \right) - \langle \nabla (h_1 + h_2)(Y), X- Y \rangle  \right\}.
\end{equation}

In general, one can directly solve \eqref{eq:structured_cccp_oracle} with gradient descent, which amounts to computing the updates 
\begin{equation}\label{eq:}
\begin{gathered}
X_{\ell + 1} \leftarrow X_\ell - \eta \nabla_X Q(X_\ell, Y)
\\ \text{where} \qquad \nabla_X Q(X_\ell, Y) = \nabla (f_1 + f_2)(X_\ell) - \nabla (h_1 + h_2)(Y).
\end{gathered}    
\end{equation}
This requires computing the gradients $\nabla f_i$ and $\nabla h_i$ for $i = 1,2$ in each iteration. The complexity of the subroutine depends crucially on the number of calls to the corresponding gradient oracles. Choosing a regularizer that has efficiently computable gradients could allow for mitigating possible bottlenecks. We will discuss an example in the next section, where the ball constraint regularizer is parametrized via the S-divergence, which comes with numerical benefits for the gradient computation.

We contrast these considerations with the subroutines that handle constraints in R-PGD and R-FW. Both require calls to \emph{Riemannian} gradient oracles. This requires an additional projection, which introduces computational overhead (see sec.~\ref{sec:background}). Moreover, as discussed above, the subroutines generally require solving a nonconvex problem, which can be much more challenging than the CCCP oracle. An analysis of oracle complexities for certain sets of constraints in projection-free and projection-based methods can be found in~\citep{COMBETTES2021565,frank-wolfe,weber2021projection}.

\subsubsection{Ball constraint via S-divergence}\label{sec:ball_constraint_sdiv}

The computation of the Riemannian metric (Eq.~\ref{eq:riem-metric})
requires computing the generalized eigenvalues of $A$ and $B$, which introduces a computational bottleneck. To address this problem, \citep{efficientsimilarityCherian} introduced a \textit{symmetrized log-det based matrix divergence}, also known as the \textit{S-divergence} (Lem.~\ref{ex:sdiv}). Sra~\citep{sra2013sdivergence} discusses the relationship of the Riemannian metric $\delta_R$ and the S-divergence $\delta_S^2$ and its algorithmic implications. We present relevant properties of the S-divergence and its relation to the Riemannian metric $\delta_R$. 
\begin{prop}[Table~4.1 \citep{sra2013sdivergence}]\label{prop:properties_sdiv}
    Let $A,B, X \in \pd$. The S-divergence $\delta_S^2$ satisfies the following properties 
    \begin{enumerate}
        \item \textbf{Invariant Under Inversions.} $\delta_S\left( A^{-1},B^{-1}\right) = \delta_S(A,B)$
        \item \textbf{Invariant Under Conjugation.} $\delta_S(X^* A X,  X^*BX) = \delta_S(A,B)$
        \item \textbf{Bi-G-convex.} $\delta_S^2(X,Y)$ is g-convex in X,Y
        \item \textbf{Lower Bounded By Shifts.} $\delta_S^2(A+X, B+X) \leq \delta_S^2(A,B)$.
        \item \textbf{Geodesic As S-divergence. }$A \sharp B = \argmin_{X \in \pd} \delta_S^2(X,A) + \delta_S^2(X,B)$
    \end{enumerate}
    \end{prop}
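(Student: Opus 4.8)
The plan is to work throughout from the determinant form $\delta_S^2(X,Y) = \log\det\left(\frac{X+Y}{2}\right) - \frac{1}{2}\log\det X - \frac{1}{2}\log\det Y$ and to verify the five properties one at a time. The two invariances are immediate determinant algebra. For (1) I would use the identity $A^{-1}+B^{-1} = A^{-1}(A+B)B^{-1}$ together with multiplicativity of the determinant: this turns $\log\det\frac{A^{-1}+B^{-1}}{2}$ into $\log\det\frac{A+B}{2} - \log\det A - \log\det B$, and combining with $-\frac12\log\det(A^{-1}B^{-1}) = \frac12\log\det(AB)$ collapses exactly to $\delta_S^2(A,B)$. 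For (2) I would use $X^*AX + X^*BX = X^*(A+B)X$ and again multiplicativity; the contributions $\log\det(X^*)$ and $\log\det(X)$ enter with total coefficient zero and cancel, leaving $\delta_S^2(A,B)$. Since $\delta_S\ge 0$, the unsquared identities follow by taking square roots.

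The analytic heart is (3). The terms $-\frac12\log\det X$ and $-\frac12\log\det Y$ are g-linear (Example~\ref{example:g_cvx_atoms}(1)), hence g-convex, so it remains to show that $X\mapsto \log\det\left(\frac{X+Y}{2}\right)$ is g-convex for fixed $Y$. Here I would avoid the naive ``monotone $\circ$ positive-affine'' composition, which fails because $\log\det$ is Euclidean-\emph{concave}; instead I would compute the second derivative along a geodesic directly. Since this second derivative may be evaluated at any base point, and congruence (an isometry of the affine-invariant metric, changing $\log\det$ only by an additive constant) lets me take that base point to be $I$, I may assume $\gamma(t)=e^{tS}$ with $S$ symmetric. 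Writing $W=(I+Y)^{-1}$, so that $0\prec W\prec I$, a short computation gives $\frac{d^2}{dt^2}\log\det(\gamma(t)+Y)\big|_{0} = \tr(WS^2) - \tr(WSWS)$. Diagonalizing $W=\Diag(w_i)$, this equals $\sum_{i,j} w_i(1-w_j)S_{ij}^2 \ge 0$, establishing g-convexity in $X$; g-convexity in $Y$ follows by symmetry, giving bi-g-convexity.

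For (4) I would show that $t\mapsto \delta_S^2(A+tX, B+tX)$ is non-increasing on $[0,1]$. Setting $P=A+tX$, $Q=B+tX$ and differentiating gives $\tr\!\left[\bigl(\bigl(\tfrac{P+Q}{2}\bigr)^{-1} - \tfrac12 P^{-1} - \tfrac12 Q^{-1}\bigr)X\right]$, and operator convexity of the matrix inverse yields $\bigl(\tfrac{P+Q}{2}\bigr)^{-1}\preceq \tfrac12(P^{-1}+Q^{-1})$; since $X\succeq 0$, this trace is $\le 0$, so the map decreases and $\delta_S^2(A+X,B+X)\le \delta_S^2(A,B)$. For (5), I would first compute $\nabla_X\bigl[\delta_S^2(X,A)+\delta_S^2(X,B)\bigr] = (X+A)^{-1}+(X+B)^{-1}-X^{-1}$, so the stationarity condition reads $(X+A)^{-1}+(X+B)^{-1}=X^{-1}$; by the bi-g-convexity from (3) the objective is g-convex in $X$, so any stationary point is the global minimizer. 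It then remains to verify that $X=A\# B$ solves this equation: using that the equation is congruence-invariant I would reduce to $A=I$, where $A\# B = B^{1/2}$, and after simultaneously diagonalizing reduce to the scalar identity $\frac{1}{1+\sqrt\mu}+\frac{1}{\sqrt\mu+\mu}=\frac{1}{\sqrt\mu}$.

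The main obstacle is (3): the g-convexity of the ``mixture'' term $\log\det\frac{X+Y}{2}$ cannot be obtained by composing monotonicity with operator convexity of the argument, since the wrong-way concavity of $\log\det$ blocks the chaining, so it must be handled by the explicit geodesic Hessian computation above. Once (3) is in place, (5) becomes essentially a verification, while (1), (2), and (4) reduce to standard determinant and operator-inverse inequalities.
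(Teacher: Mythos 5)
Your proof is correct, but note that the paper itself does not prove this proposition at all: it imports it by citation from Sra (2013, Table 4.1), so the right comparison is with that source rather than with an in-paper argument. Your items (1), (2), and (4) coincide with the standard arguments there (determinant algebra from $A^{-1}+B^{-1}=A^{-1}(A+B)B^{-1}$ and $X^*AX+X^*BX=X^*(A+B)X$, and operator convexity of the inverse for the shift monotonicity), and all your computations check out. The genuine methodological divergence is in (3): the cited source establishes the stronger \emph{joint} geodesic convexity, via the matrix-mean inequality $(A\#_t B)+(C\#_t D)\preceq (A+C)\#_t(B+D)$ combined with monotonicity of the determinant, whereas your explicit geodesic-Hessian computation, reducing by congruence to the base point $I$ and evaluating $\tr(WS^2)-\tr(WSWS)=\sum_{i,j}w_i(1-w_j)S_{ij}^2\ge 0$ with $W=(I+Y)^{-1}$, yields only \emph{separate} g-convexity in each argument. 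Under the natural reading of ``bi-g-convex'' as separately g-convex your proof matches the statement, and separate convexity in $X$ is in fact all that this paper ever uses (for the ball-constraint regularizers and for your own derivation of (5)); just be aware that your method would need a two-variable second-derivative computation to recover the joint statement proved in the source. Your route also buys something the citation does not make explicit: since each coefficient $w_i(1-w_j)$ is strictly positive, your Hessian formula shows $\delta_S^2(\cdot,A)$ is \emph{strictly} g-convex, which is exactly what is needed---and which you should state explicitly---to justify the uniqueness implicit in writing $A\sharp B=\argmin$ in (5). Finally, your congruence-reduction of the stationarity equation $(X+A)^{-1}+(X+B)^{-1}=X^{-1}$ to the scalar identity $\frac{1}{1+\sqrt{\mu}}+\frac{1}{\sqrt{\mu}+\mu}=\frac{1}{\sqrt{\mu}}$ is correct and is consistent with the fixed-point map \eqref{eq:sqrt_fp} that the paper derives from this same oracle in Proposition~\ref{prop:fp_sqrt}.
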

Every property listed in Proposition~\ref{prop:properties_sdiv} is also satisfied by the Riemannian metric $\delta_R$, see~\citep[Table 4]{sra2013sdivergence} for more shared properties of $\delta_S^2$ and $\delta_R$. Moreover, we can relate the size of the metric balls induced by the $\delta_R$ and $\delta_S^2$ via the following proposition.
    \begin{prop}[Theorem 4.19~\citep{psra2013sdivergence}]\label{prop:sdivergence_Rdistance_bounds}
Let $A, B \in \pd$. Then we have $8 \delta_S^2(A, B) \leq \delta_R^2(A, B)$.
    \end{prop}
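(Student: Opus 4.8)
The plan is to use the congruence invariance shared by $\delta_S^2$ and $\delta_R^2$ to collapse the matrix inequality onto a single scalar inequality. Because $\delta_R$ is the affine-invariant metric (the $\Phi=$ Schatten $2$-norm instance of $d_\Phi$, so that property (1) of Theorem~\ref{theorem:dphi_properties} gives $\delta_R(MAM^*, MBM^*) = \delta_R(A,B)$) and $\delta_S^2$ is conjugation invariant by property (2) of Proposition~\ref{prop:properties_sdiv}, I would first apply the congruence $M = A^{-1/2}$ to both quantities. Since $A^{-1/2}$ is symmetric, this sends $A \mapsto I$ and $B \mapsto C \defas A^{-1/2} B A^{-1/2} \succ 0$ simultaneously for both metrics, reducing the claim to $8\,\delta_S^2(I, C) \le \delta_R^2(I, C)$.

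Next I would diagonalize $C$ and work with its eigenvalues $\lambda_1, \dots, \lambda_d > 0$. Directly from the definitions, $\delta_R^2(I,C) = \|\log C\|_F^2 = \sum_{i=1}^d (\log \lambda_i)^2$ and $\delta_S^2(I,C) = \log\det\bigl(\tfrac{I+C}{2}\bigr) - \tfrac12 \log\det C = \sum_{i=1}^d \log \tfrac{1+\lambda_i}{2\sqrt{\lambda_i}}$. Substituting $t_i \defas \log\lambda_i$ turns each S-divergence summand into $\log\cosh(t_i/2)$, since $\tfrac{1+\lambda_i}{2\sqrt{\lambda_i}} = \tfrac{e^{t_i/2} + e^{-t_i/2}}{2} = \cosh(t_i/2)$. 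Both sides are thus sums over the spectrum, so the proposition follows once I prove the scalar inequality $8\log\cosh(t/2) \le t^2$ for all $t \in \mathbb{R}$, equivalently $\log\cosh u \le \tfrac12 u^2$ after setting $u = t/2$.

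Finally I would settle the scalar inequality by elementary calculus. Let $g(u) \defas \tfrac12 u^2 - \log\cosh u$, which is even with $g(0) = 0$ and $g'(u) = u - \tanh u$. Since $\tanh 0 = 0$ and $(\tanh)'(s) = 1 - \tanh^2 s \le 1$, the fundamental theorem of calculus gives $\tanh u \le u$ for $u \ge 0$, hence $g'(u) \ge 0$ there; by evenness $g \ge 0$ everywhere. Summing the resulting per-eigenvalue bound over $i$ completes the argument.

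I do not expect a substantive obstacle here: the congruence reduction and the spectral calculation are routine, and the only analytic content is the one-line inequality $\log\cosh u \le u^2/2$, which itself reduces to $\tanh u \le u$. The points requiring the most care are confirming that both metrics are invariant under the \emph{same} congruence action (so that the simultaneous reduction to $A = I$ is legitimate) and verifying the algebraic simplification of $\delta_S^2(I,C)$ into the clean separable form $\sum_i \log\cosh(t_i/2)$.
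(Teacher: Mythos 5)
Your proof is correct. Note that the paper does not prove this proposition itself --- it is imported by citation from Sra's S-divergence paper (Theorem 4.19 there), and your argument is essentially the standard one from that source: reduce via the shared congruence invariance ($M = A^{-1/2}$, which is legitimate since $\delta_R$ and $\delta_S^2$ are both invariant under $X \mapsto MXM^*$ for any invertible $M$) to the pair $(I, C)$, split both sides over the spectrum of $C$ to get $\delta_R^2(I,C) = \sum_i (\log\lambda_i)^2$ and $\delta_S^2(I,C) = \sum_i \log\cosh\bigl(\tfrac{1}{2}\log\lambda_i\bigr)$, and finish with the sharp scalar bound $\log\cosh u \le \tfrac{1}{2}u^2$, which your monotonicity argument via $\tanh u \le u$ establishes correctly (and which shows the constant $8$ is best possible, since $\log\cosh u = \tfrac{1}{2}u^2 - O(u^4)$ near $0$).
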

    \begin{prop}\label{prop:Rball_SBall}
        Fix $\hat{\Sigma} \in \pd$ and fix $\alpha>0$. Define the  sets 
        \begin{equation*}
            \mathcal{B}_R(\hat{\Sigma}; \alpha) \defas \{A \in \pd : \delta_R(A,\hat{\Sigma}) \leq \alpha\} 
        \end{equation*}
        and 
        \begin{equation*}
            \mathcal{B}_S(\hat{\Sigma}; \alpha) \defas \{A \in \pd : \delta_S^2 (A,\hat{\Sigma}) \leq \alpha\}.
        \end{equation*} 
        Then the subset-inequality
        \begin{equation*}
            \mathcal{B}_R(\hat{\Sigma}; \alpha) \subseteq \mathcal{B}_S(\hat{\Sigma}; C\alpha) 
        \end{equation*}
        holds for $C \geq \frac{\alpha}{8}$.
    \end{prop}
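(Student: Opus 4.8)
The plan is to deduce the claimed set inclusion directly from the pointwise comparison between the two divergences supplied by Proposition~\ref{prop:sdivergence_Rdistance_bounds}, namely $8\delta_S^2(A,B) \leq \delta_R^2(A,B)$ for all $A,B \in \pd$. The idea is that once $\delta_S^2$ is controlled by $\delta_R^2$ at every pair of points, membership in a $\delta_R$-ball immediately forces membership in a suitably enlarged $\delta_S^2$-ball. No geometry beyond this one estimate is needed.

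First I would fix an arbitrary $A \in \mathcal{B}_R(\hat{\Sigma};\alpha)$, so that by definition $\delta_R(A,\hat{\Sigma}) \leq \alpha$, and hence $\delta_R^2(A,\hat{\Sigma}) \leq \alpha^2$ upon squaring (both sides being nonnegative). Next I would invoke Proposition~\ref{prop:sdivergence_Rdistance_bounds} with $B = \hat{\Sigma}$ to obtain
\[
\delta_S^2(A,\hat{\Sigma}) \;\leq\; \tfrac{1}{8}\,\delta_R^2(A,\hat{\Sigma}) \;\leq\; \tfrac{1}{8}\alpha^2 \;=\; \tfrac{\alpha}{8}\cdot\alpha .
\]
This exhibits $A$ as an element of $\mathcal{B}_S(\hat{\Sigma}; C\alpha)$ for the explicit choice $C = \tfrac{\alpha}{8}$, establishing the base case of the inclusion.

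Finally I would close the argument by noting that the $\delta_S^2$-ball is monotone in its radius: since $\delta_S^2 \geq 0$, enlarging the threshold can only add points, so $\mathcal{B}_S(\hat{\Sigma}; C'\alpha) \supseteq \mathcal{B}_S(\hat{\Sigma}; C\alpha)$ whenever $C' \geq C$. Combining this with the base case $C = \tfrac{\alpha}{8}$ yields the inclusion for every $C \geq \tfrac{\alpha}{8}$, as asserted.

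I do not anticipate a genuine obstacle here: the entire argument is a one-line chaining of the given bound followed by monotonicity in the radius. The only point demanding slight care is that the constant $C = \tfrac{\alpha}{8}$ depends on $\alpha$ itself, so the enlargement of the radius is $C\alpha = \alpha^2/8$ rather than a universal multiple of $\alpha$; this is intrinsic to the quadratic nature of the comparison $8\delta_S^2 \leq \delta_R^2$ and not a defect of the proof. One should also be careful to square the defining inequality of the $\delta_R$-ball before applying Proposition~\ref{prop:sdivergence_Rdistance_bounds}, since the latter is stated in terms of squared quantities.
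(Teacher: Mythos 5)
Your proof is correct and follows essentially the same route as the paper's: both arguments apply Proposition~\ref{prop:sdivergence_Rdistance_bounds} to a point of the $\delta_R$-ball to get $\delta_S^2(A,\hat{\Sigma}) \leq \alpha^2/8 = (\alpha/8)\cdot\alpha$, with your version squaring the ball inequality first rather than working with the square-root form $2\sqrt{2}\,\delta_S \leq \delta_R$ as the paper does. Your explicit remarks on radius monotonicity and on the $\alpha$-dependence of $C$ (so the enlarged radius is $\alpha^2/8$, not a universal multiple of $\alpha$) are accurate and, if anything, make the argument slightly more careful than the paper's.
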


\begin{proof}
    By Proposition~\ref{prop:sdivergence_Rdistance_bounds}, we have the inequality 
    \begin{equation*}
        2 \sqrt{2} \delta_S(A,B) \leq \delta_R(A,B) \qquad \forall A, B \in \pd.
    \end{equation*}
   Let $\alpha>0$ and suppose $A \in \mathcal{B}_R(\hat{\Sigma}; \alpha)$. By definition and applying the inequality above, we have
    \begin{align*}
        \begin{split}
            \delta_R(A, \hat{\Sigma}) \leq \alpha &\implies 2 \sqrt{2}\delta_S (A,\hat{\Sigma}) \leq \alpha
            \\& \implies \delta_S^2(A, \hat{\Sigma}) \leq \frac{1}{8}\alpha^2 \; .
        \end{split}
    \end{align*}
    Hence $A \in \mathcal{B}_S(\hat{\Sigma}; C\alpha)$ for any $C \geq \frac{\alpha}{8}$. Since $A \in \mathcal{B}_R(\hat{\Sigma}; \alpha)$ was arbitrarily selected we have 
    \[
    \mathcal{B}_R(\hat{\Sigma}; \alpha) \subseteq \mathcal{B}_S(\hat{\Sigma}; C\alpha) \qquad \forall C \geq \frac{\alpha}{8}.
    \]
\end{proof}
This suggests that the $S$-divergence can be leveraged for an efficient relaxation of the ball constraint regularizer: Suppose we have an optimization problem constrained to lie within a Riemannian distance ball $\mathcal{B}_R(\cdot; \alpha)$ of radius $\alpha >0$. Since the S-divergence ball $\mathcal{B}_S(\cdot; C\alpha)$ is a superset of the Riemannian distance ball, we can replace the Riemannian distance ball with the S-divergence ball with radius $C \alpha$ for some $C \geq \alpha /8.$  This alludes to a more general relaxation technique which we discuss now.

\paragraph{Computational considerations}
Computing the S-divergence $\delta_S^2(A,B)$ requires 3 Cholesky factorizations for $A+B, A$ and $B$, whereas computing the Riemannian metric requires computing generalized eigenvalues at a cost of $4d^3$ flops for positive definite matrices. The cost gap between $\delta_S^2$ and $\delta_R$ only widens when considering their gradients 
\[
\begin{aligned}
& \nabla_A \delta_R^2(A, B)=A^{-1} \log \left(A B^{-1}\right) \\
& \nabla_A \delta_S^2(A, B)=(A+B)^{-1}-\frac{1}{2} A^{-1}.
\end{aligned}
\]
This is particularly well-illustrated in Table 2~\citep{pefficientsimilarityCherian}.
Hence, in many of our applications we will replace $\delta_R$ with the computationally advantageous $\delta_S^2$.

\paragraph{Relaxing the Riemannian Ball Constraint Problem} 
Let $\phi: \pd \to \reals$ be g-convex and DC. We would like to solve the following problem

\begin{equation}\label{eq:rball_constrain_problem}
    \begin{aligned}
        &X^* \defas \argmin_{X \in \pd} \phi(X) \qquad \text{subject to } \qquad \mathcal{B}_{R}(\hat{X}, r) 
        \\& \operatorname{where} \qquad \mathcal{B}_{R}(\hat{X}, r) \defas \{X \in \pd : {\delta_R}(X, \hat{X}) \leq r\}.
    \end{aligned}
\end{equation}
We use the notation $\hat{X} \in \pd$ to denote additional information as to where the true solution $X^*$ may lie and the radius $r>0$  corresponds to the confidence of such information. In light of Proposition~\ref{prop:Rball_SBall}
we can relax Problem~\ref{eq:rball_constrain_problem} by replacing the Riemannian ball $\mathcal{B}_R(\hat{X}, r)$ with the S-divergence ball $\mathcal{B}_S(\hat{X}, r/8)$. Then we can design the corresponding relaxed structured regularization problem as 

\begin{equation}\label{eq:rball_constrain_problem}
    \begin{aligned}
        &X^* \defas \argmin_{X \in \pd} \phi(X) + \beta \delta_S^2\left(X, \hat{X} \right)
    \end{aligned}
\end{equation}
for $\beta>0.$ We remark that Problem~\ref{eq:rball_constrain_problem} can be naturally applied to maximum likelihood problems \citep{Nguyen2019CalculatingOL}. In the experiments section, we will illustrate this relaxation procedure on the example of optimistic likelihood estimation.

\section{Applications}
In this section we illustrate our structured regularization framework on the square root problem, the Karcher mean problem, and the optimistic likelihood problem. In particular, we present Euclidean algorithms to solve for \emph{global} solutions of Euclidean nonconvex (but g-convex) optimization problems that have traditionally been viewed through a Riemannian lens.

\subsection{Square Root and Karcher Mean: Fixed-Point Solvers}
We have previously discussed structured relaxations of the Karcher mean (Example~\ref{ex:karcher}) and the square root (Example~\ref{ex:sqrt}) problems using the S-divergence. Since both objectives are g-convex and DC, applying the CCCP algorithm will provably converge to the optimal solution (Theorem~\ref{prop:cccp-conv}): G-convexity ensures that every local optimum is a global one and the CCCP algorithm will always converge to one such global optimum (see section~\ref{sec:background-cccp} for details).

Since the CCCP algorithm is a Euclidean algorithm, it circumvents the computational overhead introduced by Riemannian optimization tools. Moreover, the CCCP approach gives rise to fast fixed-point algorithms, whenever the CCCP oracle can be solved in closed form.
The following propositions establish such fixed-point approaches for the square root problem and the Karcher mean problem. Recall that the square root of a SPD matrix is given by the solution to the optimization problem
\begin{equation}\label{problem:sdiv_sqrt}
    \min _{X \in \pd} \phi(X) \defas \delta_S^2(X, A)+\delta_S^2(X, I) \; ,
\end{equation}
where the objective is composed of symmetric gauge functions. This formulation allows for the following effective CCCP approach:
\begin{prop}[Fixed-point approach for Square Root \citep{psra2015matrixsquareroot}]\label{prop:fp_sqrt}
    Applying the CCCP algorithm  to the problem of computing the square root of an SPD matrix is equivalent to iterating the fixed point map \begin{equation}\label{eq:sqrt_fp}
    X \leftarrow \left[(X + A)^{-1} + (X + I)^{-1}\right]^{-1} \; .    
    \end{equation}
\end{prop}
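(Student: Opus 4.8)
The plan is to exhibit an explicit difference-of-convex decomposition of $\phi$, compute the two gradients in closed form, and show that the CCCP first-order condition rearranges into the claimed map. First I would expand the objective using the definition of the S-divergence (Lemma~\ref{ex:sdiv}):
\[
\phi(X) = \log\det\left(\tfrac{X+A}{2}\right) + \log\det\left(\tfrac{X+I}{2}\right) - \log\det X - \tfrac12\log\det A,
\]
where the last term is constant in $X$. The natural DC split is $\phi(X) = f(X) - h(X) + \text{const}$ with $f(X) = -\log\det X$ and $h(X) = -\log\det\left(\tfrac{X+A}{2}\right) - \log\det\left(\tfrac{X+I}{2}\right)$. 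Since $-\log\det$ is strictly Euclidean convex on $\pd$, and $\log\det$ composed with the affine maps $X \mapsto (X+A)/2$ and $X \mapsto (X+I)/2$ is concave, both $f$ and $h$ are convex, supplying exactly the DC structure that CCCP (Alg.~\ref{alg:cccp}) requires.

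Next I would compute the Euclidean gradients. Using $\nabla(-\log\det X) = -X^{-1}$ together with the chain rule for $\log\det$ through an affine shift, one gets $\nabla f(X) = -X^{-1}$ and $\nabla h(X) = -(X+A)^{-1} - (X+I)^{-1}$. The CCCP surrogate is $Q(X, X_k) = f(X) - h(X_k) - \ip{\nabla h(X_k)}{X - X_k}$; the terms carrying $X_k$ are affine in $X$ and $f$ is strictly convex, so $Q(\cdot, X_k)$ is strictly convex and its minimizer is the unique stationary point.

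Setting $\nabla_X Q(X_{k+1}, X_k) = \nabla f(X_{k+1}) - \nabla h(X_k) = 0$ then gives
\[
-X_{k+1}^{-1} = -(X_k+A)^{-1} - (X_k+I)^{-1},
\]
which inverts to the asserted update $X_{k+1} = \left[(X_k+A)^{-1} + (X_k+I)^{-1}\right]^{-1}$, establishing that one CCCP iteration coincides with one application of the fixed-point map.

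Finally, I would confirm that the update stays feasible: for $X_k \in \pd$ both $X_k + A$ and $X_k + I$ are positive definite, so their inverses and the sum thereof are positive definite, whence $X_{k+1} \in \pd$; equivalently, the barrier $-\log\det X$ drives the unconstrained minimizer of $Q(\cdot, X_k)$ into the interior of $\pd$, so the stationarity condition holds with no boundary or constraint terms. The only place demanding care is bookkeeping in the DC decomposition — correctly tracking which $\log\det$ terms are convex versus concave and applying the affine-map gradient of $\log\det$ — rather than any genuine difficulty; once that is in place the identification with the fixed-point map is immediate.
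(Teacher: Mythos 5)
Your proposal is correct and follows exactly the derivation the paper defers to in \citep{psra2015matrixsquareroot}: expand $\delta_S^2(X,A)+\delta_S^2(X,I)$ via the $\log\det$ formula, take the DC split with $f(X)=-\log\det X$ convex and the shifted $\log\det$ terms concave, and solve the strictly convex CCCP surrogate's stationarity condition $X_{k+1}^{-1}=(X_k+A)^{-1}+(X_k+I)^{-1}$ in closed form. Your sign bookkeeping, the gradient computations (including the cancellation of the factors of $\tfrac12$ from the affine shifts), and the feasibility argument that the update remains in $\pd$ are all accurate, so nothing is missing.
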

Recall that the Karcher mean problem~\citep{Karcher1977RiemannianCO,Jeuris2012ASA} is the solution to the following problem:
Given data $\{A_1, \ldots, A_m\} \in \pd$ and $w \in \reals^m_+$ such that $\sum_{i=1}^m w_i = 1$ we solve
\[
\min_{X \in \pd} \defas \sum_{i=1}^m w_i \delta_R^2(X,A_i) \; ,
\]
which can be rewritten as~\citep{sra2013sdivergence}
\begin{equation}\label{problem:sdiv_karcher_mean}
\min_{X \in \pd} \phi(X) \defas \sum_{i=1}^m w_i \delta_S^2(X, A_i)    \; .
\end{equation}
Then the following effective fixed-point approach can be derived:
\begin{prop}[Fixed-point approach for Karcher Mean \citep{pmlr-v202-weber23a}]\label{prop:fp_karcher}
  Applying the CCCP algorithm  to the Karcher mean problem is equivalent to the fixed-point algorithm 
    \begin{equation}\label{eq:karcher_fp}
        X \leftarrow \left[  \sum_{i=1}^m w_i \left( \frac{X+A_i}{2}\right)^{-1} \right]^{-1} \qquad k = 0 , 1, \ldots \; .
    \end{equation}
\end{prop}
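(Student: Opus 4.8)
The plan is to make the difference-of-convex decomposition of the objective explicit and then solve the resulting CCCP oracle (line~4 of Alg.~\ref{alg:cccp}) in closed form. First I would expand each summand of $\phi$ using the definition of the S-divergence from Lemma~\ref{ex:sdiv}, writing $\delta_S^2(X,A_i) = \log\det\!\left(\tfrac{X+A_i}{2}\right) - \tfrac12\log\det X - \tfrac12\log\det A_i$. Since $\log\det$ is concave on $\pd$, the term $\log\det\!\left(\tfrac{X+A_i}{2}\right)$ is concave in $X$ while $-\tfrac12\log\det X$ is convex; using $\sum_i w_i = 1$ this yields the DC decomposition $\phi(X) = f(X) - h(X)$ with $f(X) \defas -\tfrac12\log\det X$ and $h(X) \defas -\sum_{i=1}^m w_i \log\det\!\left(\tfrac{X+A_i}{2}\right)$, both convex (the $A_i$-dependent terms are constants absorbed into $h$).

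Next I would compute the gradient of $h$ by matrix calculus. Using that the Euclidean gradient of $X \mapsto \log\det(X+A_i)$ is $(X+A_i)^{-1}$ (the inner factor $\tfrac12$ contributes only an additive constant to the log-det), one obtains $\nabla h(Y) = -\sum_{i=1}^m w_i (Y+A_i)^{-1}$. Forming the CCCP surrogate $Q(X,X_k) = f(X) - h(X_k) - \langle \nabla h(X_k), X - X_k\rangle$, I note that as a function of $X$ it is $f(X)$ plus an affine term, hence strictly convex on $\pd$; therefore its unique minimizer is characterized by the stationarity condition $\nabla f(X) = \nabla h(X_k)$.

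Writing out $\nabla f(X) = -\tfrac12 X^{-1}$, this condition reads $-\tfrac12 X^{-1} = -\sum_{i=1}^m w_i (X_k+A_i)^{-1}$, which I would solve for $X$. Multiplying through by $-2$ and using the identity $2(X_k+A_i)^{-1} = \left(\tfrac{X_k+A_i}{2}\right)^{-1}$ to absorb the factor of two gives $X^{-1} = \sum_{i=1}^m w_i \left(\tfrac{X_k+A_i}{2}\right)^{-1}$, and inverting both sides recovers exactly the fixed-point map~\eqref{eq:karcher_fp}. Finally I would check feasibility: each $\tfrac{X_k+A_i}{2} \in \pd$, so its inverse lies in $\pd$, a positive-weighted sum of such inverses lies in $\pd$, and inverting once more keeps the iterate in $\pd$, so the recursion is well defined.

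The computations are routine matrix calculus, so the main thing to get right is the bookkeeping: correctly assigning the convex and concave pieces of the S-divergence and tracking the factors of $\tfrac12$ through the gradients, so that the factor-of-two cancellation produces $\left(\tfrac{X_k+A_i}{2}\right)^{-1}$ rather than $(X_k+A_i)^{-1}$. The only conceptual point requiring care is the appeal to strict convexity of the surrogate in $X$, which guarantees that the stationary point found by setting the gradient to zero is the genuine global minimizer of the CCCP oracle rather than merely a critical point.
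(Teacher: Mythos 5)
Your proposal is correct and follows precisely the derivation that the paper delegates to its citation: Proposition~\ref{prop:fp_karcher} is stated without an in-text proof, attributed to \citep{pmlr-v202-weber23a}, and your computation is the standard argument behind that result. Specifically, your DC split $f(X) = -\tfrac{1}{2}\log\det X$, $h(X) = -\sum_{i=1}^m w_i \log\det\bigl(\tfrac{X+A_i}{2}\bigr)$ (using $\sum_i w_i = 1$ and absorbing constants), the stationarity condition $\nabla f(X) = \nabla h(X_k)$, and the factor-of-two bookkeeping via $2(X_k+A_i)^{-1} = \bigl(\tfrac{X_k+A_i}{2}\bigr)^{-1}$ all check out, and exhibiting the explicit solution in $\pd$ together with strict convexity of the surrogate (which is $f$ plus an affine term) makes the argument complete.
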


\subsection{The Optimistic Likelihood Problem}\label{section:optimisic_likelihood_problem}
Next we consider the problem of computing optimistic likelihoods.
Let $\phi(x)$ denote the negative log-likelihood and suppose we want to incorporate additional side information on the solution $X^*$, such as an estimator $\hat{X} \in \pd.$ One natural formulation is the following constrained optimization problem:

\begin{equation}\label{problem:reg_additional_info}
    \begin{aligned}
        &X^* \defas \argmin_{X \in \pd} \phi(X) \qquad \text{subject to } \qquad \mathcal{B}_R(\hat{X}, r) 
        \\& \operatorname{where} \qquad \mathcal{B}_R(\hat{X}, r) \defas \{X \in \pd : \delta_R(X, \hat{X}) \leq r\} \; .
    \end{aligned}
\end{equation}
Note that $\phi(X)$ is g-convex and DC. The radius $r > 0$ can be interpreted a confidence interval for $\hat{X}$ with smaller values corresponding to higher confidence.
Incorporating such information via ball constraints can be advantageous in statistical problems, as we illustrate below.

The optimistic likelihood problem was first introduced by \citep{Nguyen2019CalculatingOL} for the special case where $\phi$ is the \emph{Gaussian} negative log-likelihood. In this section, we will develop an unconstrained formulation of~\eqref{problem:reg_additional_info} via structured regularization that can be solved efficiently using CCCP. 
In the subsequent sections we will show that our framework can extend beyond the Gaussian case to the class of Kotz distributions \citep{Fang2018}.

\subsubsection{Optimistic Gaussian Likelihood}\label{sec:GaussianMLE_intro}
 The \emph{Optimistic Gaussian Likelihood problem}~\citep{Nguyen2019CalculatingOL} concerns the following setting: Consider a set of i.i.d data points $x = \left(x_1, \ldots, x_n\right) \in \reals^d$ generated from precisely one of several Gaussian distributions $\{\mathcal{N}(0, \Sigma_c)\}_{c \in \mathcal{C}}$ with zero mean and covariance $\Sigma_c$ indexed by $c \in \mathcal{C}$ where $|C| < \infty$. Since we can parametrize the family of zero mean Gaussian distributions with $\pd$, i.e., $\mathcal{N}(0, \Sigma) \simeq \Sigma$, we denote the set of candidate distributions $\{\mathcal{N}(0, \Sigma_c)\}_{c \in \mathcal{C}}$ simply as $\{\Sigma_c\}_{c \in \mathcal{C}}$.
The goal is to determine the true Gaussian distribution by solving the following maximum likelihood problem over the class $\mathcal{C}$:
\begin{equation}\label{eq:optimistic_likelihood_naive}
    c^{\star} \in \underset{c \in \mathcal{C}}{\arg \min }\left\{\phi\left(\Sigma_c; x \right) \defas  \frac{1}{n} \sum_{k=1}^n x_k^{\top} \Sigma_c^{-1}x_k + \log \operatorname{det} \Sigma_c\right\} \; .
\end{equation}
In general, $\Sigma_c$ is unknown, but we can obtain an estimator $\hat{\Sigma}_c$ from the data. Moreover, Problem~\eqref{eq:optimistic_likelihood_naive} is highly sensitive to misspecification of the candidate distributions $\mathcal{N}(0, \Sigma_c)$. \citep{Nguyen2019CalculatingOL} addresses this by solving the following \textit{constrained g-convex} optimization problem for $c \in \mathcal{C}$:
\begin{equation}\label{eq:optimistic_likelihood_ball}
\min_{\Sigma \in \mathcal{B}_R(\hat{\Sigma}_c; \rho_c)} \phi( \Sigma; x) \qquad \text{where} \qquad  \mathcal{B}_R(\hat{\Sigma}_c; \rho_c) \defas \{\Sigma \in \pd: \delta_R\left(\Sigma, \hat{\Sigma}_c\right) \leq \rho_c\} \; .
\end{equation}
Problem~\eqref{eq:optimistic_likelihood_naive} has applications in machine learning (e.g. quadratic discriminant analysis \citep{discriminant_analysis_sapatinas}) and in statistics (e.g. Bayesian inference \citep{Price2017}). Problem~\eqref{eq:optimistic_likelihood_ball} is itself a natural formulation of maximum likelihood estimation with side information. That is, we want to compute a $\Sigma^*$ that maximizes the likelihood of observing the data $x_{1}, \ldots, x_{n}$ while staying close to $\hat{\Sigma}$ in order to leverage prior information.

The Riemannian metric on $\pd$ presents a natural notion of distance on densities that are parametrized by their covariance matrices. Among other desirable properties,  it satisfies desirable statistical properties such as invariances under congruences and affine transformations (see Proposition~\ref{prop:properties_sdiv} or Table 4.1 in~\citep{sra2013sdivergence}) 
\begin{equation}\label{eq:desirable_prop_Pd}
    \begin{aligned}
\delta_R\left(\Sigma^{-1}, \hat{\Sigma}^{-1}\right) &= \delta_R \left(\Sigma, \hat{\Sigma}\right) \\ \delta_R\left(A \Sigma A^{\top}, A \hat{\Sigma} A^{\top}\right)&=\delta_R\left(\Sigma, \hat{\Sigma}\right) \; ,
\end{aligned}
\end{equation}
for all invertible $A \in \reals^{d \times d}$ and $\hat{\Sigma}, \Sigma \in \pd$ \citep{riemanniangaussianSPD}.

In~\citep{Nguyen2019CalculatingOL}, the authors propose to solve Problem~\eqref{eq:optimistic_likelihood_ball} via R-PGD. 
However, their approach requires expensive computational operations such as inverse fractional powers of matrices arising from the projection onto $\mathcal{B}_R \subseteq \pd$. We present a relaxation of Problem~\eqref{eq:optimistic_likelihood_ball} that can be efficiently solved using the CCCP algorithm, circumventing  the need for such expensive computational machinery. 

Per the discussion of the computational advantages of the S-divergence $\delta_S^2$ (see Section~\ref{sec:ball_constraint_sdiv}) and the fact that it satisfies the desirable statistical properties of \eqref{eq:desirable_prop_Pd} (see Proposition~\ref{prop:properties_sdiv}) we can leverage Proposition~\ref{prop:Rball_SBall} to relax Problem~\eqref{eq:optimistic_likelihood_ball} as follows:
\begin{equation}\label{eq:sdiv_ball_MLE_Problem}
\begin{aligned}
    \argmin_{\Sigma \in \pd} \left\{ \phi(\Sigma) \defas \tr\left(S \Sigma^{-1}\right) + \log \det \Sigma  \right\} \qquad 
    \text{subject to} \quad \Sigma \in \mathcal{B}_{S}\left(\hat{\Sigma}; r \right) \; ,
\end{aligned}    
\end{equation}
where   $S = \frac{1}{N}\sum_{m=1}^N \left(x_m - \hat{\mu}\right)\left(x_m - \hat{\mu}\right)^\top$ denotes the sample covariance matrix and $\mathcal{B}_S\left(\hat{\Sigma};\rho\right) = \{X \in \pd : \delta_S^2(X, \hat{\Sigma}) \leq \rho \}$ is the \textit{S-divergence ball} with radius $\rho>0$ centered at $\hat{\Sigma}$.  Finally, performing structured regularization, we can rewrite the problem as
\begin{equation}\label{eq:sdiv_ball_MLE_Problem}
\begin{aligned}
    &\argmin_{\Sigma \in \pd} \left\{ \hat{\phi}(\Sigma) \defas \tr\left(S \Sigma^{-1}\right) + \log \det \Sigma  + \beta \delta_S^2\left(\Sigma, \hat{\Sigma}\right) \right\} 
\end{aligned}    
\end{equation}
for some hyperparameter $\beta >0.$ The objective $\hat{\phi}(\Sigma)$ is (strictly) g-convex and hence has a unique global minimum (if it exists). 
Solving this problem via CCCP yields Algorithm~\ref{alg:CCCP_on_MLE}. 
We argue that this approach can be used to find the global optimum.\\
\begin{cor}
    Initialize $\Sigma_0 \in \pd$ and denote $\{X_{\ell+1}\}$ as the iterates generated by Algorithm~\ref{alg:CCCP_on_MLE}. For a suitable choice of step sizes $\{\eta_{\ell}\}$, the sequence $X_{\ell+1}$ converges to the unique solution $\Sigma^*$ of Problem~\eqref{eq:sdiv_ball_MLE_Problem}.
\end{cor}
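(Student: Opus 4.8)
The plan is to present Problem~\eqref{eq:sdiv_ball_MLE_Problem} as an instance of the hypotheses of Theorem~\ref{prop:cccp-conv} and then invoke that theorem, after controlling two things that the statement hides: the smoothness constant $L$, and the inexactness of the inner solver encoded by the step sizes $\{\eta_\ell\}$. First I would establish existence and uniqueness of $\Sigma^*$. Using Lemma~\ref{ex:sdiv} to rewrite the objective as $\hat{\phi}(\Sigma)=\tr(S\Sigma^{-1})+(1-\tfrac{\beta}{2})\log\det\Sigma+\beta\log\det\bigl(\tfrac{\Sigma+\hat{\Sigma}}{2}\bigr)+\text{const}$, one checks coercivity on the open cone $\pd$: as the largest eigenvalue of $\Sigma$ diverges the log-det terms behave like $(1+\tfrac{\beta}{2})\log\det\Sigma\to+\infty$, and as the smallest eigenvalue tends to $0$ the term $\tr(S\Sigma^{-1})$ dominates and diverges. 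Hence each sublevel set $\{\Sigma:\hat{\phi}(\Sigma)\le\hat{\phi}(\Sigma_0)\}$ is a compact subset of $\pd$ bounded away from the boundary. Since every summand is g-convex (the data terms $\tr(S\Sigma^{-1})$ and $\log\det\Sigma$ by Example~\ref{example:g_cvx_atoms}, and $\delta_S^2(\cdot,\hat{\Sigma})$ by item~3 of Proposition~\ref{prop:properties_sdiv}) and the quadratic-form term is strictly g-convex, $\hat{\phi}$ is strictly g-convex, so $\Sigma^*$ exists and is unique.

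Second, I would record the DC decomposition that the algorithm uses. Again via Lemma~\ref{ex:sdiv}, write $\hat{\phi}=f-h$ with $f(\Sigma)=\tr(S\Sigma^{-1})-\tfrac{\beta}{2}\log\det\Sigma$ Euclidean convex and $h(\Sigma)=-\log\det\Sigma-\beta\log\det\bigl(\tfrac{\Sigma+\hat{\Sigma}}{2}\bigr)$ Euclidean convex and smooth, with $\nabla h(\Sigma)=-\Sigma^{-1}-\beta(\Sigma+\hat{\Sigma})^{-1}$. This is precisely the g-convex $+$ DC regime of Table~\ref{tbl:phi_hat_properties}, so the surrogate $Q(\cdot,\Sigma_k)=f(\cdot)-h(\Sigma_k)-\langle\nabla h(\Sigma_k),\cdot-\Sigma_k\rangle$ majorizes $\hat{\phi}$ with equality at $\Sigma_k$, and CCCP is monotone, $\hat{\phi}(\Sigma_{k+1})\le\hat{\phi}(\Sigma_k)$. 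The key analytic step is then to verify that $Q(\cdot,\Sigma_k)$ is a \emph{first-order surrogate} in the sense Theorem~\ref{prop:cccp-conv} requires, i.e.\ that the gap $\hat{\phi}-Q(\cdot,\Sigma_k)$ is $L$-smooth with a uniform finite $L$. By monotonicity all iterates remain in the compact sublevel set from the previous step, on which the eigenvalues of $\Sigma$ lie in a fixed interval $[m,M]\subset(0,\infty)$; there $\nabla h$ is Lipschitz, since its Jacobian is built from $\Sigma^{-1}\otimes\Sigma^{-1}$ and $(\Sigma+\hat{\Sigma})^{-1}\otimes(\Sigma+\hat{\Sigma})^{-1}$, both bounded by $1/m^2$. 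Taking $R$ as the $\delta_R$-diameter of this sublevel set, Theorem~\ref{prop:cccp-conv} gives $\hat{\phi}(\Sigma_k)-\hat{\phi}(\Sigma^*)\le \tfrac{2L\,\alpha_{\Mc}^2(R)}{k+2}$.

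Third, I would handle the step sizes, which enter because the CCCP oracle has no closed form here: setting $\nabla_\Sigma Q(\Sigma,\Sigma_k)=0$ and writing $P=\Sigma^{-1}$ yields the quadratic matrix equation $PSP+\tfrac{\beta}{2}P=\Sigma_k^{-1}+\beta(\Sigma_k+\hat{\Sigma})^{-1}$, a Riccati-type equation with no elementary solution. Algorithm~\ref{alg:CCCP_on_MLE} therefore solves each convex subproblem $\min_\Sigma Q(\Sigma,\Sigma_k)$ by gradient descent with step sizes $\{\eta_\ell\}$. On the compact sublevel set $f$ is smooth and strongly convex, so for any $\eta_\ell$ below the reciprocal of the local smoothness constant the inner iteration converges linearly to the exact oracle output; driving the inner tolerance to $0$ fast enough makes the outer scheme an inexact CCCP whose per-step error is summable, which preserves the rate above. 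Finally, function-value convergence together with strict g-convexity, uniqueness of $\Sigma^*$, and compactness of the sublevel set forces $\Sigma_{\ell+1}\to\Sigma^*$: every limit point minimizes $\hat{\phi}$, and the minimizer is unique.

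I expect the main obstacle to be establishing the uniform smoothness constant $L$ — equivalently, ruling out that the iterates drift toward $\partial\pd$, where $\nabla h(\Sigma)=-\Sigma^{-1}-\beta(\Sigma+\hat{\Sigma})^{-1}$ blows up. This is what the coercivity/monotonicity confinement to a compact region accomplishes, and it must be made precise before Theorem~\ref{prop:cccp-conv} is applicable. A secondary subtlety is propagating the inexactness of the inner gradient-descent solve through the outer bound; this is exactly what the phrase \emph{suitable choice of step sizes} is controlling, and the cleanest route is to quantify the inner accuracy needed for summable error and then cite an inexact-surrogate version of Theorem~\ref{prop:cccp-conv}.
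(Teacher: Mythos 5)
Your argument is correct and follows the same skeleton as the paper's own proof --- show the optimum is attained in $\pd$, observe that $\hat{\phi}$ is strictly g-convex and DC, and invoke Theorem~\ref{prop:cccp-conv} via \citep{pmlr-v202-weber23a} --- but the paper's version is only two sentences: it notes that the objective blows up at the boundary of the cone, recalls strict g-convexity for uniqueness, and cites the convergence theorem. Everything else you supply is left implicit in the paper, and it is genuinely needed: the coercivity computation (the combined $(1+\tfrac{\beta}{2})\log\det\Sigma$ growth at infinity and the $\tr(S\Sigma^{-1})$ blow-up near the boundary --- the paper's ``$\hat{\phi}(0)$ is undefined'' remark alone does not give attainment); the explicit DC split $f(\Sigma)=\tr(S\Sigma^{-1})-\tfrac{\beta}{2}\log\det\Sigma$, $h(\Sigma)=-\log\det\Sigma-\beta\log\det\bigl(\tfrac{\Sigma+\hat{\Sigma}}{2}\bigr)$, whose gradients match the update in Algorithm~\ref{alg:CCCP_on_MLE} exactly; the verification of the first-order-surrogate hypothesis by confining the iterates to a compact sublevel set, which is what makes the constant $L$ in Theorem~\ref{prop:cccp-conv} finite; and, above all, the inexact-CCCP treatment of the inner gradient-descent loop. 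The paper's proof never mentions the step sizes $\{\eta_\ell\}$ at all, even though the corollary is stated conditionally on a ``suitable choice'' of them, so your third step is the only part of either argument that addresses the algorithm as actually implemented rather than exact CCCP.

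Two caveats. First, both your coercivity argument and your strict g-convexity claim require $S\succ 0$: if $S$ is singular, take $\Sigma$ with a vanishing eigenvalue along a null direction of $S$; then $\tr(S\Sigma^{-1})$ stays bounded while for $\beta<2$ the term $\bigl(1-\tfrac{\beta}{2}\bigr)\log\det\Sigma\to-\infty$, so $\hat{\phi}$ is unbounded below (the classical non-existence of the Gaussian MLE for $n<d$). The paper makes the same implicit assumption, so this is not a defect relative to it, but it should be stated. Second, your assertion that the stationarity equation $PSP+\tfrac{\beta}{2}P=\Sigma_k^{-1}+\beta(\Sigma_k+\hat{\Sigma})^{-1}$ has no elementary solution is not quite right: completing the square gives $\bigl(P+\tfrac{\beta}{4}S^{-1}\bigr)S\bigl(P+\tfrac{\beta}{4}S^{-1}\bigr)=C+\tfrac{\beta^2}{16}S^{-1}$ with $C$ the right-hand side, and the Riccati equation $YSY=B$ has the unique positive definite solution $Y=S^{-1}\gm B$, so the CCCP oracle here does admit a closed form via the matrix geometric mean (at the cost of matrix square roots). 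This does not invalidate your proof --- Algorithm~\ref{alg:CCCP_on_MLE} does solve the subproblem by gradient descent, and your summable-error analysis is the right way to handle that --- but it reframes the step sizes as a computational choice rather than a necessity.
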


\begin{proof}
    Observe that $\hat{\phi}(0) = \phi(X)$ is undefined and $\phi(X)$ is continuous on $\pd$.  This implies the optimum is attained in $\pd$. Since $\hat{\phi}(\Sigma)$ is of g-convex and DC structure, the CCCP algorithm converges to the global minima \citep{pmlr-v202-weber23a}.
\end{proof}

\begin{algorithm}[H]
\small 
\begin{algorithmic}[1]
\State \textbf{Input:} $\Sigma_0, \hat{\Sigma} \in \pd$, $K, L \in \nat$, $\beta > 0$ and $\{\eta_\ell\} \subseteq \reals_{++}$
 \For{$k = 0, \ldots, K-1$}
 \State Precompute $\Sigma_k^{-1} + \beta \left( \Sigma_k + \hat{\Sigma} \right)^{-1}$ 
 \For{$\ell = 0, \ldots, L-1$}
    \State $\Sigma_{\ell+1} \leftarrow  \Sigma_\ell - \eta_\ell \left( -\Sigma_\ell^{-1} S \Sigma_\ell^{-1} - \frac{\beta}{2}\Sigma_\ell^{-1} + \Sigma_k^{-1} + \beta \left(\Sigma_k + \hat{\Sigma}\right)^{-1} \right) $
    \State Update $\Sigma_{k+1} \leftarrow \Sigma_{L}$
\EndFor
\EndFor
\State \textbf{Output:} $\Sigma_K$
\end{algorithmic}
 \caption{CCCP for Optimistic Gaussian Likelihood}\label{alg:CCCP_on_MLE}
\end{algorithm}
\subsubsection{Generalizing beyond the Gaussian setting}
In the previous section, we formulated Problem \eqref{eq:sdiv_ball_MLE_Problem} to find the maximum likelihood estimator of a \emph{Gaussian} density in the presence of side information. This formulation can be readily generalized to Kotz-type distributions \citep{Sra_conic_geometric_Opt_SPD, Fang2018} and the multivariate log-normal distribution.

\subsubsection{Kotz-type distributions}
We generalize the optimistic likelihood problem to a special class of \emph{elliptically contoured distributions}, also known as \emph{Kotz-type distributions}~\citep{Sra_conic_geometric_Opt_SPD,Fang2018}. For simplicity, we consider the case of zero-mean Kotz-type distributions.

If the density of a (zero-mean) Kotz-type distribution exists on $\reals^d$ then it is of the form 
\[
f(x;\Sigma) \propto \det (\Sigma)^{-1/2}g \big(x^\top \Sigma^{-1}x \big) \; ,
\]
where $\Sigma \in \pd$ and $g: \reals \to \reals_{++}$ is known as the \textit{density generator}.

\begin{example}
    The density generator $g(t) = \exp(-t/2)$ recovers the multivariate Gaussian distribution. 
\end{example}
In this section, we focus on the Kotz-type distribution that arise from specifying the density generator to be of the form
\[
g(t) = t^{\alpha - d/2} \exp\Bigg(-\Big(\frac{t}{b} \Big)^\beta \Bigg)
\]
for distribution parameters $\alpha \in (0, d/2], \beta, b >0$. The Kotz-type distributions with these parameters encompass many well-known instances, including Gaussian and  multivariate power exponential distributions, the multivariate-W distribution with shape parameter less than one, as well as the elliptical gamma distribution with shape parameter less than $d/2$, among others (see, e.g., sec. 5 in~\citep{Sra_conic_geometric_Opt_SPD} or \citep{Fang2018} for more examples).

Given $x_1, \ldots, x_n$ i.i.d observations sampled from a Kotz-type distribution, its negative log-likelihood assumes the form\footnote{We ignore any constants in the log-likelihood.} 

\begin{equation}\label{eq:kotz_nll}
    K(\Sigma; x_1, \ldots, x_n) = \frac{n}{2} \log \operatorname{det}(\Sigma)+\Big(\frac{d}{2}-\alpha\Big) \sum_{i=1}^n \log \big(x_i^T \Sigma^{-1} x_i\big)+\sum_{i=1}^n\Big(\frac{x_i^T \Sigma^{-1} x_i}{b}\Big)^\beta. 
\end{equation}
 In this section, we show that $K(\Sigma)$ given in \eqref{eq:kotz_nll} satisfies the g-convex and DC property for $\alpha \in (0, d/2]$ and for $b, \beta >0$. Moreover, we can find a global optimum (if it exists) using CCCP. The next two results provide sufficient conditions such that the maximum likelihood estimator (MLE) of \eqref{eq:kotz_nll} exists.

\begin{prop}[Lemma 44~\citep{Sra_conic_geometric_Opt_SPD}]\label{prop:kotz_existence_setting}
 Let the data $\mathcal{X}=\left\{x_1, \ldots, x_n\right\}$ span the whole space and for $\alpha<\frac{d}{2}$ satisfy
$$
\frac{|\mathcal{X} \cap L|}{|\mathcal{X}|}<\frac{d_L}{d-2 \alpha},
$$
where $L$ is an arbitrary subspace with dimension $d_L<d$ and $|\mathcal{X} \cap L|$ is the number of data points that lie in the subspace $L$. If $\left\|S^{-1}\right\| \rightarrow \infty$ or $\|S\| \rightarrow \infty$, then $K(S) \rightarrow \infty$.
\end{prop}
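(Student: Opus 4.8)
The plan is to read the statement as a \emph{coercivity} (properness) claim for $K$ on the open cone $\pd$: the hypothesis ``$\|S\|\to\infty$ or $\|S^{-1}\|\to\infty$'' says precisely that $S$ leaves every compact subset of $\pd$, so proving $K(S)\to\infty$ along every such escape makes the sublevel sets $\{K\le c\}$ compact and, with continuity of $K$, yields existence of the MLE. I would diagonalize the variable as $S=\sum_{j=1}^d\mu_j v_jv_j^\top$ with $\mu_1\le\cdots\le\mu_d$ and track the three pieces of $K$ separately: the barrier $\tfrac n2\log\det S=\tfrac n2\sum_j\log\mu_j$; the logarithmic term $\bigl(\tfrac d2-\alpha\bigr)\sum_i\log(x_i^\top S^{-1}x_i)$, whose coefficient is \emph{positive} since $\alpha\in(0,\tfrac d2)$; and the power term $\sum_i(x_i^\top S^{-1}x_i/b)^\beta$, which is nonnegative and so can only help.

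First I would reduce to a sequence $S_k$ leaving all compacta and put it in a rate normal form. Setting $t_k:=\max_j|\log\mu_j^{(k)}|\to\infty$ and passing to a subsequence, I may assume the eigenvectors converge, $v_j^{(k)}\to v_j$, and the scaled log-eigenvalues converge, $\log\mu_j^{(k)}=t_k\gamma_j+o(t_k)$, with $\gamma_1\le\cdots\le\gamma_d$ in $[-1,1]$ and $\max_j|\gamma_j|=1$. For each datum, lower-bounding the quadratic form by its single most singular \emph{active} term (the eigendirection of smallest rate on which $x_i$ has nonzero projection) gives $\log(x_i^\top S_k^{-1}x_i)\ge -t_k\gamma(x_i)+o(t_k)$, where $\gamma(x_i):=\min\{\gamma_j:\ v_j^\top x_i\neq 0\}$; crucially this is a clean one-sided bound that avoids any delicate accounting of how the remaining projections decay. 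Combining the three pieces yields the uniform lower bound
\[
K(S_k)\ \ge\ t_k\Bigl[\tfrac n2\sum_j\gamma_j-\bigl(\tfrac d2-\alpha\bigr)\sum_i\gamma(x_i)\Bigr]+\bigl(\text{power term}\ge 0\bigr)+o(t_k).
\]

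Then I would split into two cases. If some $\gamma_j<0$ (an eigenvalue vanishes), the span of the non-vanishing directions is a proper subspace, so the spanning hypothesis forces a datum $x_i$ with $\gamma(x_i)<0$; for it $x_i^\top S_k^{-1}x_i\gtrsim e^{t_k|\gamma(x_i)|}$, so its power term grows like $e^{\beta t_k|\gamma(x_i)|}$, an \emph{exponential} in $t_k$ that dominates the merely $O(t_k)$ contributions of the bracket, whence $K(S_k)\to\infty$ with no counting needed. If instead all $\gamma_j\ge0$ (pure blow-up), the power term is bounded and I must show the bracket is strictly positive. Here the key step is a layer-cake rewriting: with $W_s:=\operatorname{span}\{v_j:\gamma_j>s\}$ one has $\sum_j\gamma_j=\int_0^\infty\dim W_s\,ds$ and $\sum_i\gamma(x_i)=\int_0^\infty|\mathcal{X}\cap W_s|\,ds$, so the bracket equals $\int_0^\infty\bigl[\tfrac n2\dim W_s-(\tfrac d2-\alpha)|\mathcal{X}\cap W_s|\bigr]\,ds$. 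The integrand is positive exactly when $|\mathcal{X}\cap W_s|/n<\dim W_s/(d-2\alpha)$, which is the hypothesis for every proper $W_s$ and is automatic for $W_s=\reals^d$ (it reduces to $\alpha n>0$); since $\max_j\gamma_j=1$, $W_s$ is a nonzero subspace on an interval of positive length, so the bracket is strictly positive and $K(S_k)\to\infty$.

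The hard part, and the step I would expect to carry the real content, is the blow-up case: it is not a priori obvious that a condition imposed \emph{subspace by subspace} suffices to defeat arbitrary multi-scale degenerations of $S$, and the layer-cake identity is exactly what converts the per-subspace counting hypothesis into pointwise positivity of the asymptotic slope. Secondary technical care is needed to justify the rate normal form along a subsequence and to verify that the one-term lower bound for $\log(x_i^\top S_k^{-1}x_i)$, together with the exponential growth of a single power term in the vanishing case, genuinely dominates the signed $O(t_k)$ terms rather than being cancelled by them.
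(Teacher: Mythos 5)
The paper offers no proof of this statement to compare against: it is imported verbatim as Lemma~44 of \citep{Sra_conic_geometric_Opt_SPD}, so your argument can only be judged on its own merits and against the cited source. On its merits it is correct. The subsequential rate normal form ($\log\mu_j^{(k)}=t_k\gamma_j+o(t_k)$ with convergent eigenframes, by compactness of the orthogonal group and of $[-1,1]^d$) is legitimate; the one-term lower bound $\log\bigl(x_i^\top S_k^{-1}x_i\bigr)\ge -t_k\gamma(x_i)+o(t_k)$ is valid because the summands of the quadratic form are nonnegative and the limiting projection onto the chosen most-singular active direction is nonzero; and the case split closes: when some $\gamma_j<0$, the spanning hypothesis forces a datum with $\gamma(x_i)<0$ whose power term grows like $e^{\beta t_k|\gamma(x_i)|}$ and dominates the remaining terms, all of which are bounded below by $-Ct_k$; when all $\gamma_j\ge 0$, your layer-cake identities $\sum_j\gamma_j=\int_0^\infty\dim W_s\,ds$ and $\sum_i\gamma(x_i)=\int_0^\infty|\mathcal{X}\cap W_s|\,ds$ are exactly right, since orthonormality of the eigenframe gives $x_i\in W_s\iff\gamma(x_i)>s$, and the integrand's strict positivity follows from the hypothesis for proper $W_s$ and from $n\alpha>0$ for $W_s=\reals^d$, on a set of $s$ of length at least one because $\max_j\gamma_j=1$. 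This is a Kent--Tyler-style coercivity argument of the same genus as the one in the cited source, with your continuous layer-cake in the rate variable playing the role of the usual Abel summation over ordered eigenvalue indices; your formulation handles arbitrary multi-scale degenerations of $S$ uniformly, which is precisely the point you flag as the real content.

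Two small blemishes, neither fatal. First, in the pure blow-up case the sentence ``the power term is bounded'' is inaccurate: a rate $\gamma_j=0$ still permits $\mu_j^{(k)}\to 0$ subexponentially (e.g.\ $\mu_j^{(k)}=e^{-\sqrt{t_k}}$), so $x_i^\top S_k^{-1}x_i$ may diverge; but since you only use nonnegativity to discard the term from the lower bound, nothing breaks --- just say ``nonnegative, hence discardable.'' Second, make the subsequence logic explicit: you establish divergence along a further subsequence of an arbitrary subsequence, which gives $K(S_k)\to\infty$ for the full sequence by the standard argument; and note that your use of $\log\bigl(x_i^\top S^{-1}x_i\bigr)$ presupposes $x_i\neq 0$, which is implicit in the counting hypothesis (take $L=\{0\}$) and in the finiteness of $K$.
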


The sufficient condition given in Proposition~\ref{prop:kotz_existence_setting} can be satisfied in the underparametrized noisy setting. That is, the number of data points satisfies $n \leq d$ and they are perturbed by noise.

\begin{theorem}[Existence of MLE, Theorem 45~\citep{Sra_conic_geometric_Opt_SPD}]
     If the data samples satisfy Proposition~\ref{prop:kotz_existence_setting}, then the negative log-likelihood of a Kotz-type distribution~\eqref{eq:kotz_nll} has a minimizer (i.e., there exists an MLE).
\end{theorem}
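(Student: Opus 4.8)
The plan is to establish existence through the classical coercivity-plus-continuity argument: I will show that $K$ is continuous on the open cone $\pd$ and that its sublevel sets are compact, so that the extreme value theorem guarantees the infimum is attained. The substantive analytic input---coercivity---is exactly Proposition~\ref{prop:kotz_existence_setting}, which asserts that $K(\Sigma)\to\infty$ whenever $\|\Sigma\|\to\infty$ or $\|\Sigma^{-1}\|\to\infty$; the remaining work is to package this into a compactness statement and invoke a topological existence principle.

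First I would verify that $K$ in \eqref{eq:kotz_nll} is continuous on $\pd$. Each summand is a composition of continuous maps on the open cone: $\Sigma\mapsto\log\det\Sigma$ is smooth on $\pd$, matrix inversion $\Sigma\mapsto\Sigma^{-1}$ is smooth on $\pd$, and each quadratic form $\Sigma\mapsto x_i^\top\Sigma^{-1}x_i$ is therefore continuous and, since $\Sigma^{-1}\succ 0$ and the data points are nonzero, strictly positive. Hence the logarithms $\log(x_i^\top\Sigma^{-1}x_i)$ and the powers $(x_i^\top\Sigma^{-1}x_i/b)^\beta$ are well defined and continuous, so $K$ is continuous (indeed smooth) on all of $\pd$.

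Next I would turn coercivity into compact sublevel sets. Fix any reference point $\Sigma_0\in\pd$ and set $c=K(\Sigma_0)$; the sublevel set $S_c=\{\Sigma\in\pd:K(\Sigma)\le c\}$ is nonempty. By Proposition~\ref{prop:kotz_existence_setting}, neither $\|\Sigma\|$ nor $\|\Sigma^{-1}\|$ can be unbounded on $S_c$, for otherwise $K$ would exceed $c$ along a sequence in $S_c$. Consequently there exist constants $0<m\le M<\infty$ with $mI\preceq\Sigma\preceq MI$ for every $\Sigma\in S_c$, so $S_c$ is contained in $\mathcal{K}=\{\Sigma\in\mathbb{H}_d: mI\preceq\Sigma\preceq MI\}$. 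This $\mathcal{K}$ is closed and bounded in the finite-dimensional space $\mathbb{H}_d$, hence compact, and every element of $\mathcal{K}$ has eigenvalues bounded below by $m>0$, so $\mathcal{K}\subset\pd$. Since $S_c=\mathcal{K}\cap K^{-1}((-\infty,c])$ is the intersection of the compact set $\mathcal{K}$ with a closed set (by continuity of $K$), it is itself compact.

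Finally, the extreme value theorem yields a point $\Sigma^\star\in S_c$ minimizing the continuous function $K$ over the nonempty compact set $S_c$. This $\Sigma^\star$ is a global minimizer over $\pd$: any $\Sigma\notin S_c$ satisfies $K(\Sigma)>c\ge K(\Sigma^\star)$, while for $\Sigma\in S_c$ minimality holds by construction. The main obstacle---coercivity on the open cone---is handled entirely by Proposition~\ref{prop:kotz_existence_setting}, so the only real care needed is to observe that $\pd$ is open and that coercivity must therefore control the behavior of $K$ both as $\Sigma$ escapes to infinity and as $\Sigma$ degenerates toward the singular boundary; it is precisely the two-sided conclusion ($\|\Sigma\|\to\infty$ together with $\|\Sigma^{-1}\|\to\infty$) of the cited proposition that supplies the uniform eigenvalue bounds making $S_c$ a compact subset of $\pd$ rather than merely a bounded one.
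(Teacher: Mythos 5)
Your proof is correct: the coercivity in Proposition~\ref{prop:kotz_existence_setting} forces uniform eigenvalue bounds $mI \preceq \Sigma \preceq MI$ on a nonempty sublevel set, making it a compact subset of the open cone $\pd$, and continuity of $K$ plus the extreme value theorem then yields a global minimizer. The paper itself offers no proof---it imports the statement as Theorem~45 of \citep{Sra_conic_geometric_Opt_SPD}---and your coercivity-plus-Weierstrass argument is exactly the standard derivation behind that citation, so the only thing worth noting is the implicit (and harmless, given the spanning assumption) requirement that each $x_i \neq 0$ so that the logarithmic terms in \eqref{eq:kotz_nll} are finite.
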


The main result in the section is the following proposition.

\begin{prop}\label{prop:kotz_gcvx_dc}
    Let $\alpha \leq d/2$ and $b, \beta >0$ be fixed distribution parameters for a Kotz-type distribution. Then its negative log-likelihood given by \eqref{eq:kotz_nll} is g-convex and DC. 
\end{prop}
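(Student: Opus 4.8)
The plan is to verify the two assertions—geodesic convexity and DC structure—separately, handling the negative log-likelihood \eqref{eq:kotz_nll} term by term. Write $K = T_1 + T_2 + T_3$ with $T_1 = \frac{n}{2}\log\det\Sigma$, $T_2 = \big(\frac{d}{2}-\alpha\big)\sum_i \log(x_i^\top\Sigma^{-1}x_i)$, and $T_3 = \sum_i (x_i^\top\Sigma^{-1}x_i/b)^\beta$. The hypothesis $\alpha \le d/2$ enters exactly once: it guarantees the coefficient $\frac{d}{2}-\alpha \ge 0$, so that $T_2$ is a nonnegative combination of its summands and both convexity notions are preserved under the sum.

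For g-convexity, I would invoke the DGCP calculus of Section~\ref{section:generating_reg_SG}. The term $T_1$ is g-linear by Example~\ref{example:g_cvx_atoms}(1). Each summand $\log(x_i^\top\Sigma^{-1}x_i)$ of $T_2$ is g-convex by Example~\ref{example:g_cvx_atoms}(7) (a single vector, $r=-1$), so $T_2$ is g-convex since $\frac{d}{2}-\alpha \ge 0$. For $T_3$ I would write $(x_i^\top\Sigma^{-1}x_i)^\beta = \exp\big(\beta\log(x_i^\top\Sigma^{-1}x_i)\big)$; since $\log(x_i^\top\Sigma^{-1}x_i)$ is g-convex and $t\mapsto\exp(\beta t)$ is nondecreasing and Euclidean convex, Proposition~\ref{prop:cvx_gcvx_composition} yields g-convexity of each summand, hence of $T_3$. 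Summing the three g-convex pieces gives g-convexity of $K$.

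The DC claim is where the real work lies, and the main obstacle is the power term $T_3$ when $0<\beta<1$, since $t\mapsto t^\beta$ is then concave and the naive composition fails. The key lemma I would establish is that $\Sigma \mapsto \log(x^\top\Sigma^{-1}x)$ is \emph{Euclidean} convex (equivalently, that $x^\top\Sigma^{-1}x$ is Euclidean log-convex). To prove this I would use the variational identity $(x^\top\Sigma^{-1}x)^{-1} = \min_{v:\, x^\top v = 1} v^\top\Sigma v$, which exhibits $(x^\top\Sigma^{-1}x)^{-1}$ as a pointwise infimum of functions linear in $\Sigma$, hence concave and positive; composing with the convex nonincreasing map $-\log$ shows that $\log(x^\top\Sigma^{-1}x) = -\log\big(\min_{x^\top v=1} v^\top\Sigma v\big)$ is convex.

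With this lemma in hand the DC decomposition is immediate. Because $\log(x_i^\top\Sigma^{-1}x_i)$ is Euclidean convex, $T_2$ (a nonnegative combination) is Euclidean convex. Moreover log-convexity gives $(x_i^\top\Sigma^{-1}x_i)^\beta = \exp\big(\beta\log(x_i^\top\Sigma^{-1}x_i)\big)$ convex for \emph{every} $\beta>0$, being the composition of the convex increasing $\exp$ with the convex function $\beta\log(x_i^\top\Sigma^{-1}x_i)$; so $T_3$ is Euclidean convex, which is precisely what resolves the $\beta\in(0,1)$ obstruction. Finally $T_1 = \frac{n}{2}\log\det\Sigma$ is Euclidean concave, so $H \defas -\frac{n}{2}\log\det\Sigma$ is convex and smooth on $\pd$. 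Setting $F \defas T_2 + T_3$, we obtain $K = F - H$ with $F,H$ Euclidean convex and $H$ smooth, the required DC decomposition (Section~\ref{sec:background-cccp}). Combined with the first part, $K$ is g-convex and DC.
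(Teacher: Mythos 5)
Your proposal is correct and follows the paper's overall strategy---the same three-term split of \eqref{eq:kotz_nll}, g-convexity via the DGCP atoms and composition rules, and the DC decomposition with $-\frac{n}{2}\log\det\Sigma$ as the smooth convex part---but it diverges from the paper at the one point that matters, and the divergence is to your credit. The paper's Lemma~\ref{lemma:log_matrix_frac_gcvx} proves Euclidean convexity of $\sum_i \log(x_i^\top\Sigma^{-1}x_i)$ by a block-matrix computation that asserts $\det(Y^\top B Y) = \det(B)\det(YY^\top)$ and concludes $\sum_i \log(x_i^\top A^{-1}x_i) = -n\log\det A + C$; this identity is false (the dimensions in the determinant factorization do not match, $YY^\top$ is rank-deficient so $\det(YY^\top)=0$, and the conclusion would make the sum depend on $A$ only through $\det A$, contradicted already by $d=2$, $x=e_1$, $A = \mathrm{diag}(a,a^{-1})$). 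Your variational argument---$(x^\top\Sigma^{-1}x)^{-1} = \min_{v:\,x^\top v=1} v^\top\Sigma v$ is an infimum of functions linear in $\Sigma$, hence concave and positive, and composing with the convex nonincreasing $-\log$ gives convexity of $\log(x^\top\Sigma^{-1}x)$---is a correct and self-contained replacement, so your write-up actually repairs a gap in the paper's proof while reaching the same lemma; everything downstream (Euclidean convexity of $T_3 = \sum_i\exp(\beta\log(x_i^\top\Sigma^{-1}x_i))$ for \emph{all} $\beta>0$, which is exactly how the paper's Lemma~\ref{lemma:matrix_frac_power_gcvx} also circumvents the $\beta\in(0,1)$ obstruction) then matches the paper. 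One further minor difference: for the g-convexity of $T_3$ you compose the g-convex atom $\log(x_i^\top\Sigma^{-1}x_i)$ (Example~\ref{example:g_cvx_atoms}(7)) with $t\mapsto e^{\beta t}$ via Proposition~\ref{prop:cvx_gcvx_composition}, whereas the paper chains Proposition~\ref{prop:gcvx_affine_positive} (positive linear map composed with the g-convex nondecreasing $t\mapsto t^\beta$, proved by weighted AM--GM) with the inverse substitution of Proposition~\ref{lemma:inverse_gcvx}; your route is shorter and reuses the same composition rule for both convexity notions, while the paper's route illustrates more of the DGCP calculus.
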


We remark that Proposition~\ref{prop:kotz_gcvx_dc} was first established in \citep{Sra_conic_geometric_Opt_SPD}. However, our proof of Proposition~\ref{prop:kotz_gcvx_dc} places an emphasis on establishing the g-convex and DC structure of the Kotz distribution via a DGCP analysis. We believe that such an approach will be fruitful in establishing g-convexity and DC structure for other objective functions.

The following lemmas will help us establish Proposition~\ref{prop:kotz_gcvx_dc}.

\begin{lem}\label{lemma:log_matrix_frac_gcvx}
    Let $\{x_1, \ldots, x_n\}\subseteq \reals^d \setminus \{0\}$ be fixed nonzero vectors.
    For $0 < \alpha \leq d/2$ the function $f:\pd \to \reals_{++}$ defined by 
    \[
    f(\Sigma) \defas \Big(\frac{d}{2}-\alpha\Big) \sum_{i=1}^n \log \left(x_i^T \Sigma^{-1} x_i\right)
    \]
    is Euclidean convex and (strictly) g-convex.
\end{lem}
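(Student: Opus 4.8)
My plan is to establish the two claims --- Euclidean convexity and geodesic convexity of $f(\Sigma)$ --- separately, since they rely on different machinery and, as the remark after the g-convexity proposition emphasized, neither property implies the other.

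For the \emph{Euclidean convexity} claim, I would reduce to a single term and show that $\Sigma \mapsto \log(x^\top \Sigma^{-1} x)$ is Euclidean convex for each fixed nonzero $x \in \reals^d$; the full sum then follows by nonnegativity of the scalar $(d/2 - \alpha) \geq 0$ and closure of convex functions under nonnegative linear combinations. The function $\Sigma \mapsto x^\top \Sigma^{-1} x$ is a matrix fractional function, which is well known to be convex on $\pd$ (it is the composition of the jointly convex map $(\Sigma, x) \mapsto x^\top \Sigma^{-1} x$ with a fixed $x$). However, convexity of the inner function does not by itself yield convexity after applying $\log$, since $\log$ is concave and increasing. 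The cleaner route is to observe that $g(\Sigma) \defas x^\top \Sigma^{-1} x$ is not merely convex but \emph{log-convex} on $\pd$: writing $g(\Sigma) = \trace(\Sigma^{-1} x x^\top)$, one can verify log-convexity along Euclidean line segments $\Sigma_t = (1-t)\Sigma_0 + t\Sigma_1$ directly, or invoke the standard fact that $\Sigma \mapsto x^\top \Sigma^{-1} x$ is operator-log-convex. Since $\log$ of a log-convex function is convex by definition, each summand is Euclidean convex, and so is $f$.

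For the \emph{geodesic convexity} claim, I would use the DGCP atoms and rules recalled earlier in the excerpt. By Example~\ref{example:g_cvx_atoms}, item 6, the function $\Sigma \mapsto y^\top \Sigma^{-1} y$ is (strictly) g-convex for any nonzero $y$, and by item 7 the log-of-sum form $\log(\sum_i h_i^\top \Sigma^{-1} h_i)$ is (strictly) g-convex; specializing to a single vector, $\Sigma \mapsto \log(x_i^\top \Sigma^{-1} x_i)$ is (strictly) g-convex. Alternatively, I can apply Proposition~\ref{prop:cvx_gcvx_composition}: the map $\Sigma \mapsto x_i^\top \Sigma^{-1} x_i$ is g-convex (atom 6) and positive-valued, and $h(t) = \log t$ is nondecreasing, but here I must be careful because $\log$ is \emph{concave}, not convex, so Proposition~\ref{prop:cvx_gcvx_composition} does not directly apply. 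The clean justification is therefore the direct appeal to atom 7 of Example~\ref{example:g_cvx_atoms}, which already packages exactly this log-matrix-fraction structure as a g-convex atom. Summing over $i$ and scaling by the nonnegative constant $(d/2 - \alpha)$ preserves g-convexity by the closure rules (Proposition on nonnegative combinations of g-convex functions), giving g-convexity of $f$; strictness is inherited as long as at least one term is present and $\alpha < d/2$, with the boundary case $\alpha = d/2$ yielding $f \equiv 0$, which is trivially (non-strictly) g-convex.

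The main obstacle I anticipate is the Euclidean-convexity half, specifically getting the composition right: naive application of ``increasing-convex composed with convex'' fails because $\log$ is concave. The key insight that unlocks the proof is to upgrade convexity of the matrix fractional function to \emph{log-convexity}, after which applying $\log$ is immediate. The g-convexity half, by contrast, should be essentially a bookkeeping exercise once the correct DGCP atom (Example~\ref{example:g_cvx_atoms}, item 7) is identified.
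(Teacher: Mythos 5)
Your proposal is correct, and its geodesic-convexity half is exactly the paper's argument: the paper likewise invokes item 7 of Example~\ref{example:g_cvx_atoms} (specialized to a single vector $h=x_i$ with $r=-1$) to get strict g-convexity of each summand $\log(x_i^\top \Sigma^{-1} x_i)$ and then closes under conic combinations; your observation that strictness degenerates at $\alpha = d/2$, where $f \equiv 0$, is a fine point the paper glosses over. The Euclidean-convexity half, however, takes a genuinely different route from the paper --- and a sounder one. The paper rewrites $\sum_i \log(x_i^\top A^{-1} x_i)$ as $\log\det(Y^\top B Y)$ with $B = \diag(A^{-1},\ldots,A^{-1})$ and a tall $dn\times n$ block matrix $Y$, and then factors $\det(Y^\top B Y) = \det(B)\det(YY^\top)$; this identity is invalid for rectangular $Y$ (indeed $YY^\top$ is a singular $dn\times dn$ matrix), and the paper's resulting claim that $\sum_i \log(x_i^\top A^{-1}x_i) = -n\log\det(A) + C(x_1,\ldots,x_n)$ is false --- take $n=1$, $x=e_1$, $A=\diag(a,b)$: the left side is $-\log a$ while the right side is $-\log(ab)+C$. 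Your log-convexity route avoids this entirely and does work: with $\Sigma_t = (1-t)\Sigma_0 + t\Sigma_1$, $g(t) = x^\top \Sigma_t^{-1}x$, $u = \Sigma_t^{-1/2}x$, and $M = \Sigma_t^{-1/2}(\Sigma_1-\Sigma_0)\Sigma_t^{-1/2}$, one computes $g = u^\top u$, $g' = -u^\top M u$, $g'' = 2\,u^\top M^2 u$, so that $g''g - (g')^2 = 2(u^\top M^2 u)(u^\top u) - (u^\top M u)^2 \geq 0$ by Cauchy--Schwarz (as $M$ is symmetric), whence $g$ is log-convex along every Euclidean segment and $\log g$ is convex; summing and scaling by the nonnegative weight $(d/2 - \alpha)$ finishes. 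You were also right to flag that Proposition~\ref{prop:cvx_gcvx_composition} cannot be used here since $\log$ is concave. The one improvement I would ask for is to include this two-line Cauchy--Schwarz verification explicitly rather than citing log-convexity as ``standard''; with that spelled out, your argument is complete and, on the Euclidean half, actually repairs a gap in the paper's own proof.
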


\begin{proof}
    To establish g-convexity, we observe that the function $g_i(\Sigma) = \log (x_i^T \Sigma^{-1} x_i)$ is a strictly g-convex atom (see item 7 in Example~\ref{example:g_cvx_atoms}). Hence 
    \[
    f(\Sigma) =  \Big(\frac{d}{2}-\alpha\Big)  \sum_{i=1}^n g_i(\Sigma) 
    \]
    is strictly g-convex since conic combinations preserve (strict) g-convexity. 

    To establish Euclidean convexity of the component functions we express \(\sum_{i=1}^n \log(x_i^T A^{-1} x_i)\) as a log determinant of a product of matrices. Define a block diagonal matrix \(B\) such that
   \[
   B = \text{diag}(A^{-1}, A^{-1}, \ldots, A^{-1}) \; ,
   \]
   where each of the \(n\) blocks consists of the matrix \(A^{-1}\). Thus, \(B\) is a \(dn \times dn\) matrix. Now define a \(dn \times n\) matrix \(Y\) by stacking the vectors \(x_i\) in a specific block form:
   \[
   Y = \begin{bmatrix}
   x_1 & 0 & 0 & \cdots & 0 \\
   0 & x_2 & 0 & \cdots & 0 \\
   0 & 0 & x_3 & \cdots & 0 \\
   \vdots & \vdots & \vdots & \ddots & \vdots \\
   0 & 0 & 0 & \cdots & x_n
   \end{bmatrix}
   \]
   Here, each \(x_i\) is a \(d\)-dimensional column vector, and there are \(n\) such vectors. Consider the matrix product \(Y^T B Y\). This is an \(n \times n\) matrix where the \(i\)th diagonal element of \(Y^T B Y\) is given by:
   \[
   (Y^T B Y)_{ii} = x_i^T A^{-1} x_i
   \]
   and the off-diagonal elements are zero because of the block structure of \(Y\) and \(B\). Therefore,
  $Y^T B Y = \diag (x_1^T A^{-1} x_1, x_2^T A^{-1} x_2, \ldots, x_n^T A^{-1} x_n)$. Taking the logarithm of both sides, we get:
   \[
   \log(\det(Y^T B Y)) = \log\Big(\prod_{i=1}^n (x_i^T A^{-1} x_i)\Big) \; .
   \]
   Standard logarithm laws imply $\log(\det(Y^T B Y)) = \sum_{i=1}^n \log(x_i^T A^{-1} x_i)$.
We can prove the Euclidean convexity of the right hand side as follows:
\[
\begin{aligned}
    \sum_{i=1}^n \log(x_i^T A^{-1} x_i) &= \log\left(\det(Y^T B Y)\right)
    \\&=\log \left(\det(B) \det (YY^\top\right)
    \\&= \log \det (B) + \log \det Y^\top Y
    \\& = \log \det \left(\diag(A^{-1}, \ldots, A^{-1})\right) + \log \det \left( \diag(\|x_1\|^2, \ldots, \|x_n\|^2)\right)
    \\&= - \log \det \left(\diag(A, \ldots, A)\right) + C(x_1, \ldots,x_n)
    \\&= - \log \left(\det(A)\right)^n + C(x_1, \ldots,x_n)
    \\&= - n\log \det(A) + C(x_1, \ldots, x_n). 
\end{aligned}
\]
Since $A \mapsto - \log \det(A)$ is Euclidean convex on $\pd$ then it follows $f(A)$ is convex.
\end{proof}

\begin{lem}\label{lemma:matrix_frac_power_gcvx}
    Let $\{x_1, \ldots x_n\} \subseteq \reals^{d} \setminus \{0\}$ be nonzero vectors and fix $\beta > 0$. The function $g: \pd \to \reals_{++}$ defined by
    \[
    g(\Sigma) = \sum_{i=1}^n \left(\frac{x_i^\top \Sigma^{-1} x_i}{b }\right)^{\beta}
    \]
    is Euclidean convex and g-convex.
\end{lem}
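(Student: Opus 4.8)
The plan is to reduce both claims to a single structural fact: the matrix fractional function $q_i(\Sigma) \defas x_i^\top \Sigma^{-1} x_i$ is \emph{log-convex} in both the Euclidean and the geodesic sense, after which the exponent $\beta$ is absorbed uniformly (for every $\beta > 0$) through the identity
\[
\left(\frac{q_i(\Sigma)}{b}\right)^\beta = \exp\!\Big(\beta \log q_i(\Sigma) - \beta \log b\Big).
\]
This route is exactly what lets us treat $\beta \in (0,1)$ and $\beta \geq 1$ on equal footing: the naive composition of the convex map $q_i$ with $t \mapsto t^\beta$ fails for $\beta < 1$, since $t^\beta$ is then concave, but composing the \emph{log-convex} $q_i$ with the convex, nondecreasing map $\exp(\beta\,\cdot\,)$ succeeds for all $\beta > 0$. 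Finally, $g = \sum_i (q_i/b)^\beta$ inherits each convexity notion because conic combinations preserve both Euclidean convexity and g-convexity.

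For g-convexity, I would invoke item~7 of Example~\ref{example:g_cvx_atoms} applied to a single vector, which gives that $\log(x_i^\top \Sigma^{-1} x_i) = \log q_i(\Sigma)$ is (strictly) g-convex. Scaling by $\beta > 0$ and subtracting the constant $\beta \log b$ preserves g-convexity, so $\Sigma \mapsto \beta \log q_i(\Sigma) - \beta \log b$ is g-convex. Since $\exp:\reals \to \reals_{++}$ is Euclidean convex and nondecreasing, Proposition~\ref{prop:cvx_gcvx_composition} yields that $\exp(\beta \log q_i - \beta \log b) = (q_i/b)^\beta$ is g-convex, and summing over $i$ completes the geodesic half.

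The Euclidean half hinges on proving that $q_i$ is log-convex on $\pd$, which is the main obstacle and admits no one-line composition argument. I would verify it along an arbitrary line $\Sigma(t) = \Sigma_0 + tH$ (with $H = H^\top$ and $t$ restricted so that $\Sigma(t) \succ 0$). Writing $y \defas \Sigma_0^{-1/2} x_i$ and diagonalizing $M \defas \Sigma_0^{-1/2} H \Sigma_0^{-1/2} = \sum_k \mu_k e_k e_k^\top$, one obtains
\[
q_i(\Sigma(t)) = y^\top (I + tM)^{-1} y = \sum_k \frac{c_k}{1 + t\mu_k}, \qquad c_k \defas (y^\top e_k)^2 \geq 0 .
\]
Each summand is log-convex in $t$ because $\tfrac{d^2}{dt^2}\big[-\log(1+t\mu_k)\big] = \mu_k^2/(1+t\mu_k)^2 \geq 0$, and a sum of log-convex functions is log-convex (via H\"older's inequality). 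Hence $t \mapsto \log q_i(\Sigma(t))$ is convex, i.e. $\log q_i$ is Euclidean convex. Then $\beta \log q_i - \beta\log b$ is convex, and composing it with the convex, nondecreasing $\exp$ gives Euclidean convexity of $(q_i/b)^\beta$; a final sum over $i$ yields Euclidean convexity of $g$. The delicate point to flag is precisely this passage through log-convexity: mere convexity of $q_i$ would be insufficient for $\beta \in (0,1)$, whereas log-convexity is exactly strong enough to cover all $\beta > 0$ simultaneously.
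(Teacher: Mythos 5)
Your proof is correct, but it takes a route that differs from the paper's in both halves. For g-convexity, the paper composes the power map $t \mapsto t^{\beta}$ (shown to be g-convex and nondecreasing on $\reals_{++}$ via the weighted AM--GM inequality) with the positive linear atom $\Sigma \mapsto x_i^{\top}\Sigma x_i$ using Proposition~\ref{prop:gcvx_affine_positive}, and then invokes the inversion rule (Proposition~\ref{lemma:inverse_gcvx}) to pass from $\Sigma$ to $\Sigma^{-1}$; you instead start from the log-atom $\log(x_i^{\top}\Sigma^{-1}x_i)$ (item~7 of Example~\ref{example:g_cvx_atoms} with a single vector) and compose with $\exp(\beta\,\cdot\,)$ via Proposition~\ref{prop:cvx_gcvx_composition}. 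Both are legitimate DGCP derivations; the paper's version exercises the positive-linear-map and inversion rules, while yours funnels everything through one $\exp$--$\log$ composition, which has the advantage of making the g-convex and Euclidean arguments structurally identical. For Euclidean convexity, the paper also writes $(x_i^{\top}\Sigma^{-1}x_i)^{\beta} = \exp\bigl(\beta \log(x_i^{\top}\Sigma^{-1}x_i)\bigr)$ but then simply cites Lemma~\ref{lemma:log_matrix_frac_gcvx} for convexity of the inner function, whereas you prove log-convexity of $q_i(\Sigma) = x_i^{\top}\Sigma^{-1}x_i$ from scratch by restricting to a line $\Sigma_0 + tH$, diagonalizing $\Sigma_0^{-1/2}H\Sigma_0^{-1/2}$, and writing $q_i$ as a nonnegative combination $\sum_k c_k(1+t\mu_k)^{-1}$ of log-convex scalar functions. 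This self-contained argument is in fact more robust than the paper's: the proof of Lemma~\ref{lemma:log_matrix_frac_gcvx} given in the paper passes through the identity $\log\det(Y^{\top}BY) = \log\det B + \log\det(YY^{\top})$ with a non-square $Y$, which is invalid (its conclusion $\sum_i \log(x_i^{\top}A^{-1}x_i) = -n\log\det A + C$ is false in general, as one sees already for diagonal $A$ with all $x_i = e_1$), so your line-restriction proof not only covers all $\beta > 0$ uniformly but also supplies a correct justification for the ingredient the paper's proof relies on. Your observation that plain convexity of $q_i$ would not suffice for $\beta \in (0,1)$, while log-convexity does, is exactly the right point to flag.
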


\begin{proof}
    Without loss of generality we assume $b=1$ (since $b>0$ it can be absorbed by the vectors $x_i$). Define the functions 
    $h_i(\Sigma) := {x_i^\top \Sigma x_i}$ and $g(t) := t^{\beta}$.
    Observe that $h_i(\Sigma)$ is a g-convex atom (see Example~\ref{example:g_cvx_atoms}(6)) for $1 \leq i \leq n$, and in particular it is a positive linear map. Also $g(t)$ is g-convex and non-decreasing on the positive reals since for all $a,b >0$ and we have
    \[
    g\left(a \sharp_\theta b\right)=g\left(a^{1-\theta} b^\theta\right)=\left(a^\beta\right)^{1-\theta}\left(b^\beta\right)^\theta \leq(1-\theta) a^\beta+\theta b^\beta \quad \forall \theta \in[0,1] \; ,
    \]
    which follows from the (weighted) AM-GM inequality. Hence by Proposition~\ref{prop:gcvx_affine_positive} the function 
    \[
    g \circ h_i (\Sigma) = \left(x_i^\top \Sigma x_i \right)^\beta
    \]
    is g-convex. By applying Proposition~\ref{lemma:inverse_gcvx} the function 
    \[
    \psi_i(\Sigma) = g \circ h_i(\Sigma^{-1}) =  \left(x_i^\top \Sigma^{-1}x_i\right)^\beta
    \]
    is g-convex.    To establish Euclidean convexity, observe that 
    \begin{align*}
        g_i(\Sigma) &= \left( x_i^\top \Sigma^{-1}x_i \right)^{\beta} = \exp \bigg( \log \left( x_i^\top \Sigma^{-1}x_i \right)^{\beta }  \bigg)  \\
        &= \exp \bigg( \beta \log \left( x_i^\top \Sigma^{-1}x_i \right)  \bigg)  = \exp(\beta \ell_i(\Sigma)) \; ,
    \end{align*}
    where $\ell_i(\Sigma) = \log \left(x_i^\top \Sigma^{-1} x_i\right)$ is Euclidean convex by Lemma~\ref{lemma:log_matrix_frac_gcvx}.
     Since the exponential function $t \mapsto \exp(\beta t)$ is monotonically increasing and Euclidean convex on $\reals_{++}$ we have that
   $g_i(\Sigma) = \exp (\beta \ell_i(\Sigma))$ is convex for each $1 \leq 1 \leq n$ which follows by standard convex compositional rules. Thus we conclude that 
    \[
    g(\Sigma) = \sum_{i=1}^n \exp(\beta  
    \ell_i(\Sigma)) =\sum_{i=1}^n\left(\frac{x_i^T \Sigma^{-1} x_i}{b}\right)^\beta
    \]
    is Euclidean convex.
\end{proof}
We can now show Proposition~\ref{prop:kotz_gcvx_dc}.
\begin{proof}
The g-convexity of $K(\Sigma)$ follows from the fact that $\Sigma \mapsto \log \det \Sigma$ is g-linear and applying Lemmas~\ref{lemma:log_matrix_frac_gcvx},\ref{lemma:matrix_frac_power_gcvx}.
The DC structure of $K(\Sigma)$ follows from the fact that $\Sigma \mapsto \log \det (\Sigma)$ is concave. The second summand is convex by Lemma~\ref{lemma:log_matrix_frac_gcvx}. The third summand is convex by Lemma~\ref{lemma:matrix_frac_power_gcvx}. Hence the Kotz-type likelihood is a difference of convex functions.
\end{proof}
We remark that Section 5.4 of \citep{Sra_conic_geometric_Opt_SPD} provides a fixed-point algorithm to efficiently solve the problem $\argmin_{\Sigma \in \pd} K(\Sigma)$. However, we are interested in solving the corresponding optimistic likelihood problem
    \begin{equation}\label{eq:kotz_sdiv}
        \argmin_{\Sigma \in \pd} K(\Sigma) + \gamma \delta_S^2(\Sigma, \hat{\Sigma}) 
    \end{equation}
    for some fixed $\hat{\Sigma} \in \pd$ and $\gamma > 0$. We apply the CCCP algorithm, where we solve the surrogate minimization problem (CCCP oracle) via gradient descent (see Algorithm~\ref{alg:CCCP_on_Kotz}).  Note that Algorithm~\ref{alg:CCCP_on_Kotz} is structurally similar to Algorithm~\ref{alg:CCCP_on_MLE} where we replaced the sample covariance $S = \frac{1}{n}X X^\top$ with the matrix $X D_{\ell} X^\top$. Since the objective \eqref{eq:kotz_sdiv} is g-convex and DC, Algorithm~\ref{alg:CCCP_on_Kotz} converges to the global optimum whenever it exists.

\begin{algorithm}[H]
\small 
\begin{algorithmic}[1]
    \State \textbf{Input:} $\Sigma_0, \hat{\Sigma} \in \pd$, $K, L \in \nat$, $\alpha \in (0, d/2)$, $\beta, \gamma > 0$ and $\{\eta_\ell\} \subseteq \reals_{++}$
\State Set $D_\ell \defas \diag\left(\frac{\alpha - \frac{d}{2}}{x_i^\top \Sigma_\ell^{-1}x_i} - \frac{\beta}{b^\beta \left(x_i^\top \Sigma_\ell^{-1} x_i\right)^{\beta - 1}} : 1 \leq i \leq d\right)$.
 \For{$k = 0, \ldots, K-1$}
 \State Precompute $\frac{n}{2}\Sigma^{-1}_k + \gamma \left(\Sigma_k + \hat{\Sigma}_k\right)^{-1}$ 
 \For{$\ell = 0, \ldots, L-1$}
    \State $\Sigma_{\ell+1} \leftarrow  \Sigma_\ell - \eta_\ell \left(\Sigma_{\ell}^{-1}\left( X D_\ell X^\top \right)\Sigma_\ell^{-1} - \frac{\gamma}{2}\Sigma_\ell^{-1} + \frac{n}{2}\Sigma_k^{-1} + \gamma \left(\Sigma_k+\hat{\Sigma_k}\right)^{-1}
    \right)$
    \State Update $\Sigma_{k+1} \leftarrow \Sigma_{L}$.
    \EndFor
    \EndFor
\State \textbf{Output:} $\Sigma_K$ 
\end{algorithmic}
 \caption{CCCP on Optimistic Kotz Likelihood} \label{alg:CCCP_on_Kotz}
\end{algorithm}

\subsubsection{Multivariate T-Distribution}

In this section, we illustrate how one can apply Algorithm~\ref{alg:CCCP_on_Kotz} to a specific Kotz-type distribution. We focus on the $d$-dimensional mean zero multivariate $t$-distribution with $\nu >0$ degrees of freedom, parameterized by the scatter matrix $\Sigma \in \pd$. We denote this distribution as $\operatorname{MVT}(\Sigma; \nu)$. It has the density 
\[
f_X(x;\nu) = \frac{\Gamma[(\nu+d) / 2]}{\Gamma(\nu / 2) \nu^{d / 2} \pi^{d / 2}|{\Sigma}|^{1 / 2}}\left[1+\frac{1}{\nu}({x}-{\mu})^T {\Sigma}^{-1}({x}-{\mu})\right]^{-(\nu+d) / 2}.
\]
Its negative log-likelihood is proportional to $T_\nu(\Sigma)$ given by \\
\begin{align}
    T_\nu(\Sigma) &= \frac{n}{2}\log \det \Sigma + \frac{\nu + d}{2}\sum_{i=1}^n \log \Big( 1 + \frac{1}{\nu}x_i^\top \Sigma^{-1}x_i\Big) \\
    &\quad + \gamma \log \det \Big(\frac{\Sigma + \hat{\Sigma}}{2}\Big) - \frac{\gamma }{2}\log \det \big(\Sigma \hat{\Sigma}\big) \; . \nonumber
\end{align}
The multivariate $t$-distribution is a generalization of several well-known distributions: We recover the Student's $t$-distribution  by taking $d=1$ and $\Sigma = 1$. Taking $\nu \to +\infty$ recovers the multivariate normal density with mean 0 and covariance $\Sigma$. With $\nu = 1$ we recover the $d$-variate Cauchy distribution. Taking $\frac{\nu + d}{2}$ to be an integer recovers the $d$-variate Pearson type VII distribution \citep{Kotz2004}. 

Due to its ability to capture tail events (i.e., data involving errors with heavier than Gaussian tails), the multivariate t-distribution is often used for robust statistical modelling, especially when the assumptions of normality is violated~\citep{Lange1989,Nadarajah2005}. 

The corresponding optimistic likelihood problem with side  information $\hat{\Sigma}\in \pd$ can be written as
\begin{equation}\label{eq:multivariate_t_aux}
    \Sigma^* = \argmin_{\Sigma \in \pd} \left\{ \hat{\phi}(\Sigma) \defas  T_\nu(\Sigma) + \gamma \delta_S^2 \left(\Sigma, \hat{\Sigma}\right) \right\} \; ,
\end{equation}
where $\gamma > 0$ is a hyperparameter that denotes our confidence in $\hat{\Sigma}$. 
\begin{algorithm}[H]
\small 
\begin{algorithmic}[1]
    \State \textbf{Input:} $\Sigma_0, \hat{\Sigma} \in \pd$, $K, L \in \nat$, $\gamma, \nu > 0$ and $\{\eta_\ell\} \subseteq \reals_{++}$
 \For{$k = 0, \ldots, K-1$}
 \State Precompute $\gamma \left(\Sigma_k + \hat{\Sigma}_k\right)^{-1} + \frac{n}{2}\Sigma_k^{-1}$.
 \For{$\ell = 0, \ldots, L-1$}
    \State $\Sigma_{\ell+1} \leftarrow  \Sigma_\ell - \eta_\ell \Big(- \frac{\nu + d}{2} \Sigma_\ell^{-1} \big(\sum_{i=1}^n \frac{x_i x_i^\top}{\nu + x_i^\top \Sigma_\ell^{-1} x_i} \big)\Sigma_\ell^{-1}  - \frac{\gamma}{2}\Sigma_\ell^{-1} +  \gamma \big(\Sigma_k + \hat{\Sigma}_k\big)^{-1} + \frac{n}{2}\Sigma^{-1}_k\Big)$
    \State Update $\Sigma_{k+1} \leftarrow \Sigma_{L}$.
    \EndFor
\EndFor
\State \textbf{Output:} $\Sigma_K$
\end{algorithmic}
\caption{CCCP on Optimistic Multivariate T-Likelihood}\label{alg:aux_multivar_T}
\end{algorithm}
We solve problem \eqref{eq:multivariate_t_aux} using again a CCCP approach. To adapt Algorithm~\ref{alg:CCCP_on_Kotz} to this new setting, we can group the convex and concave terms of $\hat{\phi}(\Sigma)$ into the functions $f(\Sigma)$ and $g(\Sigma)$, respectively:
\[
\begin{aligned}
    f(\Sigma) &= \frac{\nu + d}{2}\sum_{i=1}^n \log \left(1 + \frac{1}{\nu}x_i^\top \Sigma^{-1}x_i\right) - \frac{\gamma}{2}\log \det \left(\Sigma \hat{\Sigma}\right) \\
    g(\Sigma) &= \frac{n}{2}\log \det \Sigma + \gamma \log \det \left( \frac{\Sigma + \hat{\Sigma}}{2} \right)
\end{aligned}
\]
Their gradients are computed as 
\[
\begin{aligned}
&\nabla f(\Sigma) = - \frac{\nu + d}{2} \Sigma^{-1} \left(\sum_{i=1}^n \frac{x_i x_i^\top}{\nu + x_i^\top \Sigma^{-1} x_i} \right)\Sigma^{-1}  - \frac{\gamma}{2}\Sigma^{-1}
\\&\nabla g(\Sigma) = \gamma \left(\Sigma + \hat{\Sigma}\right)^{-1} + \frac{n}{2}\Sigma^{-1}
\end{aligned}
\]
The convex CCCP surrogate function can be written as $$Q(\Sigma, \Sigma_k) = f(\Sigma) + \langle g(\Sigma_k), \Sigma - \Sigma_k \rangle$$ for convex $f$ and concave $g$. Thus to solve for $\argmin_{\Sigma \in \pd} Q(\Sigma, \Sigma_k)$ we can apply the gradient descent steps 
\[
\Sigma_{\ell+1} \leftarrow \Sigma_{\ell} - \eta_\ell \left( \nabla f(\Sigma_\ell) + \nabla g\left(\Sigma_k\right)\right) \; ,
\]
for a sequence of step-sizes $\{\eta_\ell\}$. The resulting approach is shown schematically in Algorithm~\ref{alg:aux_multivar_T}.

\subsubsection{Complexity Analysis}
We analyze the complexity of the discussed CCCP approaches on the example of Algorithm~\ref{alg:aux_multivar_T}. Each outer loop requires computing the term $\gamma \left(\Sigma_k + \hat{\Sigma}_k\right)^{-1} + \frac{n}{2}\Sigma_k^{-1}$ once. This requires two matrix inversions and one matrix addition. Each inner loop requires computing $\nabla f(\Sigma_\ell) + \nabla g(\Sigma_k)$, which can be computed as follows. First, construct a matrix $X = \left( x_1, \ldots, x_n \right) \in \reals^{d \times n}$. Second, compute $\Sigma_\ell^{-1}.$ Third, compute the vector $(\nu + x_i^\top \Sigma^{-1}x_i : i \in [n])$ by first computing the quantity 
\[
X \odot \left( \Sigma_\ell^{-1} X \right)
\]
and summing along its last dimension. Call the resulting vector $y$. This requires one matrix multiplication between $d \times d$ and $d \times n$ matrices, one Hadamard product, and $n^2$ scalar additions. For simplicity we can construct the diagonal matrix $D_\ell$ from the following vector
\[
D_\ell \defas \diag\left(\nu + x_i^\top \Sigma^{-1}x_i : i \in [n]\right) = \diag \left(\left(\nu \mathbf{1} + y\right)^{\odot -1} \right) \; ,
\]
where $\odot -1$ denotes element-wise inversion. Finally we obtain 
\[
\sum_{i=1}^n \frac{x_i x_i^\top}{\nu + x_i^\top \Sigma_\ell^{-1} x_i}  = X D_\ell X^\top \; ,
\]
which requires one matrix multiplication between two $n \times d$ matrices.
Then the gradient step requires two additional $d \times d$ matrix products and three matrix-matrix additions. Hence, each inner loop requires at most one matrix inversion and four matrix-matrix multiplications.

\subsubsection{Log Elliptically Contoured Distributions.}
A \textit{log elliptically contoured distributions} is a probability distribution whose logarithm lies in the class of elliptically contoured distributions. It turns out some log elliptically contoured distributions also satisfy the g-convex and DC structure. One example is the mean zero multivariate log-normal distribution $\text{LogMVN}(0, \Sigma).$ Given i.i.d observations $x_1, \ldots, x_n \in \reals^d_{++}$ from $\text{LogMVN}(0, \Sigma)$ its negative log-likelihood $\psi(\Sigma)$ is proportional to 
\begin{equation}\label{eq:log_norm_density}
\psi(\Sigma) \propto \frac{n}{2}\log \det \Sigma + \frac{1}{2}\sum_{j=1}^m \log (x_i)^\top \Sigma^{-1} \log (x_i)
\end{equation}
where we define the operation $\log x_i \in \reals^d$ to denote elementwise logarithm.

Observe that the g-convex and DC structure of \eqref{eq:log_norm_density} follows from the fact that $f(\Sigma) = \log \det \Sigma$ is Euclidean concave and g-linear and that the matrix fractional function $g_i(\Sigma) = \log (x_i)^\top \Sigma^{-1} \log (x_i)$ is Euclidean convex for each $1 \leq i \leq d.$ The g-convexity of $g_i(\Sigma)$ follows from it being a g-convex atom (see Example~\ref{example:g_cvx_atoms}(6)).

Hence one can also derive an algorithm like Algorithm~\ref{alg:CCCP_on_MLE} and Algorithm~\ref{alg:CCCP_on_Kotz} to solve the \emph{log-normal optimistic likelihood} optimization problem 
\[
\begin{aligned}
    &\argmin_{\Sigma \in \pd} \Psi(\Sigma) + \beta \delta_S^2(\Sigma, \hat{\Sigma})
    \\& \text{where} \quad \Psi(\Sigma) \defas \frac{n}{2}\log\det \Sigma + \frac{1}{2}\sum_{j=1}^m \log(x_i)^\top  \Sigma^{-1} \log (x_i)
\end{aligned}
\]
where $\beta >0$ is our regularization hyperparameter.

\subsection{Linear Regression on the $\pd$ manifold.}\label{section:linear_regression}
Our framework naturally encompasses linear regression on the manifold of positive definite matrices endowed with the affine invariant metric \citep{regression_pd}. 
Let $X \in \reals^{d \times d}$ be our data and $y \in \reals$ be our observed target. We aim to minimize the quadratic loss function $f(\hat{y}, y) = \frac{1}{2}\left(\hat{y}-y\right)^2$. For simplicity, we consider the model 
\begin{equation}
\hat{y} = f(W) = \tr(WX) \; , 
\end{equation}
wherein the least square problem becomes: 
\begin{equation}\label{eq:regression_pd}
\min_{W \in \pd} \frac{1}{2}\left(\tr(W \operatorname{Sym}(X)) - y\right)^2   
\end{equation}
with $\operatorname{Sym}(X) = \frac{X + X^\top}{2}$ (see~\cite[sec. 5]{regression_pd} for more details). The flexibility of our structured regularization framework allows us to regularize \eqref{eq:regression_pd} to incorporate side information. For instance, if we are given an estimator $\hat{W} \in \pd$ we can reformulate the problem as
\begin{equation}\label{eq:regression_aux}
    \min_{W \in \pd} \frac{1}{2}\left(\tr\left(W \operatorname{Sym}(X)\right) - y\right)^2 + \beta d_{\Phi}(W, \hat{W}) 
\end{equation}
for a specified symmetric gauge function $\Phi$ (see Section~\ref{sec:SG_Ball_Constraints}). Based on our discussions about its computational advantages and similarity to the Riemannian distance (see Section~\ref{sec:ball_constraint_sdiv}), one can replace $d_\Phi$ with the S-divergence, $\delta_S^2$, i.e., a regularizer based on symmetric gauge functions (see Example~\ref{ex:sdiv}).

Moreover, we can induce sparsity by adding a sparsity inducing regularizer $R_\Phi(W)$, obtaining
\begin{equation}\label{eq:regression_sparse}
    \min_{W \in \pd} \frac{1}{2}\left(\tr\left(W \operatorname{Sym}(X)\right) - y\right)^2 + \beta R_{\Phi}(W) \; .
\end{equation}
We note that since $d_\Phi$ and $R_\Phi$ are g-convex regularizers, the resulting optimization problems  \eqref{eq:regression_aux}  and \eqref{eq:regression_sparse} are both g-convex and hence can be solved using standard first-order Riemannian iterative methods. However, if one specifies $d_\Phi$ in \eqref{eq:regression_aux} to be the S-divergence regularizer
or replace $R_\Phi(W)$ with the diagonal loading regularizer (see Example~\ref{prop:diagonal_loading}) then the objective becomes g-convex \textit{and} DC.

One can adapt \eqref{eq:regression_aux} or \eqref{eq:regression_sparse} to the kernel learning or Mahalanobis distance learning problem (see sections 9.1 and 9.2 \citep{regression_pd}). For example, one can use \eqref{eq:regression_aux} to \emph{anchor} the solution to $\hat{W} \in \pd$ where $\hat{W}$ is an estimated Mahalanobis matrix from an auxiliary dataset that is representative of the data of interest. Alternatively, one can use \eqref{eq:regression_sparse} to encourage convergence to low-rank Mahalanbois matrices.

\section{Experiments}

\subsection{Square Root Problem}
We consider the problem of computing the square root of a matrix $A \in \pd$. 
 In this section, we demonstrate the competitive performance of CCCP against standard first-order Riemannian approaches for this problem. Recall that in this case, the CCCP oracle can be solved in closed form, rendering CCCP into a simple fixed point approach (see Eq.~\ref{eq:sqrt_fp}).

\paragraph{Data Generation}
We generate both medium-conditioned and ill-conditioned data. In the \textit{medium-conditioned} case, we construct 
\[
A = G G^\top \qquad \text{where} \qquad G_{ij} \stackrel{\text{i.i.d.}}{\sim} N(0,1).
\]
We further consider the Hilbert matrix
\[
H_{ij} = \frac{1}{i+j-1} \; ,
\]
a notable example of a very ill-conditioned matrix.

 \paragraph{Results}
 We showcase the performance of computing the square roots $A^{\frac{1}{2}}$ and $H^{\frac{1}{2}}$ with CCCP and benchmark against two Riemannian Conjugate Gradient (RCG) and Riemannian Gradient Descent (RGD).

Figures~\ref{fig: square_root_exp_medium} and \ref{fig: square_root_exp_ill} give results for all three algorithms in both cases. The stepsizes for RGD and RCG are determined by backtracking line search. In contrast, the fixed-point algorithm does not require a stepsize. As a reference point, the true optimum $X^*$ is computed using NumPy's square root function. The CCCP algorithm exhibits superior performance in terms of runtime. In the ill-conditioned case, we observe that RGD and RCG are unable to converge to the optimum. In contrast, CCCP attains the optimum even for ill-conditioned data. We believe this is due to the fact that the fixed-point algorithm computes the inverse of $X_k + A$ (medium-conditioned) rather than of $A$ (ill-conditioned), which is required by the gradient-based methods. This is illustrated in Figure~\ref{fig:cond_num_sqrt} and particularly evident, if we cross-compare the accuracy achieved by the three approaches:
\[
\begin{aligned}
    &\|H - \hat{H}_{\operatorname{CCCP}}\|_{F} = 8.9 \times 10^{-5}
    \\&\|H - \hat{H}_{\operatorname{RGD}}\|_{F} = 129.791
    \\&\|H - \hat{H}_{\operatorname{RCG}}\|_{F} = 126.335 
\end{aligned} \; .
\]
We discuss this observation in more detail in the next section.

\begin{figure}[htbp]
  \centering
  \begin{minipage}[b]{0.45\textwidth}
    \centering
    \includegraphics[width=\textwidth]{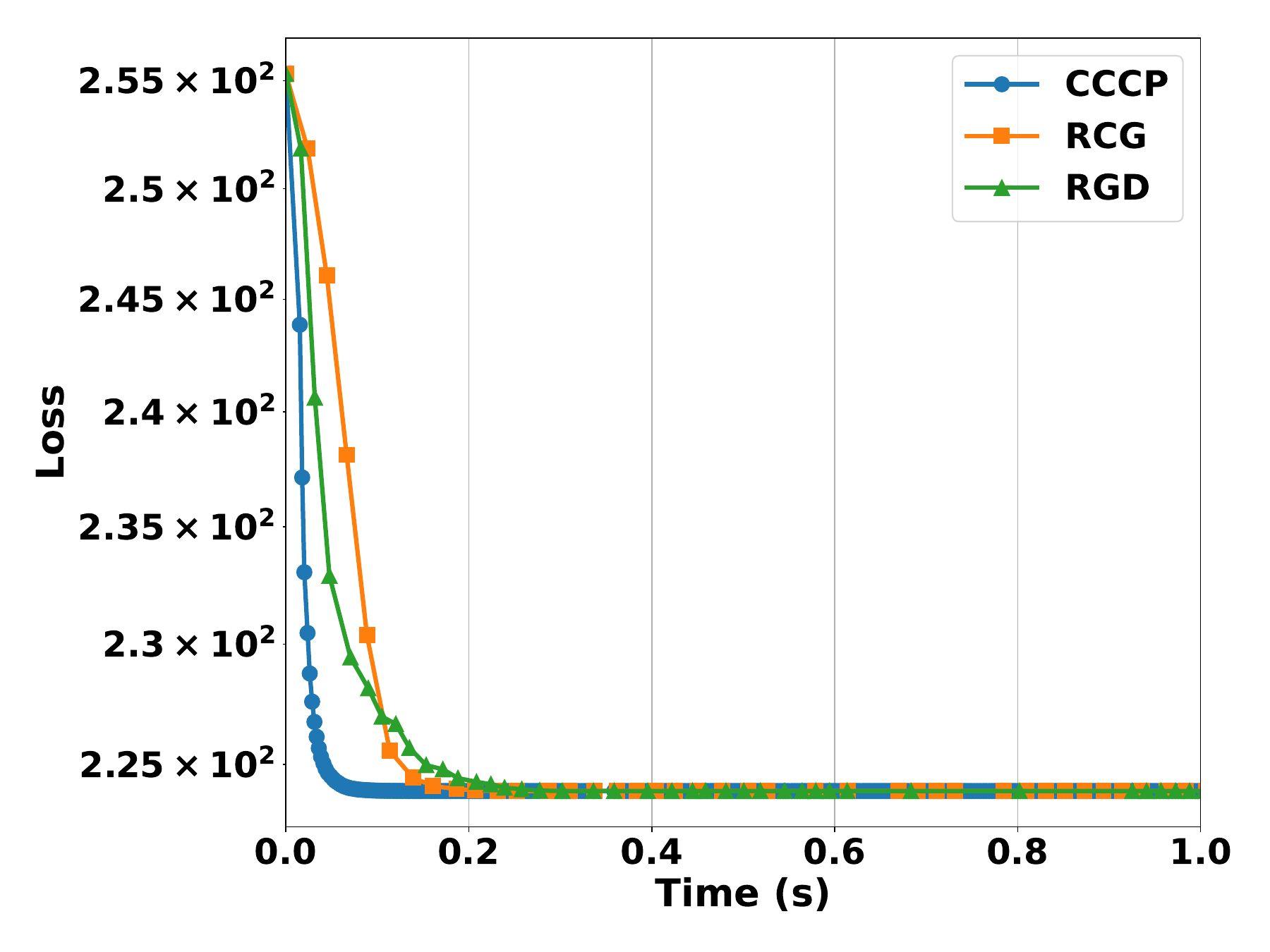}
  \end{minipage}
  \hfill
  \begin{minipage}[b]{0.45\textwidth}
    \centering
    \includegraphics[width=\textwidth]{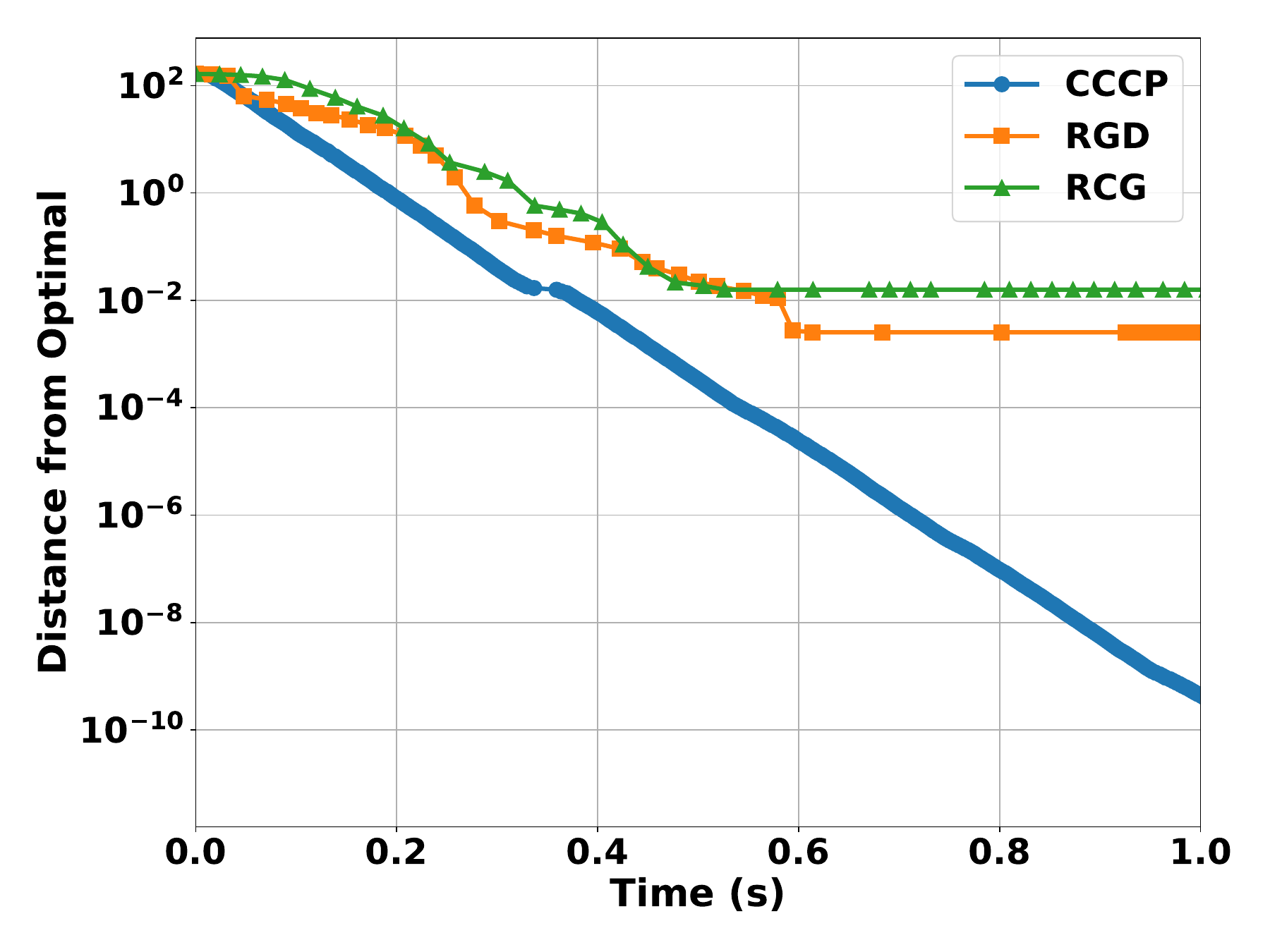}
  \end{minipage}
  \caption{We apply the fixed-point algorithm (see Proposition~\ref{prop:fp_sqrt}) to the medium conditioned $H \in \reals^{200 \times 200}$. We initialized all algorithms at $X_0 = 3 I_d$. Although all three methods are of the same order in terms of per-iteration-complexity, the fixed-point method exhibits superior runtime performance. The stepsizes for RGD and RCG are chosen using backtracking line search. In contrast, the fixed-point algorithm does not need a stepsize. Distance is measured in terms of the Frobenius norm.}
    \label{fig: square_root_exp_medium}
\end{figure}

\begin{figure}[htbp]
  \centering
  \begin{minipage}[b]{0.45\textwidth}
    \centering
    \includegraphics[width=\textwidth]{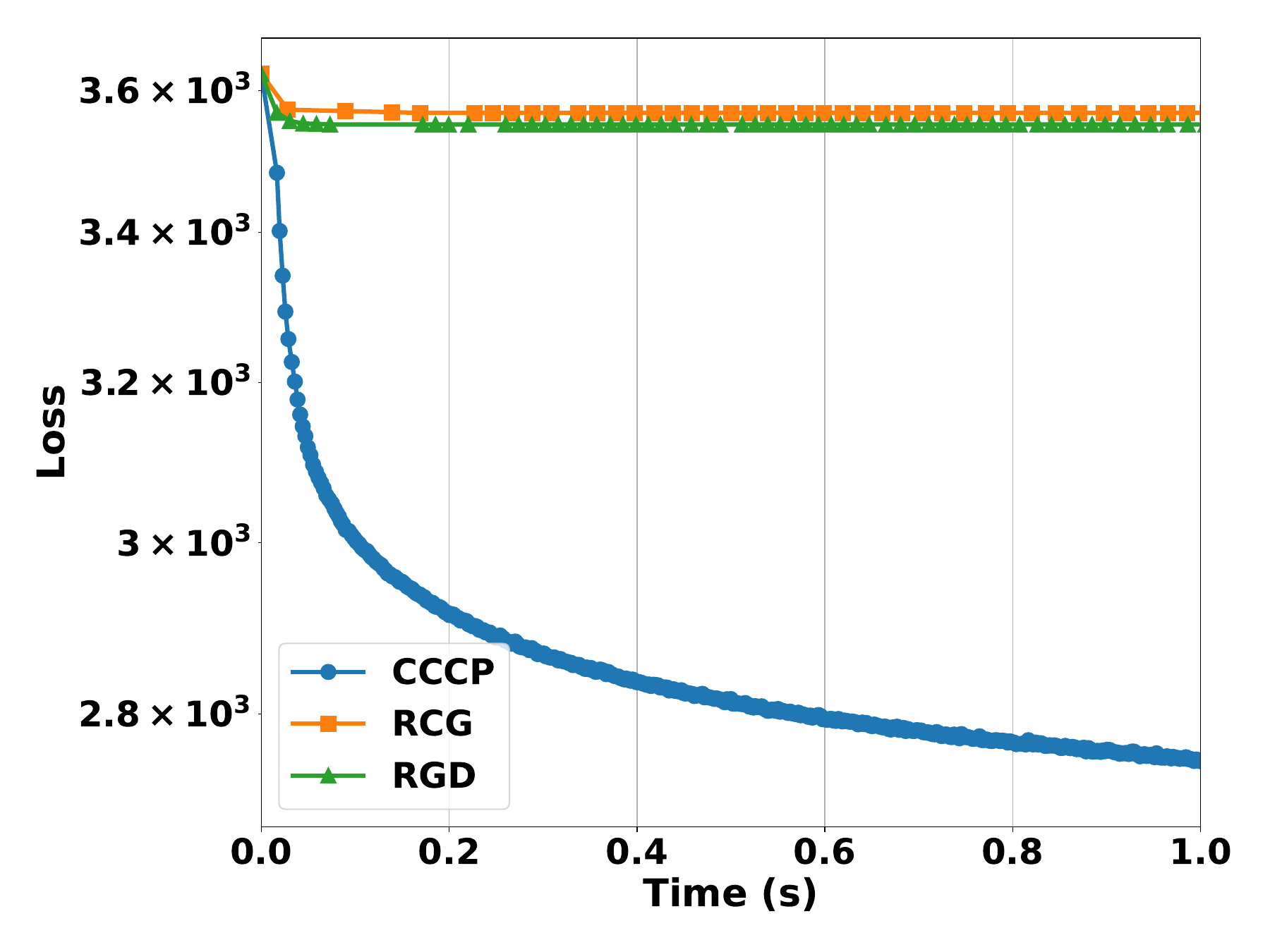}
  \end{minipage}
  \hfill
  \begin{minipage}[b]{0.45\textwidth}
    \centering
    \includegraphics[width=\textwidth]{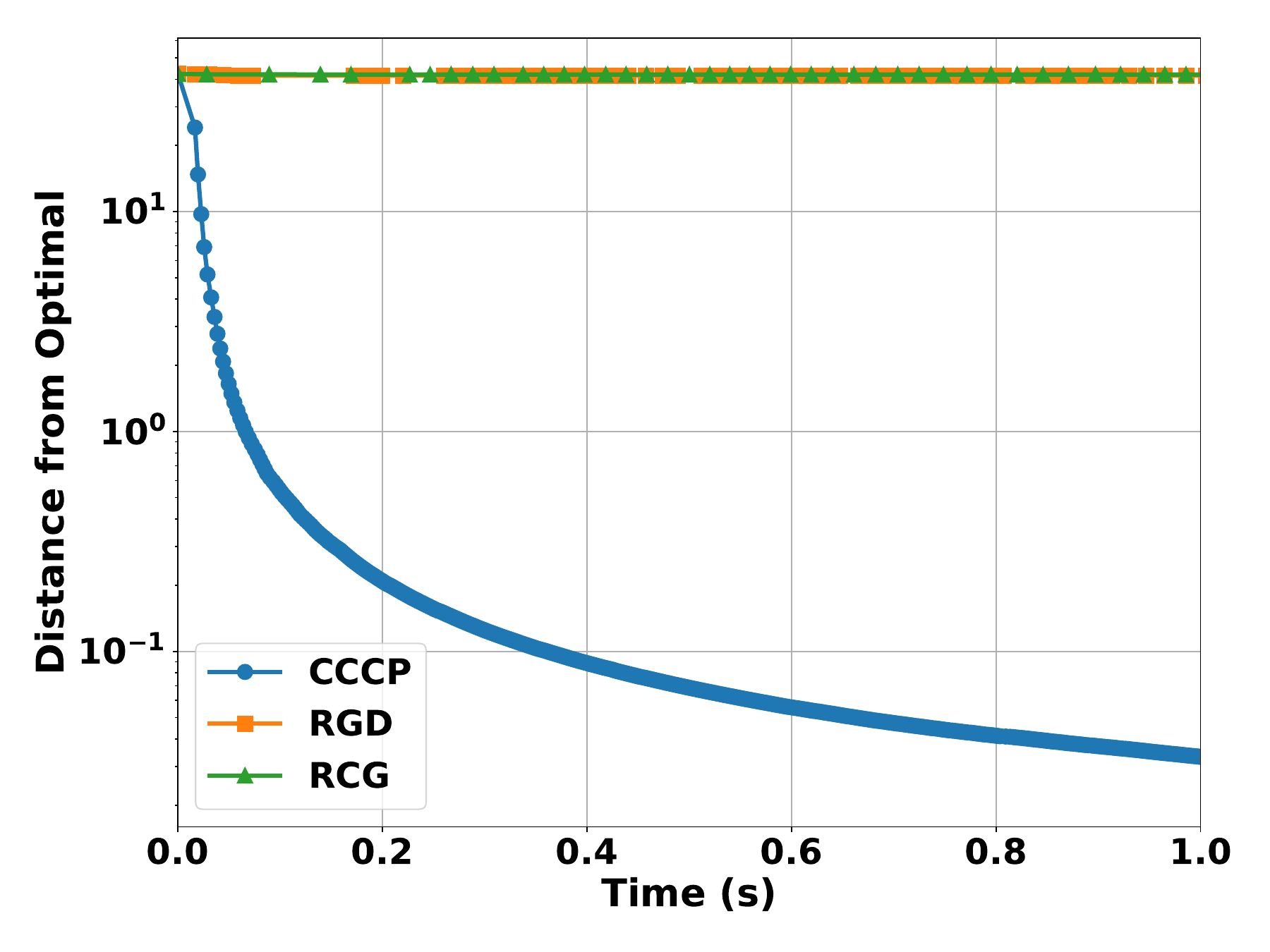}
  \end{minipage}
  \caption{We apply the fixed-point algorithm (see Proposition~\ref{prop:fp_sqrt}) to the ill-conditioned Hilbert matrix where we took dimension $d=200$. We initialized all algorithms at $X_0 = 3 I_d$. The benchmarks fail to converge whereas CCCP exhibits robustness to ill conditioning.}
  \label{fig: square_root_exp_ill}
\end{figure}

\paragraph{Gradient Steps}
The poor convergence of the Riemannian first-order methods holds across different very ill-conditioned matrices beyond the Hilbert matrix. For example, one can take the ill-conditioned linear low $k$-rank projections of $G G^\top$ with small perturbation $\delta I$ for $\delta \approx 0$. The first-order methods fail to converge for this ill-conditioned matrix as well.
In the following, we discuss this observation and a possible explanation on the example of the Hilbert matrix.
Recall that RGD preforms updates (for some stepsize $\eta > 0$)
\[
X \leftarrow \operatorname{Exp}(-\eta \operatorname{grad}\phi(X)) \; ,
\]
where the exponential map is defined as
\[
\operatorname{Exp}_X(t V)=X^{1 / 2} \exp \left(t X^{-1 / 2} V X^{-1 / 2}\right) X^{1 / 2}.
\]
Inserting this and the Riemannian gradient $\operatorname{grad}\phi(X)$ (see sec.~\ref{sec:background}) above gives
\[
\operatorname{Exp}(-\eta \operatorname{grad}\phi(X)) = X^{1/2} \exp \left(-\eta  X^{1/2} \nabla_X \Bar{\phi}(X) X^{1/2} \right)X^{1/2} 
\]
with
\[
\nabla \bar{\phi}(X) = \frac{1}{2}\left(\frac{X+A}{2}\right)^{-1} + \frac{1}{2}\left(\frac{X+I}{2}\right)^{-1} - X^{-1} \; .
\]
We suspect that computing the inverse of $X^{-1}$, i.e., inverting the Hilbert matrix at each iteration of the first-order methods results in numerical instability leading to the exhibited poor convergence behaviour. 
In contrast, the CCCP approach (Eq.~\ref{eq:sqrt_fp}) is robust to ill-conditioned matrices. Adding the identity $X+I$ improves the condition number of $X$ before taking inverses. Although $X+A$ does not have better conditioning than $A$ in general, we observe that in cases where $A$ is ill-conditioned, the matrix $X+A$ actually becomes well-conditioned in practice. Heuristically, this follows from Weyl's inequality which implies 
    \[
    \kappa(A+B) \leq \frac{\lambda_{\max}(A) + \lambda_{\max}(B)}{\lambda_{\min}(A) + \lambda_{\min}(B)} \qquad \text{for} \qquad A,B \in \pd \; ,
    \]
    where $\kappa(X)$ is the condition number of $X.$ This is demonstrated in Figure~\ref{fig:cond_num_sqrt}.
\begin{figure}[ht]
  \centering
  \begin{minipage}[b]{0.45\textwidth}
    \centering
    \includegraphics[width=\textwidth]{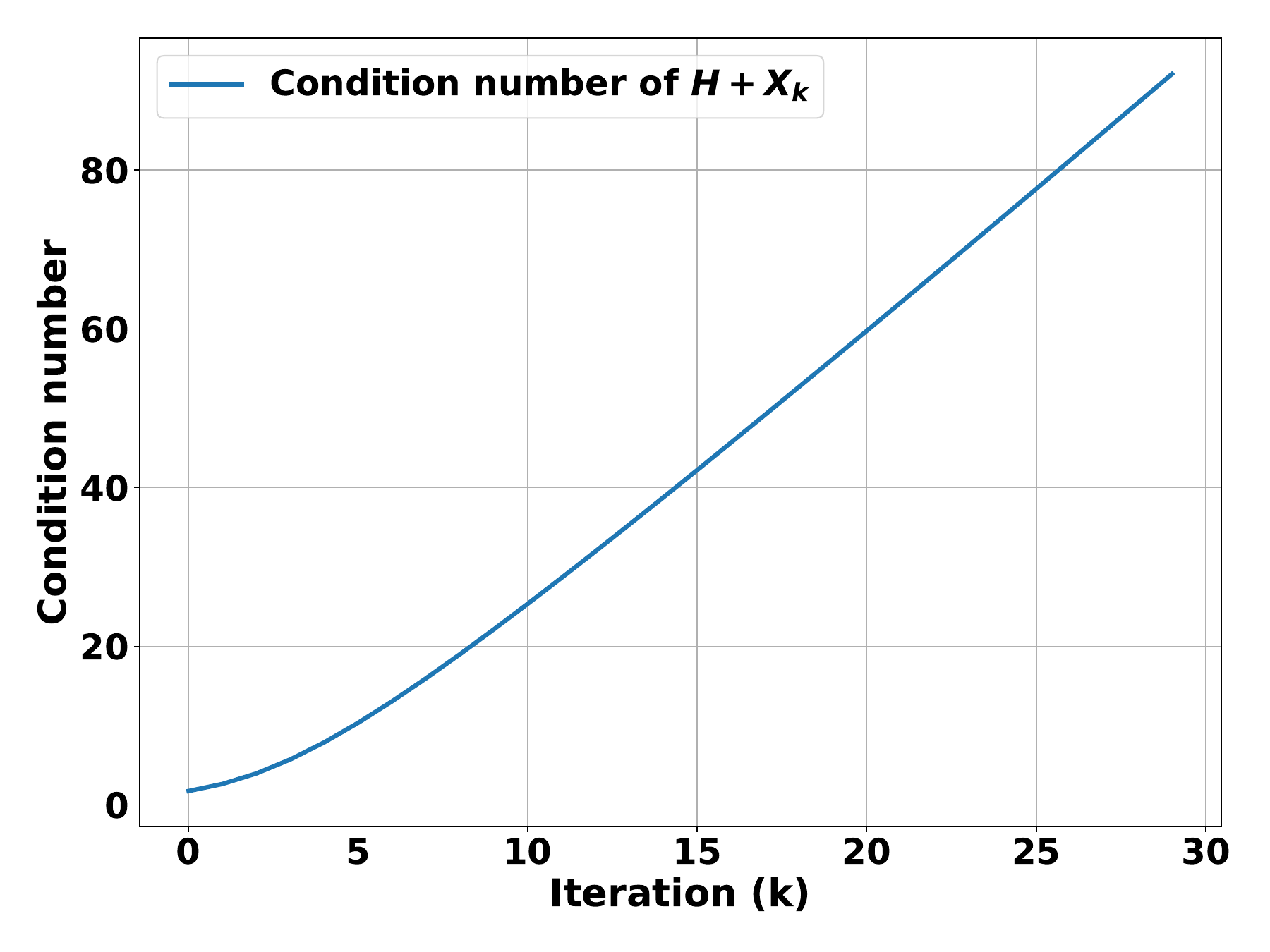}
  \end{minipage}
  \hfill
  \begin{minipage}[b]{0.45\textwidth}
    \centering
    \includegraphics[width=\textwidth]{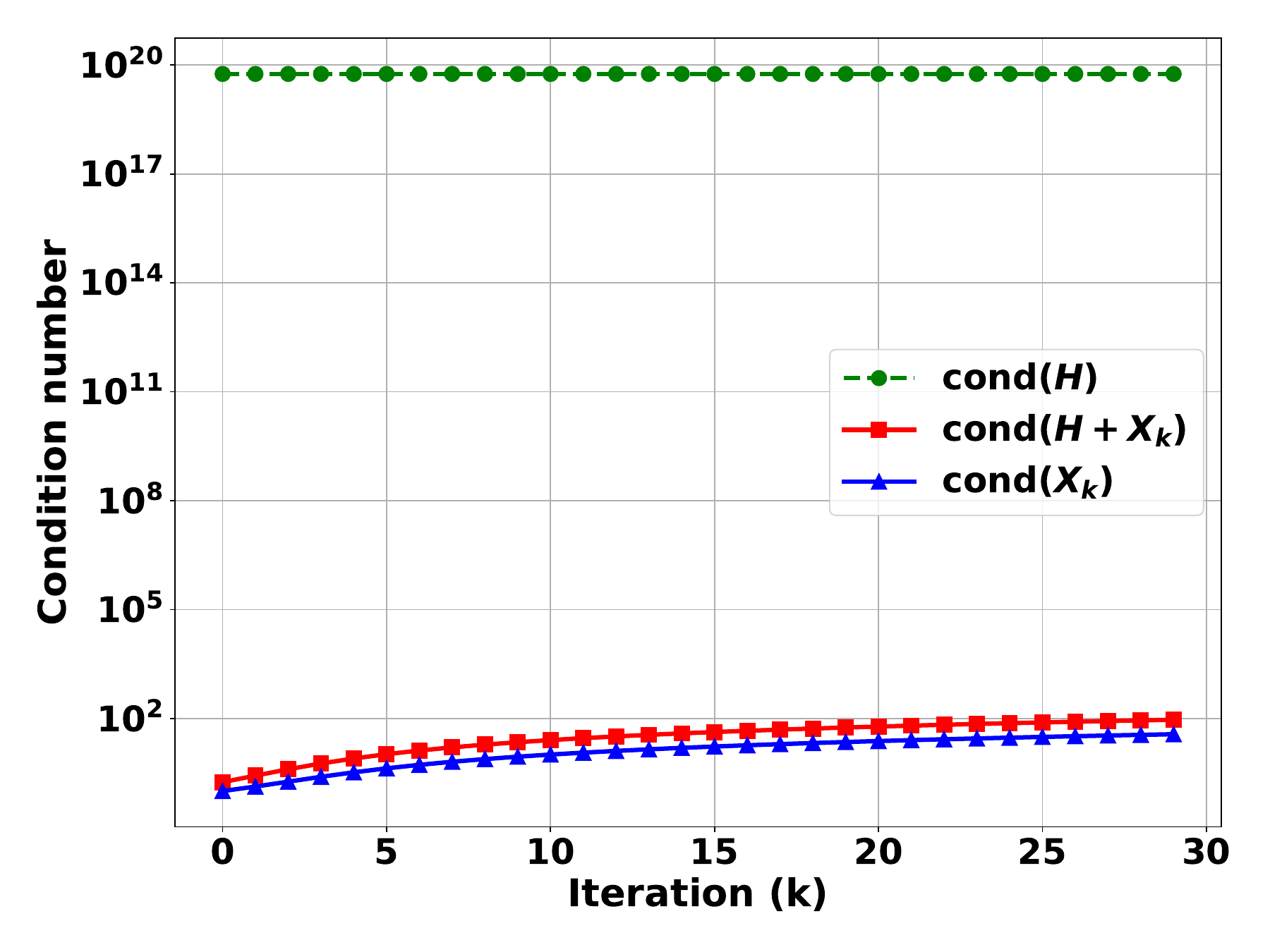}
  \end{minipage}
  \caption{We generate the Hilbert matrix $H \in \mathbb{R}^{200 \times 200}$. We plot the condition number of $H+X_k$ where $X_k$ is the $k$-th iterate of the fixed-point algorithm~\ref{eq:sqrt_fp} and compare this to the condition number of $H$. Clearly, $H+X_k$ is much better conditioned than $H$. This trend also holds for other very ill-conditioned matrices.}
  \label{fig:cond_num_sqrt}
\end{figure}

\subsection{Karcher Mean}
In this section, we compare the performance of the CCCP approach, i.e., the fixed point approach given by Eq.~\ref{eq:karcher_fp}, to RGD and RCG for the Karcher mean problem (Eq.~\ref{problem:sdiv_karcher_mean}).

\paragraph{Data generation} 
We focus on the medium-conditioned case, where we sample $G_{1}, \ldots, G_m $ random matrices, each with i.i.d standard Gaussian entries, and construct the data points $A_k \defas G_kG_k^\top$. A proxy for the true optimum is obtained by averaging the last iterate of the three algorithms upon convergence.

\paragraph{Results}
Figures~\ref{fig:karcher_mean_100_100} and~\ref{fig:karche_mean_100_500} show the convergence of all three algorithms.
We see that the CCCP algorithm exhibits superior runtime  compared to the two Riemannian first-order methods. Notably, the gap between CCCP and the two gradient-based approaches only widens as we increase the dimensionality and number of data samples.

\begin{figure}[htbp]
  \centering
  \begin{minipage}[b]{0.45\textwidth}
    \centering
    \includegraphics[width=\textwidth]{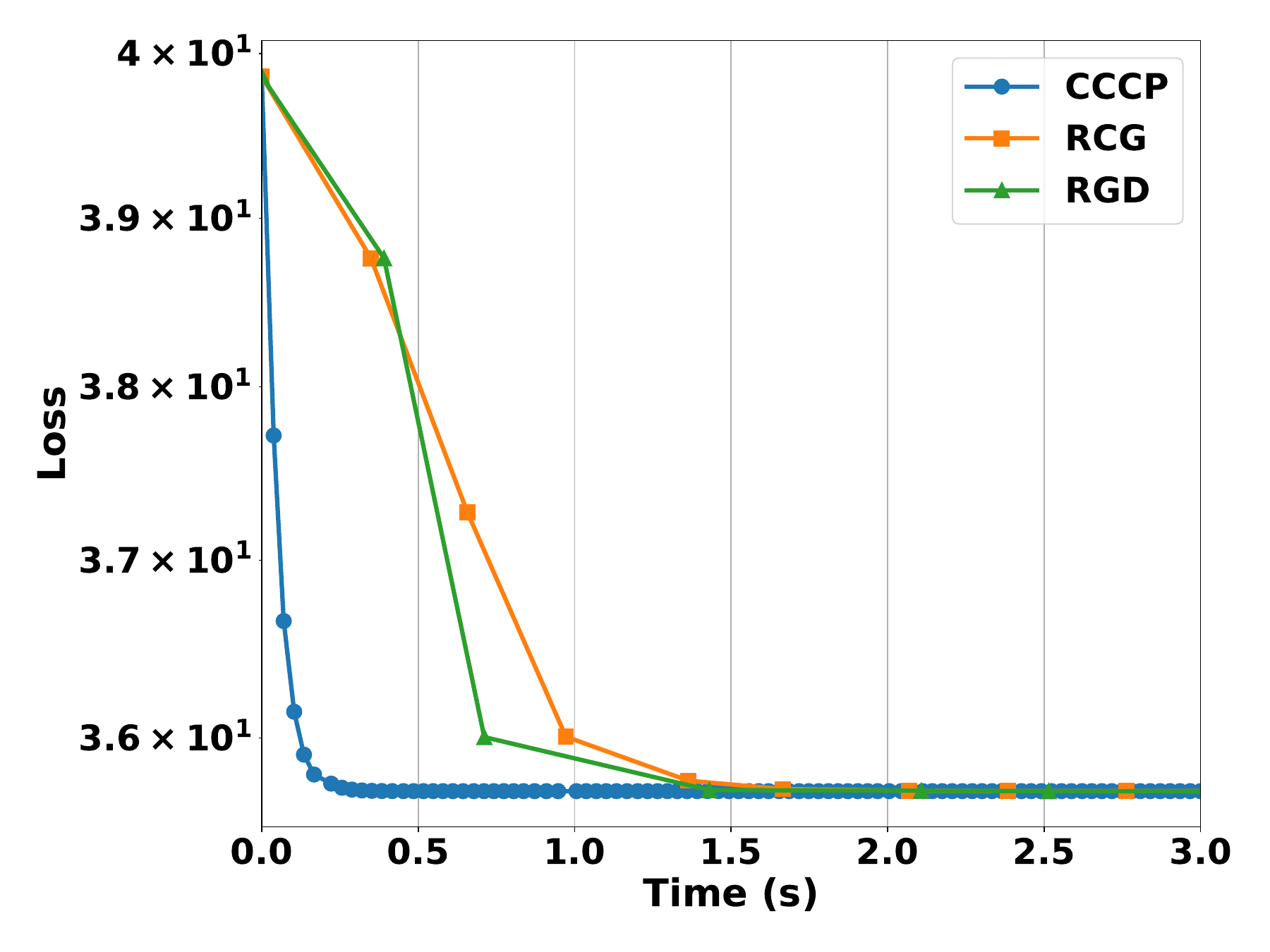}
  \end{minipage}
  \hfill
  \begin{minipage}[b]{0.45\textwidth}
    \centering
    \includegraphics[width=\textwidth]{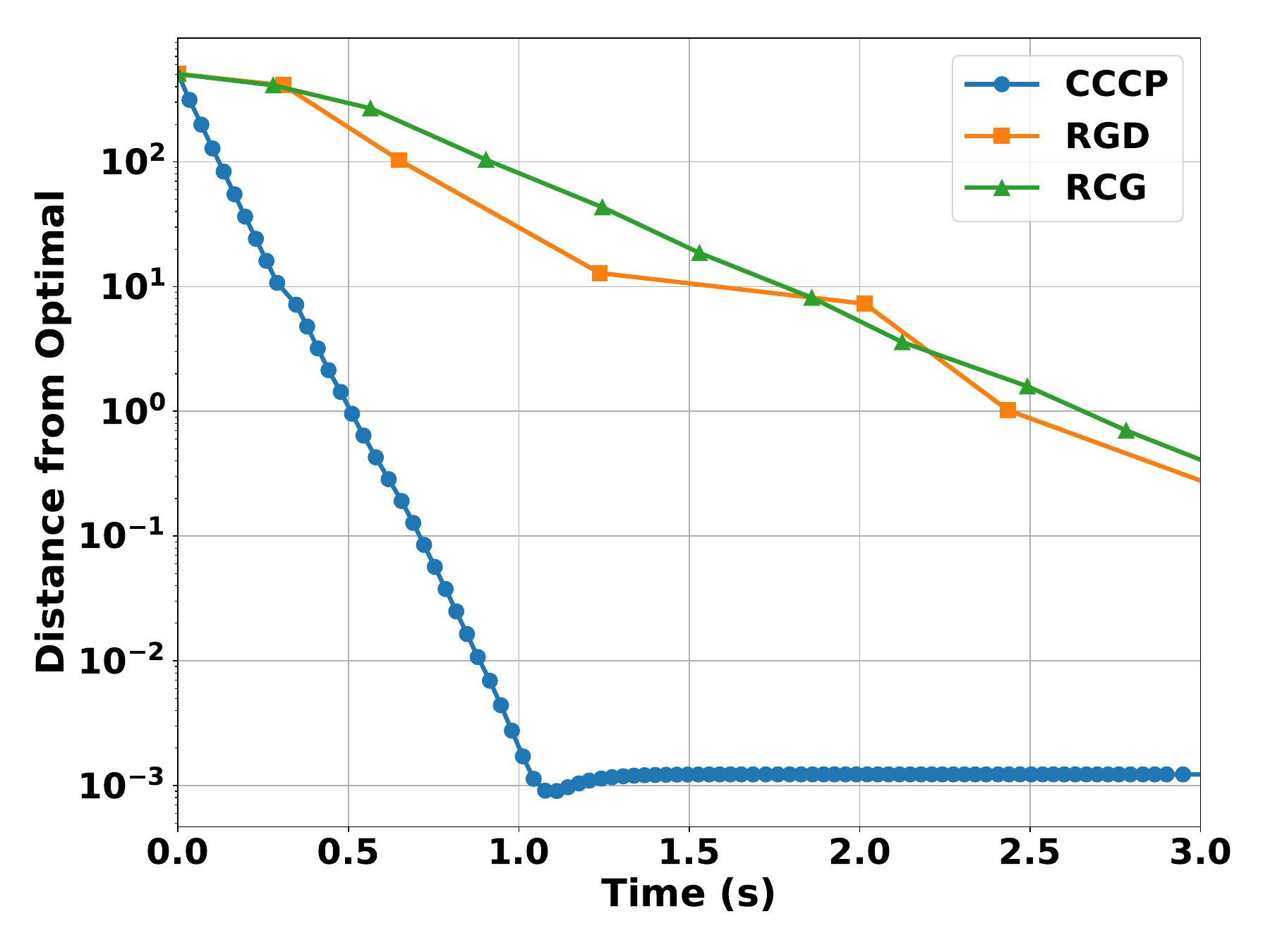}
  \end{minipage}
  \caption{\textbf{Karcher Mean.} $m=100$ and $d=100$. . CCCP demonstrates superior runtime complexity.}
  \label{fig:karcher_mean_100_100}
\end{figure}

\begin{figure}[htbp]
  \centering
  \begin{minipage}[b]{0.45\textwidth}
    \centering
    \includegraphics[width=\textwidth]{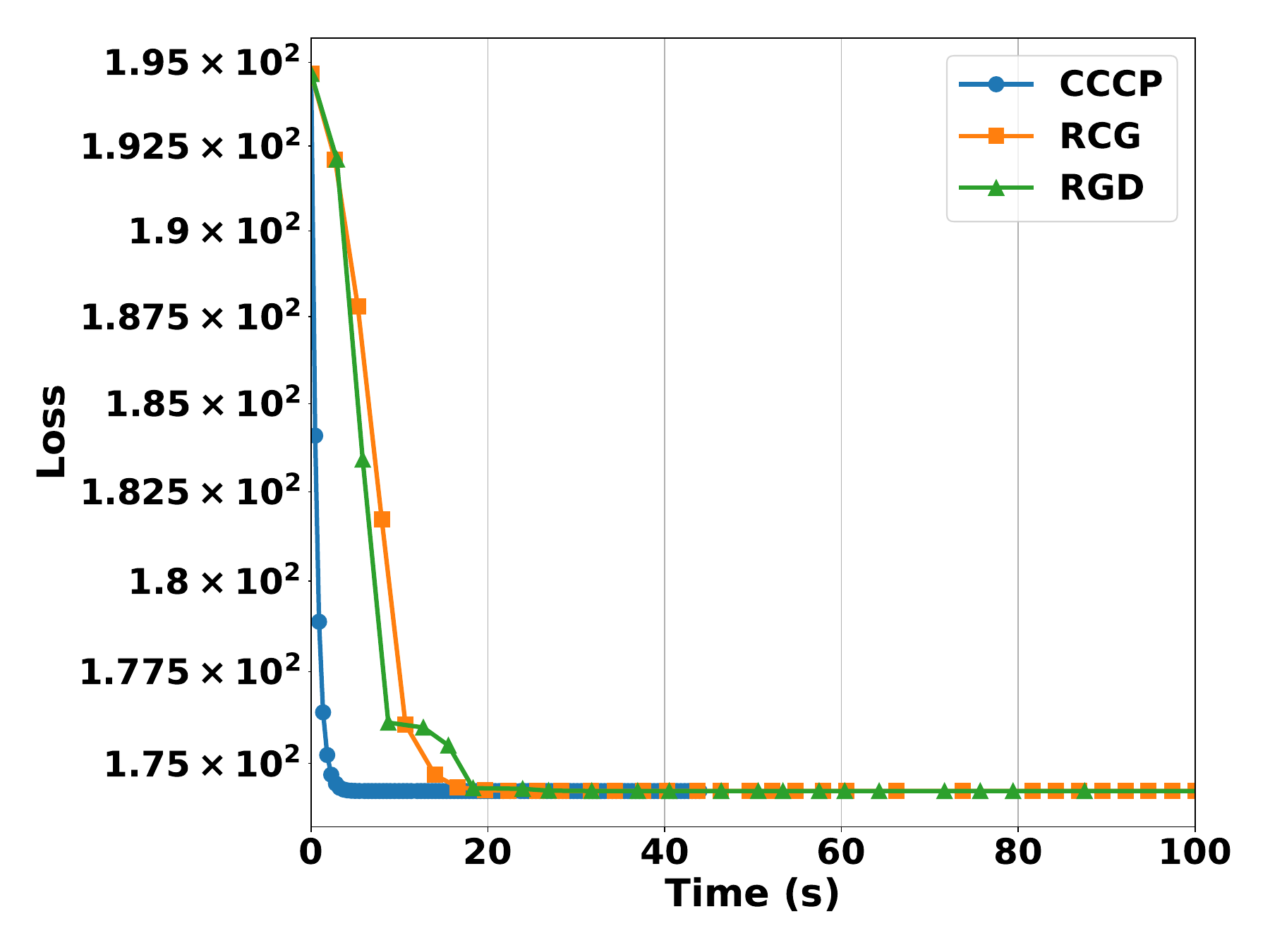}
  \end{minipage}
  \hfill
  \begin{minipage}[b]{0.45\textwidth}
    \centering
    \includegraphics[width=\textwidth]{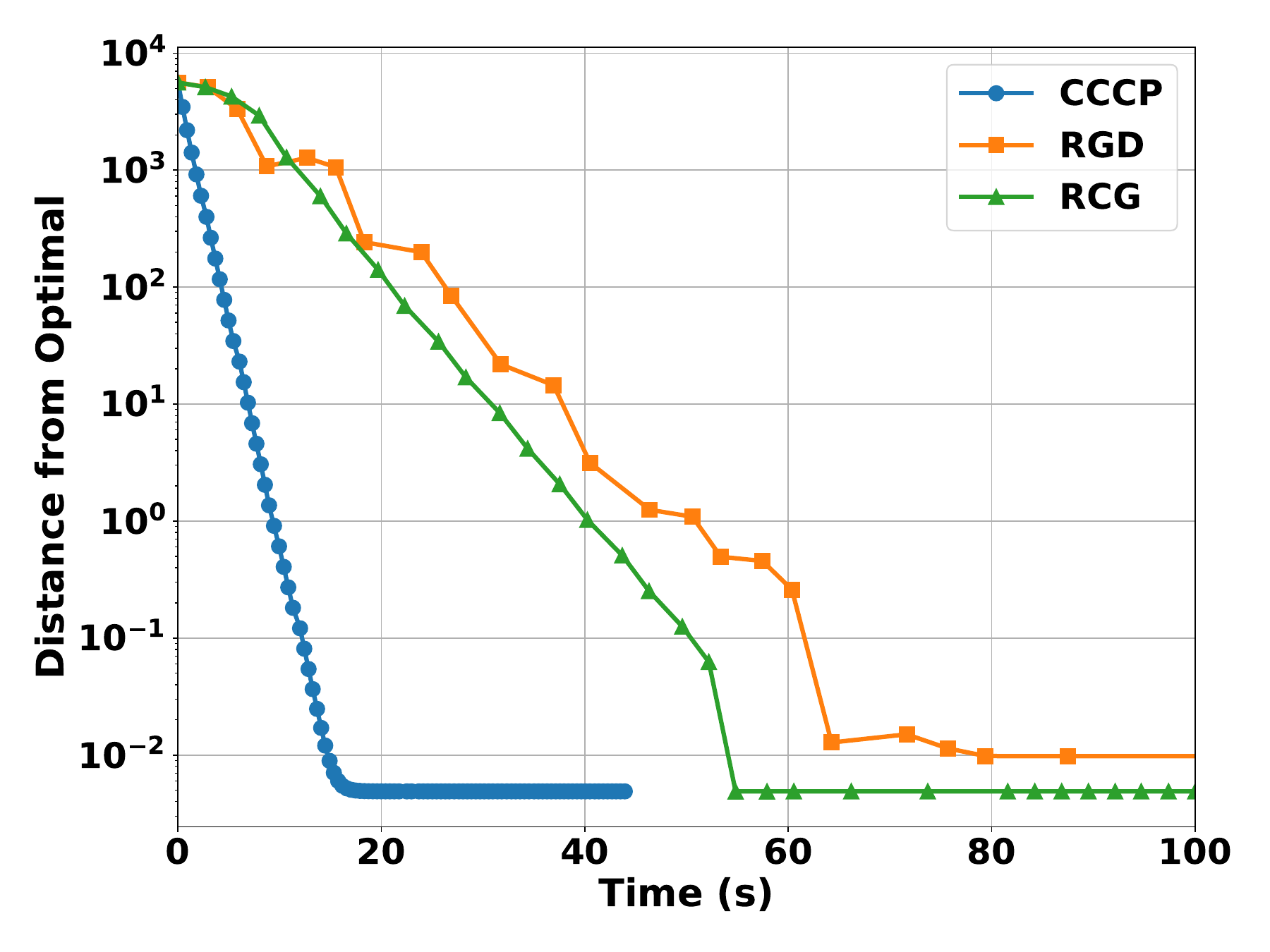}
  \end{minipage}
  \caption{\textbf{Karcher Mean.} $m=100$ and $d=500$. We observe that the gap between the runtime performance between CCCP and the benchmarks widens as we take the dimension $d$ to be larger.}
  \label{fig:karche_mean_100_500}
\end{figure}

\begin{figure}[htbp]
  \centering
  \begin{minipage}[b]{0.45\textwidth}
    \centering
    \includegraphics[width=\textwidth]{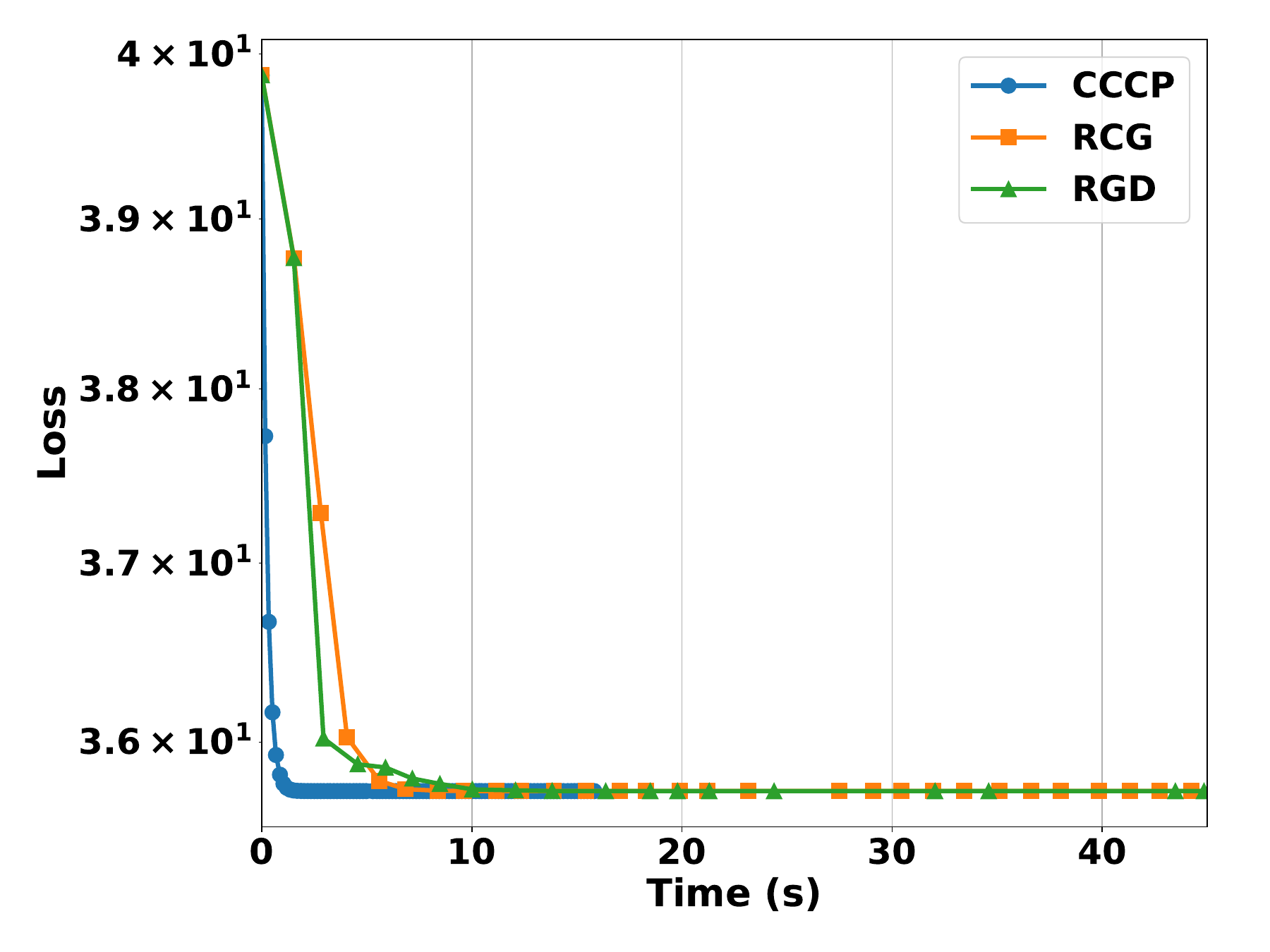}
  \end{minipage}
  \hfill
  \begin{minipage}[b]{0.45\textwidth}
    \centering
    \includegraphics[width=\textwidth]{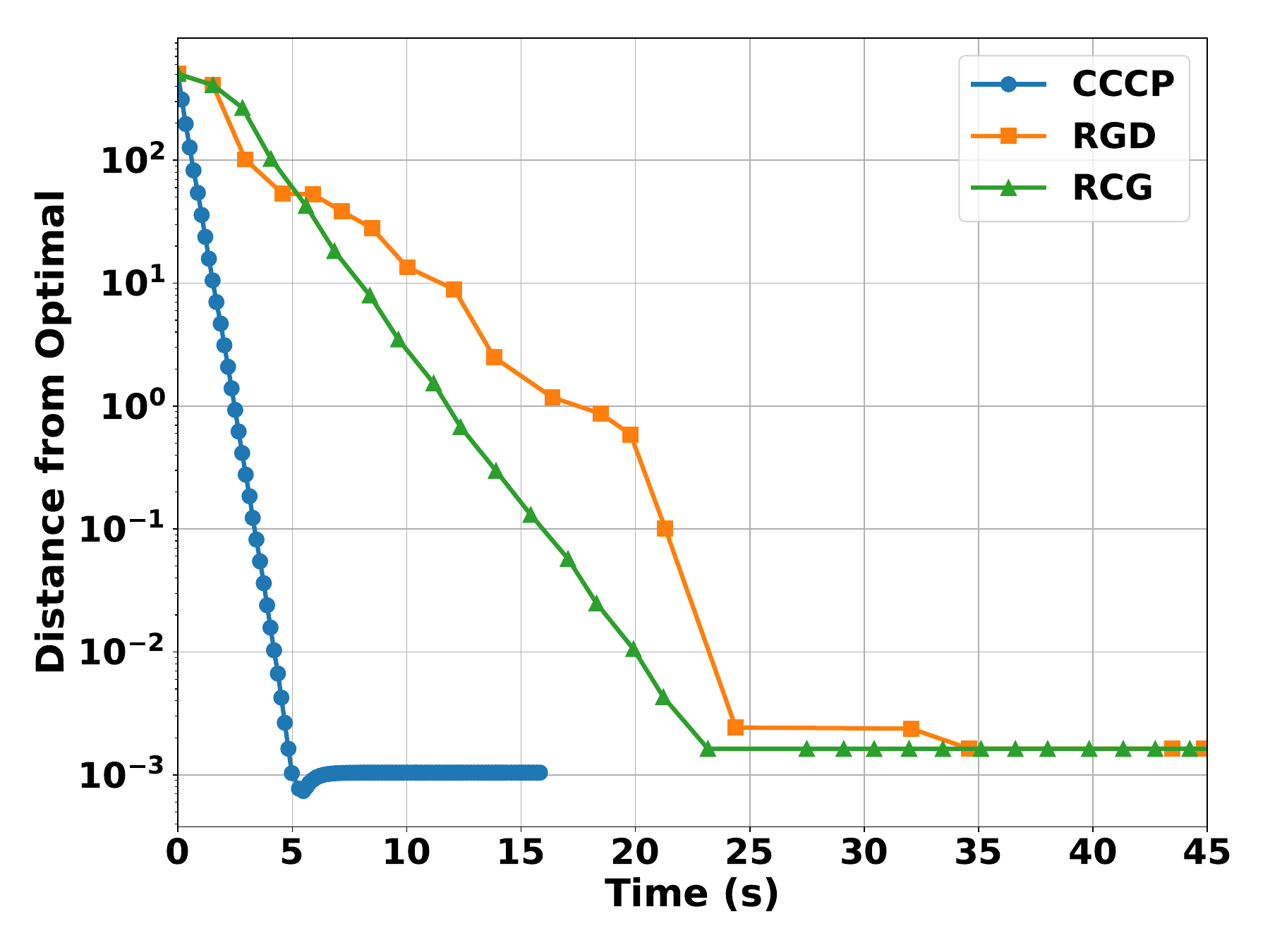}
  \end{minipage}
  \caption{\textbf{Karcher Mean.} $m=500$ and $d=100$. We observe that the gap between the runtime performance between CCCP and the benchmarks widens as we take the number of samples $m$ to be larger.}
  \label{fig:karcher_mean_500_100}
\end{figure}

\newpage
\subsection{Optimistic Gaussian Likelihood}\label{section:gaussian_aux_mle}
In this section, we test the performance of CCCP on the problem of computing the optimistic likelihood, introduced in sec.~\ref{sec:GaussianMLE_intro}:

\begin{equation}\label{exp:sdiv_ball_MLE_Problem}
\begin{aligned}
    &\argmin_{\Sigma \in \pd} \left\{ \hat{\phi}(\Sigma) \defas \tr\left(S \Sigma^{-1}\right) + \log \det \Sigma  + \beta \delta_S^2\left(\Sigma, \hat{\Sigma}\right) \right\}. 
\end{aligned}    
\end{equation}

\paragraph{Data Generation}
We follow an experimental setup similar to that of~\citep{Nguyen2019CalculatingOL}. In particular, we generate the true covariance $\Sigma$ and its estimate $\hat{\Sigma} \in \pd$ as follows. First we draw a Gaussian random matrix $A$ with i.i.d. entries $A_{ij} \sim \mathcal{N}(0,1)$. Then we symmetrize and ensure it is positive definite via $\Sigma = \frac{1}{2}\left(A + A^\top\right) + \delta I$. To construct $\hat{\Sigma}$ we conduct the eigenvalue decomposition $\Sigma = Q \Lambda Q^\top$ and replace the eigenvalues in $\Lambda$ with a random diagonal matrix $\hat{D}$ whose diagonal elements are sampled independently and uniformly from $\{1, 2, \ldots, 50\}.$

Our experiments in Figure~\ref{fig:Gaussian_aux_mle_30}
illustrates the output of Algorithm~\ref{alg:CCCP_on_MLE} and its distance from the true covariance $\Sigma$. The trend of the curves matches the intuition that increasing $\beta > 0$ and therefore increasing the confidence in $\hat{\Sigma}$ results in a solution closer to $\hat{\Sigma}$. We also note that higher regularization $\beta > 0$ results in faster convergence as exhibited in Figure~\ref{fig:Gaussian_aux_mle_30}. Moreover, Table~\ref {table:gaussian_aux_norm_master} illustrates the interpolation behaviour of $\hat{\Sigma}_\beta$ between $S$ and $\hat{\Sigma}$ as a function of $\beta$.

\begin{figure}
    \centering
    \includegraphics[width=0.43\linewidth]{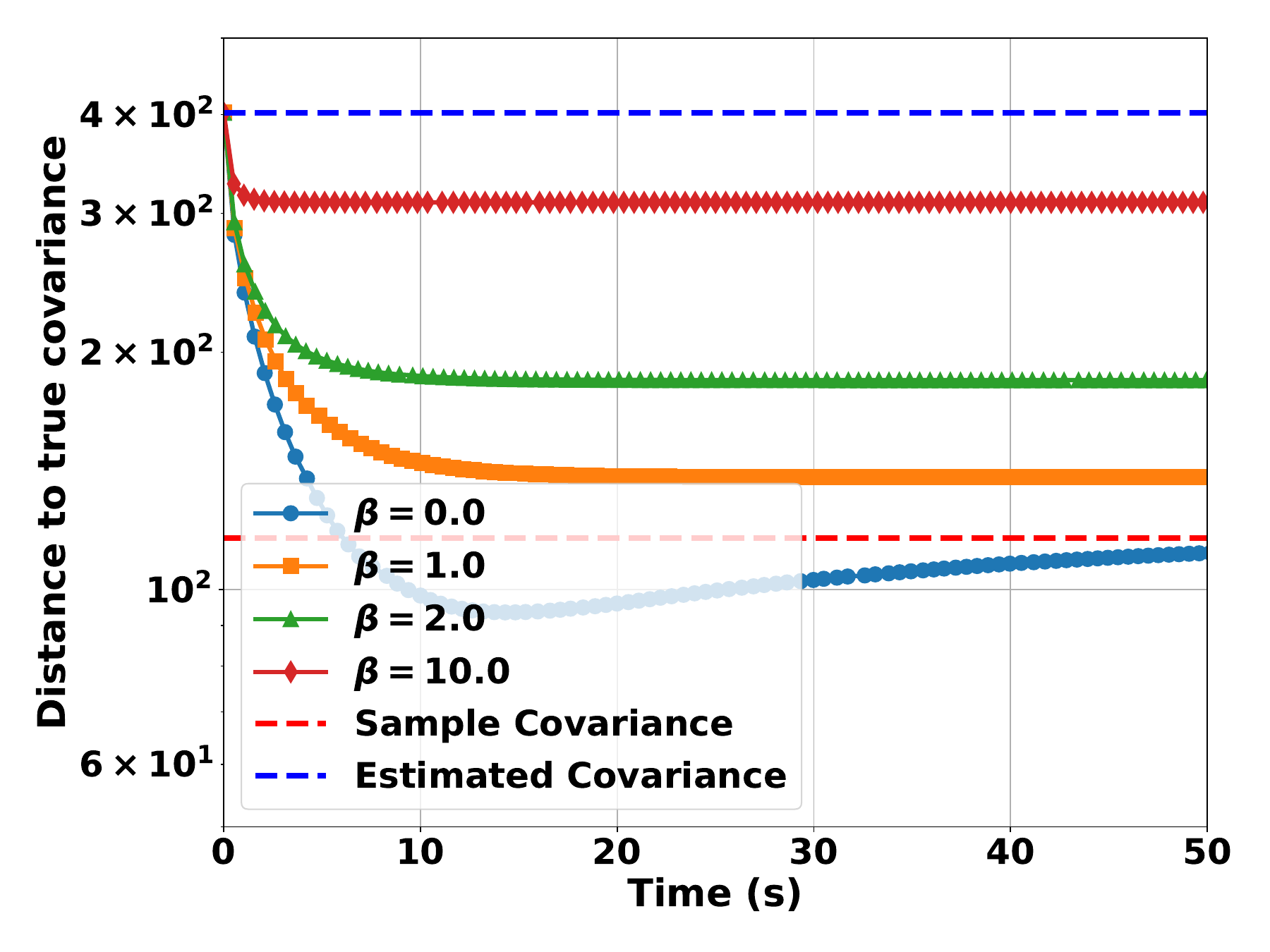}
    \includegraphics[width=0.43 \linewidth]{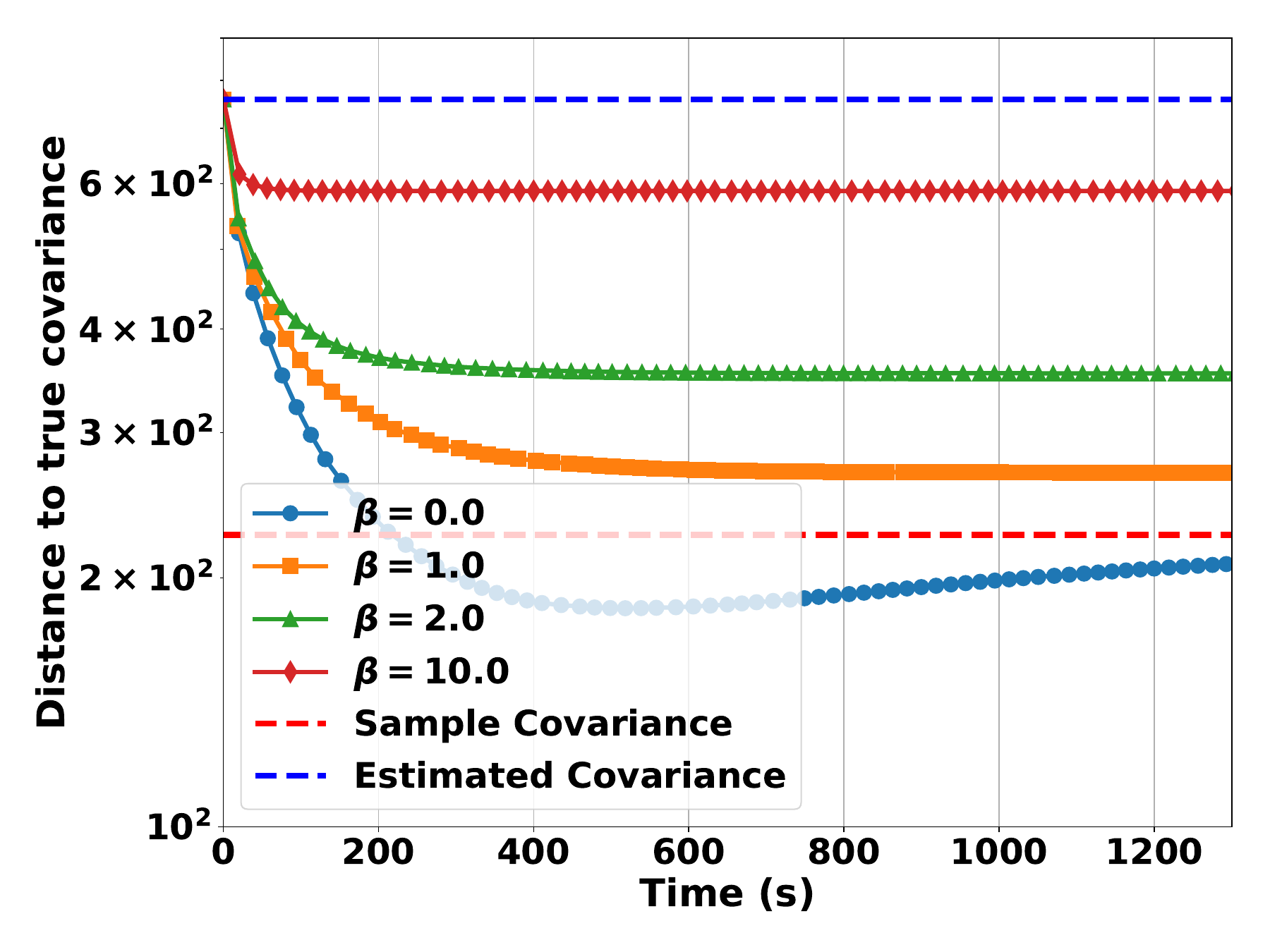}
    \caption{\textbf{Algorithm~\ref{alg:CCCP_on_MLE} on Gaussian Optimistic Likelihood.} We sampled $n=100$ independent Gaussian vectors of dimension $d=30$ for the left plot. Meanwhile, the right plot was generated with $n=1500$ and $d=100$. We initialized our iterate at our estimate $\hat{\Sigma}$.
    As we increase $\beta$, Algorithm~\ref{alg:CCCP_on_MLE} converges to a solution $\hat{\Sigma}_\beta$ closer to $\hat{\Sigma}$. At $\beta = 0$, the algorithm converges to the sample covariance, i.e., $\hat{\Sigma}_\beta = S$. Refer to table~\ref{table:gaussian_aux_norm_master} for the distances between $\|\hat{\Sigma}_\beta  - \Sigma^*\|_F$ and $\|\hat{\Sigma}_\beta - \hat{\Sigma}\|_F$ for varying $\beta$.}
    \label{fig:Gaussian_aux_mle_30}
\end{figure}

\begin{table}[htbp]
  \centering
  \begin{minipage}{0.45\textwidth}
    \centering
    \begin{tabular}{|c|c|c|}
    \hline
    $\beta$ & $\|\hat{\Sigma}_\beta - S\|_F$ & $\|\hat{\Sigma}_\beta - \hat{\Sigma}\|_F$ \\ \hline
    0  & 0.149   & 412.009 \\ \hline
    1  & 116.545 & 296.022 \\ \hline
    2  & 178.140 & 234.583 \\ \hline
    10 & 316.485 & 96.094  \\ \hline
    \end{tabular}
  \end{minipage}
  \begin{minipage}{0.45\textwidth}
    \centering
    \begin{tabular}{|c|c|c|}
    \hline
    $\beta$ & $\|\hat{\Sigma}_\beta - S\|_F$ & $\|\hat{\Sigma}_\beta - \hat{\Sigma}\|_F$ \\ \hline
    0  & 21.741  & 780.233 \\ \hline
    1  & 231.662 & 566.262 \\ \hline
    2  & 351.602 & 446.663 \\ \hline
    10 & 617.255 & 180.798 \\ \hline
    \end{tabular}
  \end{minipage}
  \caption{$\hat{\Sigma}_\beta$ denotes the output of Algorithm~\ref{alg:CCCP_on_MLE} with different $\beta$ values. The left table corresponds to the case $n=100$ and $d =30$. The right table corresponds to $n=1500$ and $d=100$.}
  \label{table:gaussian_aux_norm_master}
\end{table}

\subsection{Optimistic Multivariate T-Likelihood}
 We perform an experiment similar to that in the previous section for Algorithm~\ref{alg:aux_multivar_T}.
 We generate $d$-dimensional random vectors $x_1, \ldots, x_n \stackrel{\text{iid}}{\sim} \operatorname{MVT}(\Sigma; \nu)$ and observe the output of Algorithm~\ref{alg:aux_multivar_T} for different values of $\gamma$. We observe that incorporating $\hat{\Sigma}$ improves the distance from optimality. However, we observed that this method requires a high number of data points (See Figure~\ref{fig:multivar_t_aux_high_samples}).

\begin{figure}[ht]
    \centering
    \includegraphics[width=0.43\linewidth]{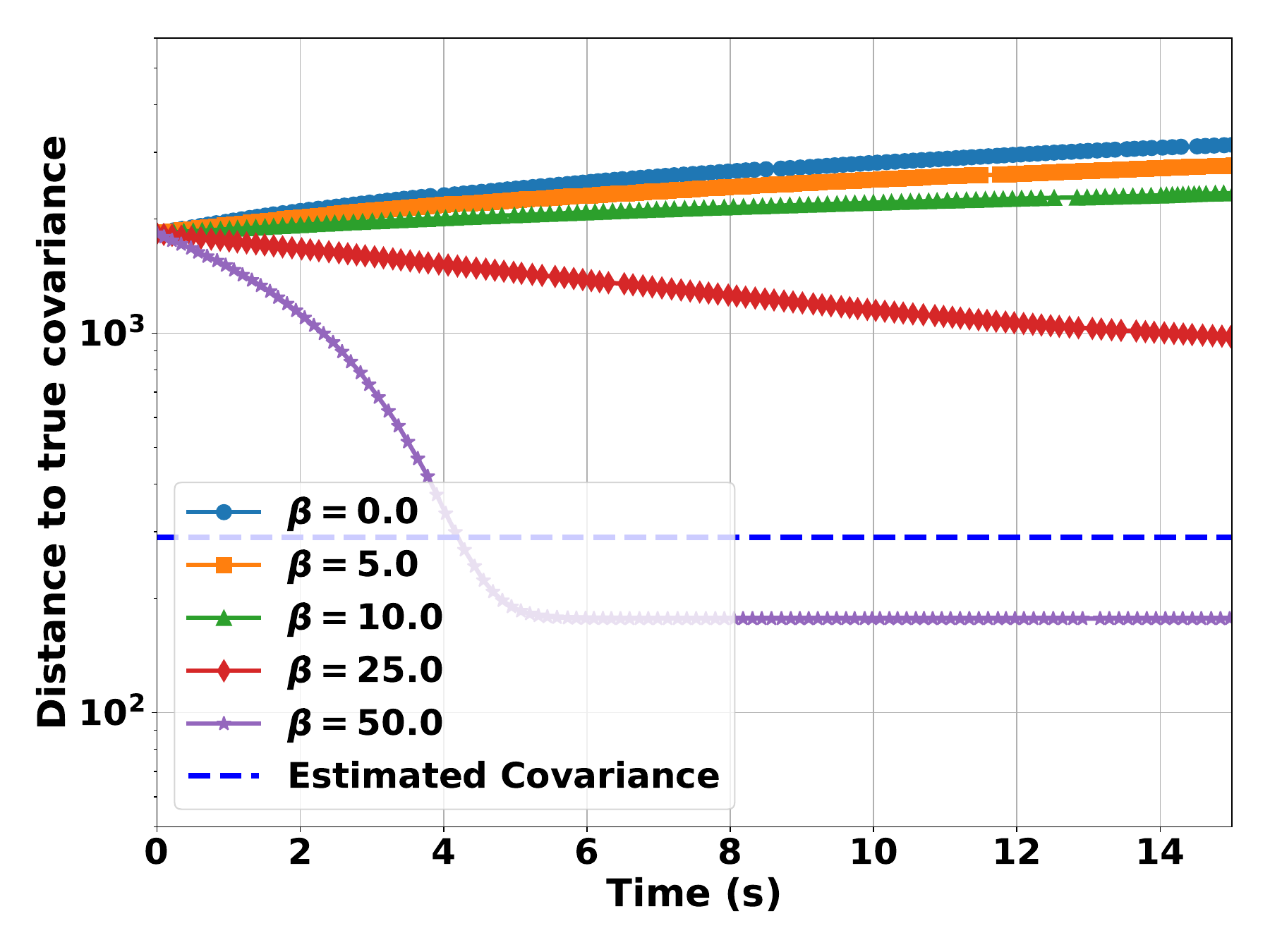}
    \caption{\textbf{Multivariate T-Distribution Optimistic Likelihood.} We sampled $n=1200$ i.i.d. multivariate t vectors of dimension $d=15$. The plot indicates similar behaviour of that to the Gaussian optimistic likelihood problem: higher regularization encourages solution to be nearer $\hat{\Sigma}$ and also exhibits faster convergence.}
    \label{fig:multivar_t_aux_figs}
\end{figure}

\begin{figure}[ht]
    \centering
    \includegraphics[width=0.43\linewidth]{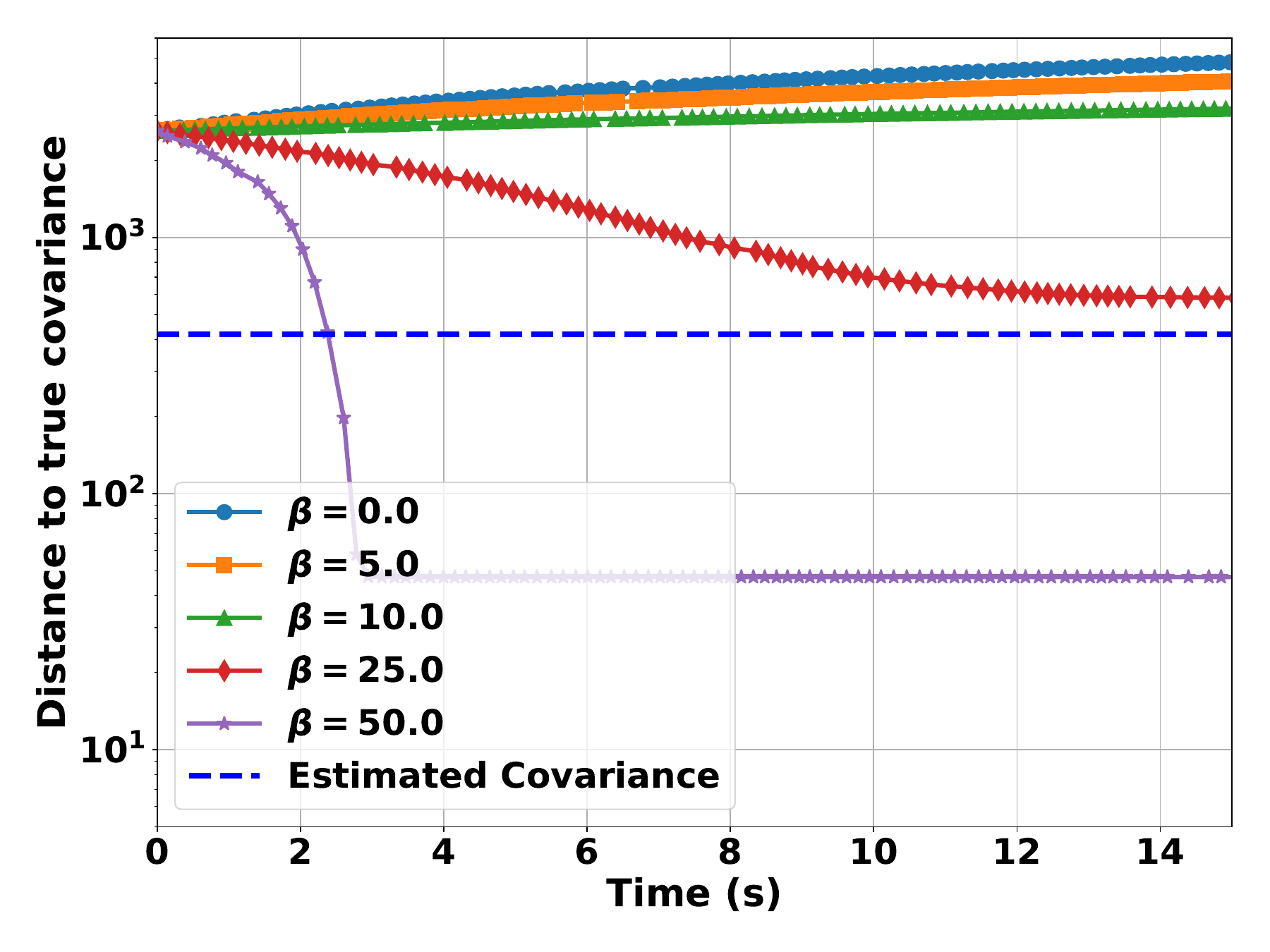}
    \includegraphics[width=0.43\linewidth]{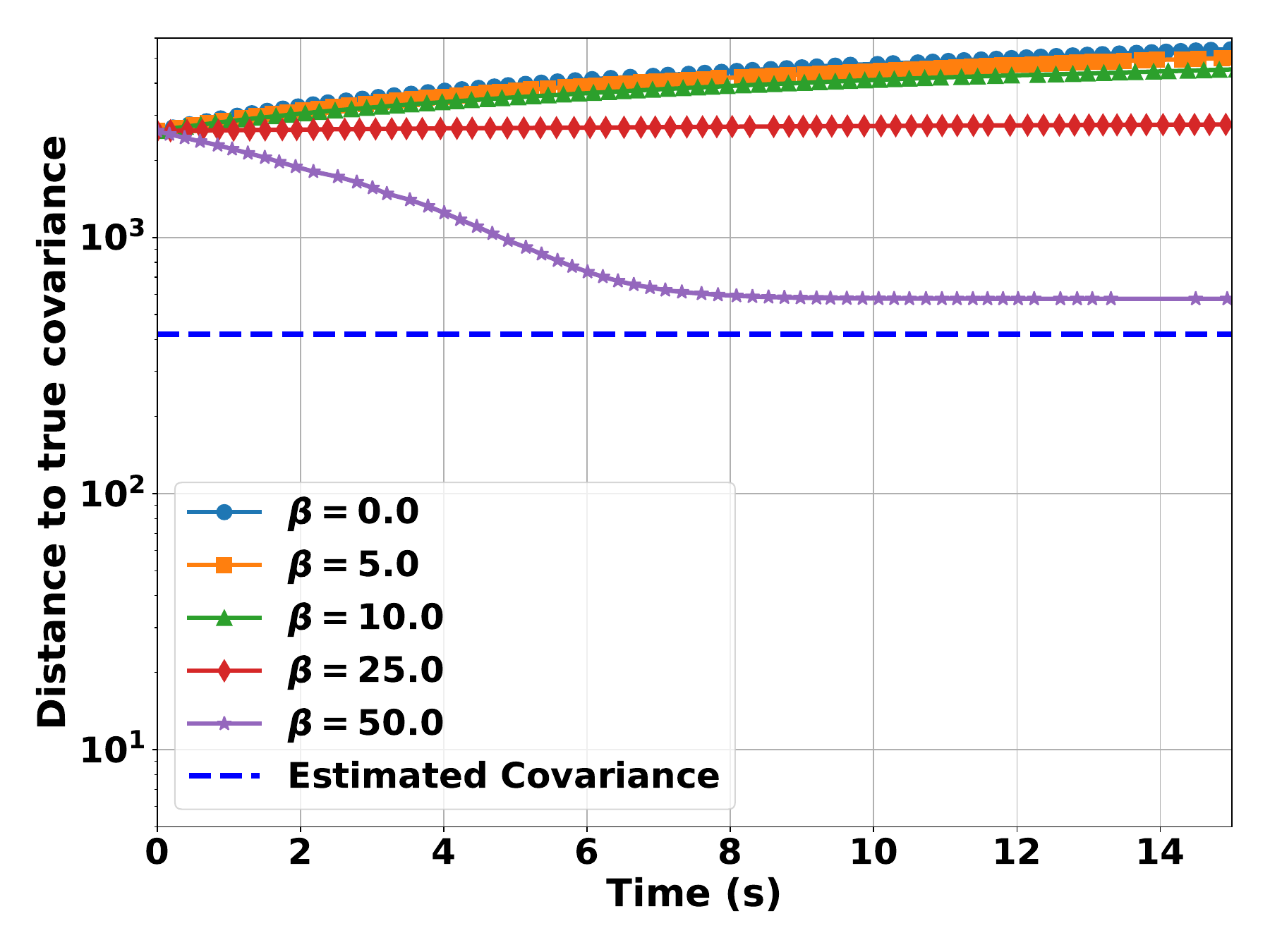}
    \caption{\textbf{Multivariate T-Distribution Optimistic Likelihood.} Both plots are $d=30$ and $\nu = 5$. The left plot is generated with $n=1500$ and the right with $n=3000$. This method requires a high number of samples for it to correspond with the theory. In particular, the right-side plot agreees with the theory: higher regularization corresponds to closer solutions to $\hat{\Sigma}$ in a monotonic fashion. The left-hand side with insufficient data points violates this.}
    \label{fig:multivar_t_aux_high_samples}
\end{figure}

\section{Discussion}
In this paper we introduced structured regularization approaches for constrained optimization on the SPD manifold. We considered different classes of constraints with a particular focus on sparsity and ball constraints. Our structured regularization approach relies on symmetric gauge functions, whose algebraic properties give rise to a modular framework that allows for designing regularizers that preserve desirable properties of the orginial objective, specifically geodesic convexity and difference of convex structure. We illustrate the utility of our approach on a range of data science and machine learning applications.

We believe that our proposed approach opens up new directions for constrained optimization on Riemannian manifolds that circumvents the potentially costly subroutines of standard constrained Riemannian optimization approaches, such as R-PGD and R-FW. While we focus on two specific classes of constraints for most of the paper, we believe that our approach could be applied much more broadly. Our discussion on disciplined programming with symmetric gauge functions may serve as a starting point for future work in this direction. The introduction of new regularizers for structured constraints could significantly widen the range of applications in machine learning and data science. Furthermore, while this paper only discusses constrained optimization on the SPD manifold, we believe that many of the ideas could be extended to other Cartan-Hadamard manifolds.

\section*{Acknowledgements}
We thank Bobak Kiani and Vaibhav Dixit for helpful discussions and comments.\\ 

\noindent This work was supported by the Harvard Dean’s Competitive Fund for Promising Scholarship and NSF award 2112085. AC is partially supported by a NSERC Postgraduate Fellowship.

\newpage
\bibliographystyle{plainnat}
\bibliography{refs}


\end{document}